\tikzset{
modal/.style={>=stealth',shorten >=1pt,shorten <=1pt,auto,
node distance=1.5cm,semithick},
world/.style={circle,draw,minimum size=1cm},
point/.style={circle,draw,fill=black,inner sep=0.5mm},
reflexive/.style={->,in=120,out=60,loop,looseness=#1},
reflexive/.default={5},
reflexive point/.style={->,in=135,out=45,loop,looseness=#1},
reflexive point/.default={25},
}
\newcommand{\weakrightarrow}{\rightarrowtriangle}
\newcommand{\BD}{\mathsf{BD}}
\newcommand{\Luktriangle}{\Luk_\triangle}
\newcommand{\HLuktriangle}{\mathcal{H}\Luk_\triangle}
\newcommand{\fourLukProb}{\four\Prob^{\Luk_\triangle}}
\newcommand{\HfourLukProb}{\mathcal{H}\fourLukProb}
\newcommand{\LfourLukProb}{\mathcal{L}_{\fourLukProb}}
\newcommand{\LBD}{\mathcal{L}_\mathsf{BD}}
\newcommand{\Luk}{{\mathchoice{\mbox{\sf\L}}{\mbox{\sf\L}}{\mbox{\sf\scriptsize\L}}{\mbox{\sf\tiny\L}}}}
\newcommand{\LLuk}{\mathcal{L}_{\Luk}}
\newcommand{\LLukProbsquare}{\mathcal{L}_{\Prob^{\Luk^2}_\triangle}}
\newcommand{\LukProbsquare}{\Prob^{\Luk^2}_\triangle}
\newcommand{\Prop}{\mathtt{Prop}}
\newcommand{\Luksquareorder}{\Luk^2_{(\triangle,\rightarrow)}}
\newcommand{\LLuksquareorder}{\mathcal{L}_{\Luk^2_{(\triangle,\rightarrow)}}}
\newcommand{\Ninvol}{{\sim_\mathsf{N}}}
\newcommand{\np}{\mathsf{NP}}
\newcommand{\conp}{\mathsf{coNP}}
\newcommand{\four}{\mathbf{4}}
\newcommand{\HLukProbsquare}{\mathcal{H}\LukProbsquare}
\newcommand{\true}{\mathbf{T}}
\newcommand{\both}{\mathbf{B}}
\newcommand{\neither}{\mathbf{N}}
\newcommand{\false}{\mathbf{F}}
\newcommand{\bel}{\mathtt{bel}}
\newcommand{\purebel}{\mathsf{b}}
\newcommand{\puredisbel}{\mathsf{d}}
\newcommand{\conf}{\mathsf{c}}
\newcommand{\uncert}{\mathsf{u}}
\newcommand{\purebelmod}{\mathsf{Bl}}
\newcommand{\puredisbelmod}{\mathsf{Db}}
\newcommand{\conflmod}{\mathsf{Cf}}
\newcommand{\uncertmod}{\mathsf{Uc}}
\newcommand{\HLuksquareorder}{\mathcal{H}\Luksquareorder}
\newcommand{\TLuksquareorder}{\mathcal{T}\!\!\left(\Luksquareorder\right)}
\newcommand{\NLuk}{\mathsf{N}\Luk}
\newcommand{\TNLuk}{\mathcal{T}\!\!\left(\NLuk\right)}
\newcommand{\LNLuk}{\mathcal{L}_{\NLuk}}
\newcommand{\Prob}{\mathsf{Pr}}
\newcommand{\Bel}{\mathsf{B}}
\newcommand{\Pl}{\mathsf{Pl}}
\newcommand{\Lit}{\mathsf{Lit}}
\newcommand{\Sf}{\mathtt{Sf}}
\newcommand{\BelLuksquareorder}{\mathsf{Bel}^{\Luk^2}_\triangle}
\newcommand{\LBelLuksquareorder}{\mathcal{L}_{\BelLuksquareorder}}
\newcommand{\BelLuksquareNelson}{\mathsf{Bel}^{\NLuk}}
\newcommand{\LBelLuksquareNelson}{\mathcal{L}_{\BelLuksquareNelson}}
\newcommand{\NNF}{\mathsf{NNF}}
\newcommand{\cl}{\mathsf{cl}}
\newcommand{\canonicalBDequiv}{\mathfrak{M}^\mathsf{\BD\mathbf{{S5}}}}
\newcommand{\ProbLukordermodal}{\Prob^{(\triangle,\rightarrow)}_{\mathbf{S5}}}
\newcommand{\ProbLukNelsonmodal}{\Prob^{\NLuk}_{\mathbf{S5}}}
\newcommand{\LProbLukordermodal}{\mathcal{L}_{\ProbLukordermodal}}
\newcommand{\LProbLukNelsonmodal}{\mathcal{L}_{\ProbLukNelsonmodal}}
\newcommand{\pl}{\mathtt{pl}}
\newcommand{\Cmsf}{{\mathsf{C}}}
\newcommand{\Smsf}{{\mathsf{S}}}
\newcommand{\smsf}{\mathsf{s}}
\newcommand{\Bmc}{\mathcal{B}}
\newcommand{\Omc}{{\mathcal{O}}}
\newcommand{\Zmc}{{\mathcal{Z}}}
\newcommand{\lmc}{{\mathcal{l}}}
\newcommand{\tmc}{{\mathcal{t}}}
\begin{document}
\allowdisplaybreaks
\lefttitle{B\'ilkov\'a, Frittella, Kozhemiachenko, and Majer}
\righttitle{Two-layered logics for probabilities and belief functions over Belnap--Dunn logic}
\papertitle{Article}
\jnlPage{1}{00}
\jnlDoiYr{2019}
\doival{10.1017/xxxxx}
\title{Two-layered logics for probabilities and belief functions over Belnap--Dunn logic}
\author{Marta B\'ilkov\'a}
\affiliation{Institute of Computer Science, The Czech Academy of Sciences, Prague\\
\email{bilkova@cs.cas.cz}}
\author{Sabine Frittella}
\affiliation{INSA CVL, Universit\'{e} d'Orl\'{e}ans, LIFO, UR 4022, Bourges, France\\
\email{sabine.frittella@insa-cvl.fr}}
\author{Daniil Kozhemiachenko}
\affiliation{Univ.\ Bordeaux, CNRS, Bordeaux INP, LaBRI, UMR 5800\\\email{daniil.kozhemiachenko@u-bordeaux.fr}}
\author{ Ondrej Majer}
\affiliation{Institute of Philosophy, The Czech Academy of Sciences, Prague\\\email{majer@flu.cas.cz}}

\history{(Received xx xxx xxx; revised xx xxx xxx; accepted xx xxx xxx)}

\begin{abstract}
This paper is an extended version of~\cite{BilkovaFrittellaKozhemiachenkoMajer2023WoLLIC}. We discuss two-layered logics formalising reasoning with probabilities and belief functions that combine the Łukasiewicz $[0,1]$-valued logic with Baaz $\triangle$ operator and the Belnap--Dunn logic.

We consider two probabilistic logics --- $\LukProbsquare$ (introduced by~\cite{BilkovaFrittellaKozhemiachenkoMajerNazari2023APAL}) and $\fourLukProb$ (from~\cite{BilkovaFrittellaKozhemiachenkoMajer2023WoLLIC}) --- that present two perspectives on the probabilities in the Belnap--Dunn logic. In $\LukProbsquare$, every event $\phi$ has independent positive and negative measures that denote the likelihoods of $\phi$~and $\neg\phi$, respectively. In $\fourLukProb$, the measures of the events are treated as partitions of the sample into four exhaustive and mutually exclusive parts corresponding to pure belief, pure disbelief, conflict and uncertainty of an agent in $\phi$.

In addition to that, we discuss two logics for the paraconsistent reasoning with belief and plausibility functions from~\cite{BilkovaFrittellaKozhemiachenkoMajerNazari2023APAL} --- $\BelLuksquareorder$ and $\BelLuksquareNelson$. Both these logics equip events with two measures (positive and negative) with their main difference being that in $\BelLuksquareorder$, the negative measure of $\phi$ is defined as the \emph{belief in $\neg\phi$} while in $\BelLuksquareNelson$, it is treated independently as \emph{the plausibility of $\neg\phi$}.

We provide a sound and complete Hilbert-style axiomatisation of $\fourLukProb$ and establish faithful translations between it and $\LukProbsquare$. We also show that the validity problem in all the logics is $\conp$-complete.
\end{abstract}

\begin{keywords}
two-layered logics; Łukasiewicz logic; non-standard probabilities; non-standard belief functions; paraconsistent logics
\end{keywords}

\maketitle
\section{Introduction\label{sec:introduction}}
Classical probability and Dempster--Shafer theories study probability measures and belief functions. These are maps from the set of events of a sample space $W$ (i.e., from $2^W$) to $[0,1]$ that are monotone w.r.t.\ $\subseteq$ with additional conditions. Probability measures satisfy the (finite or countable) additivity condition
\begin{align*}
\mu\left(\bigcup\limits_{i\in I}E_i\right)&=\sum\limits_{i\in I}\mu(E_i)\tag{$I\subseteq2^W,\forall i,j\in I:i\neq j\Rightarrow E_i\cap E_j=\varnothing$}
\end{align*}
and belief functions its weaker version (\emph{total monotonicity} in the terminology of~\cite{Zhou2013})
\begin{align*}
\bel(W)&=1&\bel\left(\bigcup\limits_{1\leq i\leq k}E_i\right)&\geq\sum\limits_{\scriptsize{\begin{matrix}J\subseteq\{1,\ldots,k\}\\J\neq\varnothing\end{matrix}}}(-1)^{|J|+1}\cdot\bel\left(\bigcap\limits_{j\in J}E_j\right)&
\bel(\varnothing)&=0
\end{align*}

Above, the disjointness of $E_i$ and $E_j$ can be understood as their incompatibility. Most importantly, if a~propositional formula $\phi$ is associated with an event $\|\phi\|$ (and interpreted as a statement about it), then
\[\mu(\|\phi\wedge\neg\phi\|)=\bel(\|\phi\wedge\neg\phi\|)=0\]
(since $\|\phi\|$ and $\|\neg\phi\|$ are incompatible) and $\|\phi\vee\neg\phi\|$ exhausts the entire sample space, whence
\[\mu(\|\phi\vee\neg\phi\|)=\bel(\|\phi\vee\neg\phi\|)=1\]

\emph{Paraconsistent} uncertainty theory, on the other hand, assumes that the measure of an event represents not the likelihood of it happening but an agent's certainty therein which they infer from the information given by the sources. In this respect, it is close to the classical Dempster--Shafer theory that can also be considered as a generalisation of the subjective probability theory.

The main difference between the classical and paraconsistent approaches is the treatment of negation. Dempster--Shafer theory usually deals with contradictions \emph{between different sources}. However, it is reasonable to assume that even a~\emph{single source} can give contradictory information (or give no information at all): for instance, a~witness in court can provide a~contradictory testimony; likewise, a~newspaper article can fail to mention some event at all, without explicitly denying or confirming it. Thus, a ‘contradictory’ event $\|\phi\wedge\neg\phi\|$ can have a positive probability or belief assignment and $\|\phi\vee\neg\phi\|$ does not necessarily exhaust the sample space. Thus, a~logic describing events should allow them to be both true and false (if the source gives contradictory information) or neither true nor false (when the source does not give information). Formally, this means that $\neg$ does not correspond to the complement in the sample space.
\subsection{Belnap--Dunn logic\label{ssec:BD}}
As one can see from the previous paragraph, we need a very special kind of negation: the one that allows for true contradictions (and thus, rejects the principle of explosion), and, additionally, invalidates the law of excluded middle. Thus, the following principles are no longer valid
\begin{align*}
\mathsf{EFQ}:p\wedge\neg p&\models q&\mathsf{LEM}:p\models q\vee\neg q
\end{align*}
The logics that lack $\mathsf{EFQ}$ are called \emph{paraconsistent}, those that do not have $\mathsf{LEM}$ are \emph{paracomplete}, and those that fail both principles are \emph{paradefinite} or \emph{paranormal} (cf., e.g.,~\cite{ArieliAvron2017} for the terminology).

The simplest paradefinite logic to represent reasoning about information provided by sources is the Belnap--Dunn logic ($\BD$) from~\cite{Dunn1976} and~\cite{Belnap1977fourvalued,Belnap2019}. Originally, $\BD$ was presented as a four-valued propositional logic in the $\{\neg,\wedge,\vee\}$ language. The values (which we will henceforth call \emph{Belnapian values}) represent different accounts a source can give regarding a~statement~$\phi$:
\begin{itemize}
\item $\true$ stands for ‘the source only says that $\phi$ is true’;
\item $\false$ stands for ‘the source only says that $\phi$ is false’;
\item $\both$ stands for ‘the source says both that $\phi$ is false and that $\phi$ is true’;
\item $\neither$ stands for ‘the source does not say that $\phi$ is false nor that it is true’.
\end{itemize}
The interpretation of the truth values allows for reformulating $\BD$ semantics in terms of \emph{two classical but independent valuations}. Namely,
\begin{center}
\begin{tabular}{c|c|c}
&\textbf{is true when}&\textbf{is false when}\\\hline
$\neg\phi$&$\phi$ is false&$\phi$ is true\\
$\phi_1\wedge\phi_2$&$\phi_1$ and $\phi_2$ are true&$\phi_1$ is false or $\phi_2$ is false\\
$\phi_1\vee\phi_2$&$\phi_1$ is true or $\phi_2$ is true&$\phi_1$ and $\phi_2$ are false
\end{tabular}
\end{center}
It is easy to see that there are no universally true nor universally false formulas in $\BD$. Thus, $\BD$ satisfies the desiderata outlined above. Moreover, even if we define $\phi\supset\chi$ as $\neg\phi\vee\chi$, it is clear that neither Modus Ponens, nor the Deduction theorem will hold for $\supset$. I.e., $\BD$ lacks the (definable) implication (cf.~\cite[\S5.1]{OmoriWansing2017} for a detailed discussion of the implication in $\BD$). This, however, is not problematic since we are going to use $\BD$ only to represent events and conditional statements which are usually formalised with an implication do not correspond to descriptions of events.
\subsection{Probabilities and belief functions in $\BD$}
The original interpretation of the Belnapian truth values that we gave above is presented in terms of the information one has. In this approach, however, the information is assumed to be crisp. Theories of uncertainty over $\BD$ were introduced to formalise situations where one has access to graded information. For instance, the first source could tell that $p$ is true with the likelihood $0.4$ and the second that $p$ is false with probability $0.7$. If one follows $\BD$ and treats positive and negative evidence independently, one needs a non-classical notion of uncertainty measures to represent this information.

To the best of our knowledge, the earliest formalisation of probability theory in terms of $\BD$ was provided by~\cite{Mares1997}. The formalisation is very similar to the one that we are going to use in this paper but bears one significant distinction. Namely, \emph{normalised} measures (i.e., those where $\mu(W)=1$ and $\mu(\varnothing)=0$) are used by~\cite{Mares1997}. This requirement, however, is superfluous since $\BD$ lacks universally (in)valid formulas.

Another formalisation is given by~\cite{Dunn2010}. Dunn proposes to divide the sample space into four exhaustive and mutually exclusive parts depending on the Belnapian value of $\phi$. An alternative approach was proposed by~\cite{KleinMajerRad2021}. There, the authors propose two equivalent interpretations based on the two formulations of semantics. The first option (which we call here \emph{$\pm$-probability}) is to give $\phi$ two \emph{independent probability measures}: the one determining the likelihood of $\phi$ to be true and the other the likelihood of $\phi$ to be false. The second option (\emph{$\four$-probabilities}) follows Dunn and divides the sample space according to whether $\phi$ has value $\true$, $\both$, $\neither$, or $\false$ in a given state. Note that in both cases, the probabilities are interpreted \emph{subjectively}.

The main difference between the approaches of~\cite{Dunn2010} and~\cite{KleinMajerRad2021} is that in the former, the probability of $\phi\wedge\phi'$ is entirely determined by those of $\phi$ and $\phi'$ which makes it compositional. On the other hand, the paraconsistent probabilities proposed by~\cite{KleinMajerRad2021} are not compositional w.r.t.\ conjunction. In this paper, we choose the latter approach since it can be argued (cf.~\cite{Dubois2008} for the details) that belief is not compositional when it comes to contradictory information.

A similar approach to paraconsistent probabilities was proposed by, e.g.,~\cite{Bueno-SolerCarnielli2016} and~\cite{RodriguesBueno-SolerCarnielli2021}. There, probabilities are defined over an extension of $\BD$ with classicality and non-classicality operators. It is worth mentioning that the proposed axioms of probability are very close to those from~\cite{KleinMajerRad2021}: e.g., both allow measures $\pi$ s.t.\ $\pi(\phi)+\pi(\neg\phi)<1$ (if the information regarding $\phi$ is incomplete) or $\pi(\phi)+\pi(\neg\phi)>1$ (when the information is contradictory).

Belief functions over $\BD$ were first defined by~\cite{Zhou2013}. There, they were presented on the ordered sets of states. Each formula $\phi$ in this approach corresponds to \emph{two} sets of states: $|\phi|^+$ (states where $\phi$ has value $\true$ or $\both$) and $|\phi|^-$ (states where it is evaluated at $\both$ or $\false$). Moreover, a logic formalising reasoning with belief functions was presented. A~similar treatment of (non-normalised or \emph{general} in the terminology of the paper) belief functions in $\BD$ was given by~\cite{BilkovaFrittellaKozhemiachenkoMajerNazari2023APAL}. The main difference between the two treatments of belief functions was that \cite{BilkovaFrittellaKozhemiachenkoMajerNazari2023APAL} considered two options for interpreting $\bel(|\phi|^-)$: the first one was to treat it as the belief of $\neg\phi$, and the second one --- as \emph{the plausibility} of $\neg\phi$. Another distinction was in the formalisation: \cite{Zhou2013} constructs a logic for reasoning about belief functions following~\cite{FaginHalpernMegiddo1990}: i.e., incorporating the arithmetical operations and inequalities containing them into the language. \cite{BilkovaFrittellaKozhemiachenkoMajerNazari2023APAL} utilised a different approach: instead of using arithmetic inequalities, the reasoning about belief functions is conducted in a~paraconsistent expansion of the Łukasiewicz logic $\Luk$ capable of expressing arithmetic operations on~$[0,1]$.
\subsection{Two-layered logics for uncertainty\label{ssec:2layereduncertainty}}
Reasoning about uncertainty can be formalised via modal logics where the modality is understood as a measure of an event. The concrete semantics of the modality can be defined in two ways. First, using a modal language with Kripke semantics where the measure is defined on the set of states as done by, e.g.,~\cite{Gardenfors1975,DelgrandeRenne2015,DelgrandeRenneSack2019} for qualitative probabilities, by~\cite{DautovicDoderOgnjanovic2021} for the quantitative ones, and by~\cite{RodriguezTuytEstevaGodo2022} for the possibility and necessity measures. Second, employing a two-layered formalism (cf.~\cite{FaginHalpernMegiddo1990,FaginHalpern1991ComputationalIntelligence}, \cite{HajekGodoEsteva1995}, \cite{BaldiCintulaNoguera2020}, and~\cite{BilkovaFrittellaKozhemiachenkoMajerNazari2023APAL,BilkovaFrittellaKozhemiachenkoMajer2023IJAR,BilkovaFrittellaKozhemiachenkoMajer2023WoLLIC} for examples). There, the logic is split into two levels: the inner layer describes events, and the outer layer describes the reasoning with the measure defined on events. The measure is a \emph{non-nesting} modality $\mathtt{M}$, and the outer-layer formulas are built from \emph{‘modal atoms’} --- formulas of the form $\mathtt{M}\phi$ with $\phi$ being an inner-layer formula. The outer-layer formulas are then equipped with the semantics of a~fuzzy logic that permits necessary operations (e.g., Łukasiewicz for the quantitative reasoning and G\"{o}del for the qualitative).

An alternative to the two-layered logics is to use the language of linear inequalities to reason about measures of events. This is done by~\cite{FaginHalpernMegiddo1990} for the classical reasoning about probabilities and by~\cite{Zhou2013} for the reasoning with the belief functions over $\BD$. In both cases, it is established that the logics with inequalities and the two-layered logics are equivalent~--- cf.~\cite{BaldiCintulaNoguera2020} for the case of classical probabilities and~\cite{BilkovaFrittellaKozhemiachenkoMajerNazari2023APAL} for the reasoning with belief functions and probabilities over $\BD$.

In this paper, we study reasoning about uncertainty in a~paraconsistent framework. For this, we choose two-layered logics. First, they are more modular than the usual Kripke semantics: as long as the logic of the event description is chosen, we can define different measures on top of it using different outer-layer logics. Second, two-layered logics provide a~uniform way to prove completeness. This is done by translating the axioms of a~given measure into formulas of the outer-layer logic (cf.~\cite{CintulaNoguera2014} for more details). Third, the techniques that are used to establish the decidability of the outer-layer logic can be applied to the decidability proof of the two-layered logic. Finally, even though the traditional Kripke semantics is more expressive than two-layered logics, this expressivity is not necessary in many contexts. Indeed, people rarely say something like ‘it is probable that it is probable that $\phi$’. Moreover, it is considerably more difficult to motivate the assignment of truth values in the nesting case, in particular, when the same measure is applied both to a~propositional and modalised formula as in, e.g., $\mathtt{M}(p\wedge\mathtt{M}q)$.

We will also be formalising the \emph{quantitative} reasoning. I.e., we assume that the agents can assign numerical values to their certainty in a given proposition or say something like ‘I think that rain is twice more likely than snow’. Thus, we need a logic that can express the paraconsistent counterparts of the additivity condition as well as basic arithmetic operations. We choose the Łukasiewicz logic ($\Luk$) for the outer layer since it can define (truncated) addition and subtraction on $[0,1]$.

We will focus on four logics. Two probabilistic ones: $\LukProbsquare$ (the logic of $\pm$-probabilities), $\fourLukProb$ (the logic of $\four$-probabilities); and two logics for belief and plausibility functions over $\BD$ introduced by~\cite{BilkovaFrittellaKozhemiachenkoMajerNazari2023APAL} --- $\BelLuksquareorder$ where belief and plausibility are defined via one another, and $\BelLuksquareNelson$ where belief and plausibility are assumed to be independent.
\subsection{Contributions and plan of the paper\label{ssec:plan}}
This paper extends an earlier conference submission by~\cite{BilkovaFrittellaKozhemiachenkoMajer2023WoLLIC}. We provide omitted details for the proofs of $\conp$-completeness of $\LukProbsquare$ and $\fourLukProb$ and of the finite strong completeness proof of $\HfourLukProb$ --- the Hilbert-style axiomatisation of $\fourLukProb$. The novel contribution of this manuscript is the proof of the $\conp$-completeness of $\BelLuksquareorder$ and $\BelLuksquareNelson$. We obtain this result by establishing a~correspondence between these logics and $\ProbLukordermodal$ and $\ProbLukNelsonmodal$ --- two logics for reasoning about probabilities of \emph{modal} $\BD$ formulas that we introduce in the present paper. To apply the technique from~\cite{HajekTulipani2001}, we establish a~version of the small model property for the canonical models of $\ProbLukordermodal$ and $\ProbLukNelsonmodal$. Thus, we continue the study of uncertainty via paraconsistent logics proposed by~\cite{BilkovaFrittellaMajerNazari2020} and carried on by~\cite{BilkovaFrittellaKozhemiachenkoMajerNazari2023APAL}, \cite{BilkovaFrittellaKozhemiachenkoMajer2023IJAR}, and~\cite{BilkovaFrittellaKozhemiachenkoMajerManoorkar2023}. The overarching goal of the project is to study logics that can express the following properties of beliefs.
\begin{enumerate}
\item Given two statements $\phi$ and $\chi$, one can be more certain in $\phi$ than in $\chi$ but still, neither believe in $\phi$ \emph{completely} nor consider $\chi$ completely impossible.
\item Given two trusted sources, one can still prefer one source to the other.
\item One can believe in a~contradiction but still not believe in something else.
\item Given two statements, it is possible that one cannot always compare their degrees of certainty in them (if, e.g., these statements have no common content).
\end{enumerate}

The remainder of the text is structured as follows. In Section~\ref{sec:BDprobabilities}, we recall two approaches to probabilities over $\BD$ from~\cite{KleinMajerRad2021} --- $\pm$-probabilities and $\four$-probabilities. In Section~\ref{sec:twolayeredprobabilities}, we provide the semantics and axiomatisations of $\LukProbsquare$ and $\fourLukProb$ --- two-layered logics for probabilities and establish their $\np$-completeness. In Section~\ref{sec:BDbelief}, we recall two treatments of belief functions over $\BD$ that were presented by~\cite{BilkovaFrittellaKozhemiachenkoMajerNazari2023APAL}. In Section~\ref{sec:twolayeredbelief}, we discuss the two-layered logics for belief functions ($\BelLuksquareorder$ and $\BelLuksquareNelson$), establish their connections with modal probabilistic logics $\ProbLukordermodal$ and $\ProbLukNelsonmodal$, and provide a~complexity evaluation of their validity problems. Finally, we summarise our results and set goals for future research in Section~\ref{sec:conclusion}.
\section{Probabilities over $\BD$\label{sec:BDprobabilities}}
In the previous section, we gave an informal presentation of $\BD$ as a four-valued logic. Since we are going to use it to describe events, we will formulate its semantics in terms of sets of states. The language of $\BD$ is given by the following grammar (with $\Prop$ being a countable set of propositional variables).
\[\LBD\ni\phi\coloneqq p\in\Prop\mid\neg\phi\mid(\phi\wedge\phi)\mid(\phi\vee\phi)\]
\begin{convention}[Notation]
In what follows, $\Prop(\phi)$ denotes the set of variables occurring in $\phi$ and $\Lit(\phi)$ stands for the set of \emph{literals} (i.e., variables or their negations) occurring in $\phi$. Moreover, $\Sf(\phi)$ is the set of all subformulas of $\phi$. The \emph{length} (i.e., the number of symbols) of $\phi$ is denoted with $\lmc(\phi)$.

We are also going to use two kinds of formulas: the single- and the two-layered ones. To make the differentiation between them simpler, we use Greek letters from the end of the alphabet ($\phi$, $\chi$, $\psi$, etc.) to designate the first kind and the letters from the beginning of the alphabet ($\alpha$, $\beta$, $\gamma$, \ldots) for the second kind. We use $v$ (with indices) to stand for the valuations of single-layered formulas and $e$ (with indices) for the two-layered formulas.

Finally, we will use angular brackets $\langle\ldots\rangle$ for tuples that designate \emph{models} of logics and \emph{pairs of valuations} and round brackets $(\ldots)$ for \emph{pairs of numbers} that are values of formulas. 
\end{convention}
\begin{definition}[Set semantics of $\BD$]\label{def:BDframesemantics}
Let $\phi,\phi'\in\LBD$, $W\neq\varnothing$, and $v^+,v^-:\Prop\to2^W$. For a~model $\mathfrak{M}=\langle W,v^+,v^-\rangle$, we define notions of $w\vDash^+\phi$ and $w\vDash^-\phi$ for $w\in W$ as follows.
\begin{align*}
w\vDash^+p&\text{ iff }w\in v^+(p)&w\vDash^-p&\text{ iff }w\in v^-(p)\\
w\vDash^+\neg\phi&\text{ iff }w\vDash^-\phi&w\vDash^-\neg\phi&\text{ iff }w\vDash^+\phi\\
w\vDash^+\phi\wedge\phi'&\text{ iff }w\vDash^+\phi\text{ and }w\vDash^+\phi'&w\vDash^-\phi\wedge\phi'&\text{ iff }w\vDash^-\phi\text{ or }w\vDash^-\phi'\\
w\vDash^+\phi\vee\phi'&\text{ iff }w\vDash^+\phi\text{ or }w\vDash^+\phi'&w\vDash^-\phi\vee\phi'&\text{ iff }w\vDash^-\phi\text{ and }w\vDash^-\phi'
\end{align*}
Given a~model $\mathfrak{M}$, we denote the positive and negative extensions of a~formula as follows:
\begin{align*}
|\phi|^+_\mathfrak{M}&\coloneqq\{w\in W\mid w\vDash^+\phi\}&|\phi|^-_\mathfrak{M}&\coloneqq\{w\in W\mid w\vDash^-\phi\}.
\end{align*}
A~sequent $\phi\vdash\chi$ is \emph{satisfied on $\mathfrak{M}=\langle W,v^+,v^-\rangle$} (denoted, $\mathfrak{M}\models[\phi\vdash\chi]$) iff $|\phi|^+\subseteq|\chi|^+$. A~sequent $\phi\vdash\chi$ is \emph{$\BD$-valid} ($\phi\models_\BD\chi$) iff it is satisfied on every model. In this case, we will say that $\phi$ \emph{entails}~$\chi$ \emph{in $\BD$}.
\end{definition}
\begin{remark}\label{rem:NNF}
One can see that every $\phi\in\LBD$ can be turned into its \emph{negation normal form} $\NNF(\phi)$ where $\neg$ is applied to variables only. This can be done via the following transformations:
\begin{align*}
\neg\neg p&\rightsquigarrow p&\neg(\chi\wedge\psi)&\rightsquigarrow\neg\chi\vee\neg\psi&\neg(\chi\vee\psi)&\rightsquigarrow\neg\chi\wedge\neg\psi
\end{align*}
Moreover, it is clear that $|\phi|^+_\mathfrak{M}=|\NNF(\phi)|^+_\mathfrak{M}$ and $|\phi|^-_\mathfrak{M}=|\NNF(\phi)|^-_\mathfrak{M}$ in every model $\mathfrak{M}$ and that $\lmc(\NNF(\phi))=\Omc(\lmc(\phi))$.
\end{remark}

Note that the semantics in~Definition~\ref{def:BDframesemantics} is a~formalisation of the truth and falsity conditions of $\{\neg,\wedge,\vee\}$-formulas we saw in the table in Section~\ref{ssec:BD}. We can now use it to define probabilities on the models. We adapt the definitions from~\cite{KleinMajerRad2021}.
\begin{definition}[$\BD$-models with $\pm$-probabilities]\label{def:measuredmodel}
A \emph{$\BD$-model with a $\pm$-probability} is a tuple $\mathfrak{M}_\pm=\langle\mathfrak{M},\mu\rangle$ with $\mathfrak{M}$ being a $\BD$-model and $\mu:2^W\rightarrow[0,1]$ satisfying:
\begin{description}
\item[$\mathbf{mon}$:] if $X\subseteq Y$, then $\mu(X)\leq\mu(Y)$;
\item[$\mathbf{neg}$:] $\mu(|\phi|^-_\mathfrak{M})=\mu(|\neg\phi|^+_\mathfrak{M})$;
\item[$\mathbf{ex}$:] $\mu(|\phi\vee\chi|^+_\mathfrak{M})=\mu(|\phi|^+_\mathfrak{M})+\mu(|\chi|^+_\mathfrak{M})-\mu(|\phi\wedge\chi|^+_\mathfrak{M})$.
\end{description}
\end{definition}
To facilitate the presentation of the four-valued probabilities defined over $\BD$-models, we introduce additional extensions of $\phi$ defined via $|\phi|^+$ and $|\phi|^-$.
\begin{convention}\label{conv:4measures}
Let $\mathfrak{M}=\langle W,v^+,v^-\rangle$ be a $\BD$-model, $\phi\in\LBD$. We set
\begin{align*}
|\phi|^\purebel_\mathfrak{M}=&|\phi|^+_\mathfrak{M}\setminus|\phi|^-_\mathfrak{M}& |\phi|^\puredisbel_\mathfrak{M}=&|\phi|^-_\mathfrak{M}\setminus|\phi|^+_\mathfrak{M}&
|\phi|^\conf=&|\phi|^+\cap|\phi|^-_\mathfrak{M}& |\phi|^\uncert_\mathfrak{M}=&W\setminus(|\phi|^+_\mathfrak{M}\cup|\phi|^-_\mathfrak{M})
\end{align*}
We call these extensions, respectively, \emph{pure belief}, \emph{pure disbelief}, \emph{conflict}, and \emph{uncertainty in $\phi$}, following~\cite{KleinMajerRad2021}.
\end{convention}
One can observe that these extensions correspond to the Belnapian values of $\phi$ (recall Section~\ref{ssec:BD}):
\begin{itemize}
\item \emph{pure belief extension} of $\phi$ is the set of states where $\phi$ has value $\true$ (i.e., \emph{exactly true}, in other words, true and not false);
\item \emph{pure disbelief extension} of $\phi$ is the set of states where $\phi$ has value $\false$ (i.e., \emph{exactly false}, in other words, false and not true);
\item \emph{conflict extension} of $\phi$ is the set of states where $\phi$ has value $\both$ (i.e., \emph{both true and false});
\item \emph{uncertainty extension} of $\phi$ is the set of states where $\phi$ has value $\neither$ (i.e., \emph{neither true nor false}).
\end{itemize}
\begin{definition}[$\BD$-models with $\four$-probabilities]\label{def:4model}
A \emph{$\BD$-model with a $\four$-probability} is a tuple $\mathfrak{M}_\four=\langle\mathfrak{M},\mu_\four\rangle$ with $\mathfrak{M}$ being a $\BD$-model and $\mu_\four:2^W\rightarrow[0,1]$ satisfying:
\begin{description}
\item[$\mathbf{part}$:] $\mu_\four(|\phi|^\purebel_\mathfrak{M})+\mu_\four(|\phi|^\puredisbel_\mathfrak{M})+\mu_\four(|\phi|^\uncert_\mathfrak{M})+\mu_\four(|\phi|^\conf_\mathfrak{M})=1$;
\item[$\mathbf{neg}$:] $\mu_\four(|\neg\phi|^\purebel_\mathfrak{M})=\mu_\four(|\phi|^\puredisbel_\mathfrak{M})$, $\mu_\four(|\neg\phi|^\conf_\mathfrak{M})=\mu_\four(|\phi|^\conf_\mathfrak{M})$;
\item[$\mathbf{contr}$:] $\mu_\four(|\phi\wedge\neg\phi|^\purebel_\mathfrak{M})=0$, $\mu_\four(|\phi\wedge\neg\phi|^\conf_\mathfrak{M})=\mu_\four(|\phi|^\conf_\mathfrak{M})$;
\item[$\mathbf{BCmon}$:] if $\mathfrak{M}\models[\phi\vdash\chi]$, then $\mu_\four(|\phi|^\purebel_\mathfrak{M})+\mu_\four(|\phi|^\conf_\mathfrak{M})\leq\mu_\four(|\chi|^\purebel_\mathfrak{M})+\mu_\four(|\chi|^\conf_\mathfrak{M})$;
\item[$\mathbf{BCex}$:] $\mu_\four(|\phi|^\purebel)+\mu_\four(|\phi|^\conf)+\mu_\four(|\psi|^\purebel)+\mu_\four(|\psi|^\conf)=\mu_\four(|\phi\wedge\psi|^\purebel)+\mu_\four(|\phi\wedge\psi|^\conf)+\mu_\four(|\phi\vee\psi|^\purebel)+\mu_\four(|\phi\vee\psi|^\conf)$.
\end{description}
\end{definition}
\begin{convention}
We will further omit lower indices in $|\phi|^+_\mathfrak{M}$, $|\phi|^-_\mathfrak{M}$, $|\phi|^\purebel_\mathfrak{M}$, etc.\ and write $|\phi|^+$, $|\phi|^-$, $|\phi|^\purebel$, etc.\ when the model is clear from the context.

We will utilise the following naming convention:
\begin{itemize}
\item we use the term \emph{‘$\pm$-probability’} to stand for $\mu$ from Definition~\ref{def:measuredmodel};
\item we call $\mu_\four$ from Definition~\ref{def:4model} a~\emph{‘$\four$-probability’} or a~\emph{‘four-valued probability’}.
\end{itemize}
Recall that $\pm$-probabilities are referred to as ‘non-standard’ by~\cite{KleinMajerRad2021} and~\cite{BilkovaFrittellaKozhemiachenkoMajerNazari2023APAL}. As this term is too broad (four-valued probabilities are not ‘standard’ either), we use a~different designation.
\end{convention}

Let us quickly discuss the measures defined above. First, observe that $\mu(|\phi|^+)$ and $\mu(|\phi|^-)$ are independent from one another. Thus, $\mu$ gives two measures to each $\phi$, as desired. Second, recall~\cite[Theorems~2--3]{KleinMajerRad2021} that every $\four$-probability on a $\BD$-model induces a $\pm$-probability and vice versa. In the following sections, we will define two-layered logics for $\BD$-models with $\pm$- and $\four$-probabilities and show that they can be faithfully embedded into each other. It is instructive to note, moreover, that while $\pm$- and $\four$-probabilities can be simulated by the classical ones, they do behave in a very different way.
\begin{convention}[Notation in models]
Throughout the paper, we are going to give examples of various models. We use the following notation for values of variables in the states of a~given model.
\begin{center}
\begin{tabular}{c|c}
\textbf{notation}&\textbf{meaning}\\\hline
$w:p^+$&$w\vDash^+p$ and $w\nvDash^-p$\\
$w:p^-$&$w\vDash^-p$ and $w\nvDash^+p$\\
$w:p^\pm$&$w\vDash^+p$ and $w\vDash^-p$\\
$w:\xcancel{p}$&$w\nvDash^+p$ and $w\nvDash^-p$
\end{tabular}
\end{center}
\end{convention}
\begin{example}[Paraconsistent vs.\ classical probabilities]\label{example:+-nomeasure}
First of all, observe that a~$\pm$-probability \emph{can be uniform}. Indeed, for every $c\in[0,1]$ and every $\BD$-model $\langle W,v^+,v^-\rangle$, it is easy to check that the assignment $\forall X\subseteq W:\mu(X)=c$ \emph{is a~$\pm$-probability}.

Second, the general import-export condition
\begin{align}\label{equ:exportimport}
\mu(X\cup Y)&=\mu(X)+\mu(Y)-\mu(X\cap Y)
\tag{$\mathsf{IE}$}
\end{align}
that is true for the classical probabilities does not hold if $\mu$ is a~$\pm$-probability. For consider Fig.~\ref{fig:importexportcounterexample}. One can see that $\mu$ is monotone and that $\mu(|\phi|^+)=\mu(|\phi|^-)=0$ for every $\phi\in\LBD$ (whence, $\mu$~is a~$\pm$-probability). On the other hand, it is clear that $\mu(W)\neq\mu(\{u_1\})+\mu(\{u_2\})-\mu(\varnothing)$.

\begin{figure}
\centering
\[\xymatrix{u_1:\xcancel{p}&u_2:\xcancel{p}}\]
\vspace{1em}
\caption{A counterexample to~\eqref{equ:exportimport}: $\mu(\{u_1\})=\mu(\{u_2\})=\frac{1}{3}$, $\mu(W)=1$, and $\mu(\varnothing)=0$. All variables have the same values exemplified by $p$.}
\label{fig:importexportcounterexample}
\end{figure}

This, however, is not a~problem since for every $\BD$-model with a~$\pm$-pro\-ba\-bi\-li\-ty $\langle W,v^+,v^-,\mu\rangle$, there exists a~$\BD$-model $\langle W',v'^+,v'^-,\pi\rangle$ with a~\emph{classical} probability measure $\pi$ s.t.\ $\pi(|\phi|^+)=\mu(|\phi|^+)$~\cite[Theorem~4]{KleinMajerRad2021}.

First of all, for the case of the model shown in Fig.~\ref{fig:importexportcounterexample}, one can either define $\pi(\{u_1\})=\pi(\{u_2\})=\frac{1}{2}$, or add a new state $u_3$ where all variables are neither true nor false. For a~bit more refined example, consider Fig.~\ref{fig:classicaluniformassignment} and set $W=\{w_1,w_2\}$ and $W'=\{w'_1,w'_2,w'_3\}$. It is clear that for every $\phi\in\LBD$, $\mu(|\phi|^+)=\pi(|\phi|^+)=\frac{1}{2}$.

Likewise, $\mu_\four$ is not necessarily monotone w.r.t.\ $\subseteq$ (which is required of the classical probabilities) since not every subset of a~model is represented by an extension of an $\LBD$ formula. Again, it is not a~problem since for every $\BD$-model with $\four$-probability $\langle W,v^+,v^-,\mu_\four\rangle$, there exist a~$\BD$-model $\langle W',v'^+,v'^-,\pi\rangle$ with a~\emph{classical} probability measure $\pi$ s.t.\ $\pi(|\phi|^\mathsf{x})=\mu_\four(|\phi|^\mathsf{x})$ for $\mathsf{x}\in\{\purebel,\puredisbel,\conf,\uncert\}$~\cite[Theorem~5]{KleinMajerRad2021}.

Thus, we will further assume w.l.o.g.\ that $\mu$ and $\mu_\four$ are \emph{classical probability measures} on $W$.
\end{example}
\begin{figure}
\centering
\[\xymatrix{w_1:p^+&w_2:p^\pm&&w'_1:p^+&w'_2:p^\pm&w'_3:\xcancel{p}}\]
\vspace{1em}
\caption{The values of all variables coincide with the values of $p$ state-wise. $\mu(X)=\frac{1}{2}$ for every $X\subseteq W$; $\pi(\varnothing)=\pi(\{w'_1\})=0$, $\pi(W')=1$, $\pi(X')=\frac{1}{2}$ otherwise.}
\label{fig:classicaluniformassignment}
\end{figure}
\section{Logics for paraconsistent probabilities\label{sec:twolayeredprobabilities}}
In the \nameref{sec:introduction}, we saw that there could be two views on formalising paraconsistent probabilities (or uncertainty measures in general). The first option is to define probabilities in a~paraconsistent logic. This is done, e.g., by~\cite{Dunn2010}, \cite{Bueno-SolerCarnielli2016}, and~\cite{KleinMajerRad2021}. Another option is to employ a~two-layered framework where the outer layer is not explosive as \cite{FlaminioGodoUgolini2022} do. Our approach can be thought of as a~combination of these two. Not only are our probabilities defined over a paraconsistent logic, but the logic with which we reason about them is also non-explosive.

In this section, we provide two-layered logics that are (weakly) complete w.r.t.\ $\BD$-models with $\pm$- and $\four$-probabilities. Since conditions on measures contain arithmetic operations on $[0,1]$, we choose an expansion of Łukasiewicz logic, namely, Łukasiewicz logic with~$\triangle$ ($\Luktriangle$), for the outer layer. Furthermore, $\pm$-probabilities work with both positive and negative extensions of formulas, whence it is reasonable to use $\Luksquareorder$ --- a paraconsistent expansion of $\Luk$ (cf.~\cite{BilkovaFrittellaMajerNazari2020} and~\cite{BilkovaFrittellaKozhemiachenko2021TABLEAUX} for details) with two valuations --- $v_1$ (support of truth) and $v_2$ (support of falsity) --- on $[0,1]$.
\subsection{Languages, semantics, and equivalence}
Let us first recall the semantics of $\Luktriangle$ and $\Luksquareorder$. To facilitate the presentation, we begin with the definition of the standard $\Luktriangle$ algebra on $[0,1]$ that we will then use to define the semantics of both $\Luktriangle$ and $\Luksquareorder$.
\begin{definition}\label{def:Lukalgebra}
The standard $\Luktriangle$ algebra is a tuple $\langle[0,1],{\sim_\Luk},\triangle_\Luk,\wedge_\Luk,\vee_\Luk,\rightarrow_\Luk,\odot_\Luk,\oplus_\Luk,\ominus_\Luk\rangle$ with the operations are defined as follows.
\begin{align*}
{\sim_\Luk}a&\coloneqq1-a&\triangle_\Luk a&\coloneqq\begin{cases}1&\text{if }a=1\\0&\text{otherwise}\end{cases}
\end{align*}
\begin{align*}
a\wedge_\Luk b&\coloneqq\min(a,b)&a\vee_\Luk b&\coloneqq\max(a,b)&a\rightarrow_\Luk b&\coloneqq\min(1,1-a+b)\\
a\odot_\Luk b&\coloneqq\max(0,a+b-1)&a\oplus_\Luk b&\coloneqq\min(1,a+b)&a\ominus_\Luk b&\coloneqq\max(0,a-b)
\end{align*}
\end{definition}
\begin{definition}[$\Luktriangle$]\label{def:Lukasiewicz}
The language of $\Luktriangle$ is given via the following grammar
\[\LLuk\ni\phi\coloneqq p\in\Prop\mid{\sim}\phi\mid\triangle\phi\mid(\phi\wedge\phi)\mid(\phi\vee\phi)\mid(\phi\rightarrow\phi)\mid(\phi\odot\phi)\mid(\phi\oplus\phi)\mid(\phi\ominus\phi)\]
We will also write $\phi\leftrightarrow\chi$ as a shorthand for $(\phi\rightarrow\chi)\odot(\chi\rightarrow\phi)$.

A valuation is a map $v:\Prop\rightarrow[0,1]$ that is extended to the complex formulas as expected: $v(\phi\circ\chi)=v(\phi)\circ_\Luk v(\chi)$.

$\phi$ is \emph{$\Luktriangle$-valid} iff $v(\phi)=1$ for every~$v$. $\Gamma$ \emph{entails} $\chi$ (denoted $\Gamma\models_{\Luktriangle}\chi$) iff for every $v$ s.t.\ $v(\phi)=1$ for all $\phi\in\Gamma$, it holds that $v(\chi)=1$ as well.
\end{definition}
\begin{definition}[$\Luksquareorder$]\label{def:Luk2triangle}
The language is constructed using the following grammar.
\[\LLuksquareorder\ni\phi\coloneqq p\in\Prop\mid\neg\phi\mid{\sim}\phi\mid\triangle\phi\mid(\phi\rightarrow\phi)\]
The semantics is given by \emph{two} valuations $v_1$ (support of truth) and $v_2$ (support of falsity) $v_1,v_2:\Prop\rightarrow[0,1]$ that are extended as follows.
\begin{align*}
v_1(\neg\phi)&=v_2(\phi)&v_2(\neg\phi)&=v_1(\phi)\\
v_1({\sim}\phi)&={\sim_\Luk}v_1(\phi)&v_2({\sim}\phi)&={\sim_\Luk}v_2(\phi)\\
v_1(\triangle\phi)&=\triangle_\Luk v_1(\phi)&v_2(\triangle\phi)&={\sim_\Luk}\triangle_\Luk{\sim}_\Luk v_2(\phi)\\
v_1(\phi\rightarrow\chi)&=v_1(\phi)\rightarrow_\Luk v_1(\chi)&v_2(\phi\rightarrow\chi)&=v_2(\chi)\ominus_\Luk v_2(\phi)
\end{align*}
We say that $\phi$ is \emph{$\Luksquareorder$-valid} iff for every $v_1$ and $v_2$, it holds that $v_1(\phi)=1$ and $v_2(\phi)=0$.
\end{definition}
\begin{convention}
When there is no risk of confusion, we will use $v(\phi)=(x,y)$ to stand for $v_1(\phi)=x$ and $v_2(\phi)=y$.
\end{convention}
\begin{remark}\label{rem:smalllanguage}
Note that ${\sim}$ and $\rightarrow$ can define all other binary connectives in $\Luktriangle$ and $\Luksquareorder$.
\begin{align*}
\phi\vee\chi&\coloneqq(\phi\rightarrow\chi)\rightarrow\chi&\phi\wedge\chi&\coloneqq{\sim}({\sim}\phi\vee{\sim}\chi)&\phi\oplus\chi&\coloneqq{\sim}\phi\rightarrow\chi\\
\phi\odot\chi&\coloneqq{\sim}(\phi\rightarrow{\sim}\chi)&\phi\ominus\chi&\coloneqq\phi\odot{\sim}\chi&\phi\leftrightarrow\chi&\coloneqq(\phi\rightarrow\chi)\odot(\chi\rightarrow\phi)
\end{align*}
\end{remark}

We are now ready to present our two-layered logics for paraconsistent probabilities: $\LukProbsquare$ --- the logic of $\pm$-probabilities and $\fourLukProb$ --- the logic of $\four$-probabilities.
\begin{definition}[$\fourLukProb$: language and semantics]\label{def:4ProbLuk}
The language of $\fourLukProb$ is constructed via the following grammar:
\begin{align*}
\LfourLukProb\ni\alpha&\coloneqq\purebelmod\phi\mid\puredisbelmod\phi\mid\conflmod\phi\mid\uncertmod\phi\mid{\sim}\alpha\mid\triangle\alpha\mid(\alpha\rightarrow\alpha)\tag{$\phi\in\LBD$}
\end{align*}
A $\fourLukProb$-model is a tuple $\mathbb{M}=\langle\mathfrak{M},\mu_\four,e\rangle$ s.t.
\begin{itemize}
\item $\langle\mathfrak{M},\mu_\four\rangle$ is a $\BD$-model with $\four$-probability;
\item $e$ is a \emph{$\Luktriangle$ valuation induced by $\mu_\four$}, i.e.:
\begin{itemize}
\item $e(\purebelmod\phi)=\mu_\four(|\phi|^\purebel)$, $e(\puredisbelmod\phi)=\mu_\four(|\phi|^\puredisbel)$, $e(\conflmod\phi)=\mu_\four(|\phi|^\conf)$, $e(\uncertmod\phi)=\mu_\four(|\phi|^\uncert)$;
\item values of complex $\LfourLukProb$-formulas are computed via Definition~\ref{def:Lukasiewicz}.
\end{itemize}
\end{itemize}

We say that $\alpha$ is \emph{$\fourLukProb$-valid} iff $e(\alpha)=1$ in every $\fourLukProb$-model. A set of formulas $\Gamma$ \emph{entails} $\alpha$ ($\Gamma\models_{\fourLukProb}\alpha$) iff there is no $\mathbb{M}$ s.t.\ $e(\gamma)=1$ for every $\gamma\in\Gamma$ but $e(\alpha)\neq1$.
\end{definition}
\begin{definition}[$\LukProbsquare$: language and semantics]\label{def:PrLuk2}
The language of $\LukProbsquare$ is given by the following grammar
\begin{align*}
\LLukProbsquare\ni\alpha&\coloneqq\Prob\phi\mid{\sim}\alpha\mid\neg\alpha\mid\triangle\alpha\mid(\alpha\rightarrow\alpha)\tag{$\phi\in\LBD$}
\end{align*}
A~$\LukProbsquare$-model is a tuple $\mathbb{M}=\langle\mathfrak{M},\mu,e_1,e_2\rangle$ s.t.
\begin{itemize}
\item $\langle\mathfrak{M},\mu\rangle$ is a $\BD$-model with $\pm$-probability;
\item $e_1$ and $e_2$ are \emph{$\Luksquareorder$ valuations induced by $\mu$}, i.e.:
\begin{itemize}
\item $e_1(\Prob\phi)=\mu(|\phi|^+)$, $e_2(\Prob\phi)=\mu(|\phi|^-)$;
\item the values of complex $\LLukProbsquare$ formulas are computed following Definition~\ref{def:Luk2triangle}.
\end{itemize}
\end{itemize}

We say that $\alpha$ is \emph{$\LukProbsquare$-valid} iff $e(\alpha)=(1,0)$ in every $\LukProbsquare$-model. A set of formulas $\Gamma$ \emph{entails} $\alpha$ ($\Gamma\models_{\LukProbsquare}\alpha$) iff there is no $\mathbb{M}$ s.t.\ $e(\gamma)=(1,0)$ for all $\gamma\in\Gamma$ but $e(\alpha)\neq(1,0)$.
\end{definition}

We note quickly that in Definitions~\ref{def:4ProbLuk} and~\ref{def:PrLuk2}, $e$ (as well as $e_1$ and $e_2$) are valuations of \emph{outer-layer} formulas. Thus, they are defined only on \emph{modal atoms}, not propositional variables. Propositional variables, in turn, have their values assigned by $v^+$ and $v^-$ valuations of the $\BD$-model over which the $\fourLukProb$- or $\LukProbsquare$-model is built.

The following property of $\LukProbsquare$ will be useful further in the section.
\begin{lemma}\label{lemma:LukProbsquareconflation}
Let $\alpha\in\LLukProbsquare$. Then, $\alpha$ is $\LukProbsquare$-valid iff $e_1(\alpha)=1$ in every $\LukProbsquare$-model.
\end{lemma}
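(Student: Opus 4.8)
The plan is to prove both directions, the forward one being immediate and the converse requiring a \emph{conflation} argument. The forward direction is trivial: by Definition~\ref{def:PrLuk2}, $\LukProbsquare$-validity of $\alpha$ means $e(\alpha)=(1,0)$ in every model, so in particular $e_1(\alpha)=1$ everywhere. For the converse I would assume $e_1(\alpha)=1$ in every $\LukProbsquare$-model and show $e_2(\alpha)=0$ in every model. Fixing an arbitrary $\mathbb{M}=\langle\mathfrak{M},\mu,e_1,e_2\rangle$, I would exhibit another model $\mathbb{M}'$ in which $e_1'(\alpha)=1-e_2(\alpha)$, so that applying the hypothesis to $\mathbb{M}'$ forces $e_2(\alpha)=0$; since $\mathbb{M}$ is arbitrary this gives the claim.

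The construction of $\mathbb{M}'$ goes as follows. Using Example~\ref{example:+-nomeasure} I may assume w.l.o.g.\ that $\mu$ is a classical (additive) probability measure on $W$, so $\mu(W\setminus X)=1-\mu(X)$. I would build the conflated $\BD$-frame $\mathfrak{M}'=\langle W,v'^+,v'^-\rangle$ by complementing \emph{and} swapping the two valuations, $v'^+(p)=W\setminus v^-(p)$ and $v'^-(p)=W\setminus v^+(p)$. A straightforward induction on $\phi\in\LBD$ (using $|\neg\phi|^+=|\phi|^-$ and the De Morgan behaviour of $\wedge,\vee$ under complement) then shows $|\phi|^+_{\mathfrak{M}'}=W\setminus|\phi|^-_{\mathfrak{M}}$ and $|\phi|^-_{\mathfrak{M}'}=W\setminus|\phi|^+_{\mathfrak{M}}$. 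Keeping the same $\mu$, the pair $\langle\mathfrak{M}',\mu\rangle$ is again a $\BD$-model with a $\pm$-probability: \textbf{neg} holds because $|\phi|^-_{\mathfrak{M}'}=|\neg\phi|^+_{\mathfrak{M}'}$ always, \textbf{mon} because $\mu$ is monotone, and \textbf{ex} by inclusion–exclusion for the additive $\mu$. Hence $\mathbb{M}'$ is a legitimate $\LukProbsquare$-model, and by additivity its induced atoms satisfy $e_1'(\Prob\phi)=\mu(W\setminus|\phi|^-_\mathfrak{M})=1-e_2(\Prob\phi)$ and, symmetrically, $e_2'(\Prob\phi)=1-e_1(\Prob\phi)$.

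The key lemma is that this coordinatewise conflation is preserved by all outer connectives: by simultaneous induction on $\alpha\in\LLukProbsquare$ I would prove $e_1'(\alpha)=1-e_2(\alpha)$ and $e_2'(\alpha)=1-e_1(\alpha)$. Both $e'$ and $e$ extend the atoms by the same clauses of Definition~\ref{def:Luk2triangle}, and each clause respects the relation: the cases of $\neg$ and ${\sim}$ are immediate, while the cases of $\triangle$ and $\rightarrow$ work out precisely because the $v_2$-clauses are the De Morgan duals $a\mapsto 1-\triangle_\Luk(1-a)$ and $y\ominus_\Luk x=1-\bigl((1-x)\rightarrow_\Luk(1-y)\bigr)$ of the corresponding $v_1$-clauses. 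Matching these asymmetric $\triangle$ and $\rightarrow$ clauses is the only delicate bookkeeping in the argument, and the outer $\neg$ is what forces me to carry both relations through the induction simultaneously. Applying $e_1'(\alpha)=1$ then yields $1-e_2(\alpha)=1$, i.e.\ $e_2(\alpha)=0$, as required.

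The main obstacle, and the only place where the naive approach breaks, is this realisation step: I must produce a \emph{genuine} $\pm$-probability model whose induced valuation is the conflation of $\mathbb{M}$'s, and the obvious guess $\mu'=1-\mu$ fails \textbf{mon} (it is anti-monotone). The resolution is to push the complementation onto the \emph{frame} — swapping and complementing $v^+,v^-$ — while leaving the measure itself untouched, which is sound exactly because the w.l.o.g.\ reduction of Example~\ref{example:+-nomeasure} lets me take $\mu$ additive.
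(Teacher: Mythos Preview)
Your proposal is correct and follows essentially the same approach as the paper: both use the conflation $(v'^+,v'^-)=(W\setminus v^-,\,W\setminus v^+)$ on a model where $\mu$ is w.l.o.g.\ a classical probability, establish $|\phi|^+_{\mathfrak{M}'}=W\setminus|\phi|^-_\mathfrak{M}$, and then prove by induction that $(e_1',e_2')=(1-e_2,\,1-e_1)$ propagates through all outer connectives. The only cosmetic difference is that the paper argues by contraposition (from $e_2(\alpha)\neq 0$ to a model with $e_1<1$) whereas you phrase the converse directly; your closing remark about why the naive $\mu'=1-\mu$ fails is a nice addition not made explicit in the paper.
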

\begin{proof}
Let $\mathbb{M}=\langle W,v^+,v^-,\mu,e_1,e_2\rangle$ be a $\LukProbsquare$-model s.t.\ $e_2(\alpha)\neq0$. We construct a model $\mathbb{M}^*=\langle W,(v^*)^+,(v^*)^-,\mu,e^*_1,e^*_2\rangle$ where $e^*_1(\alpha)\neq1$. To do this, we define new $\BD$ valuations $(v^*)^+$ and $(v^*)^-$ on $W$ as follows.
\begin{align*}
(v^*)^+&=W\setminus v^-&(v^*)^-&=W\setminus v^+
\end{align*}
It can be easily checked by induction on $\phi\in\LBD$ that
\begin{align}\label{equ:conflatedmodels}
|\phi|^+_\mathbb{M}&=W\setminus|\phi|^-_{\mathbb{M}^*}&|\phi|^-_\mathbb{M}&=W\setminus|\phi|^+_{\mathbb{M}^*}
\end{align}
Now, since we can w.l.o.g.\ assume that $\mu$ is a~(classical) probability measure on~$W$ (recall Example~\ref{example:+-nomeasure}), we have that
\begin{align}\label{equ:conflatedprobability}
e^*(\Prob\phi)=(1-\mu(|\phi|^-),1-\mu(|\phi|^+))=(1-e_2(\Prob\phi),1-e_1(\Prob\phi))
\end{align}
It remains to show that $e^*(\alpha)=(1-e_2(\alpha),1-e_1(\alpha))$ for every $\alpha\in\LLukProbsquare$. We proceed by induction on formulas. The basis case of $\alpha=\Prob\phi$ holds by~\eqref{equ:conflatedprobability}.

Consider $\alpha=\beta\rightarrow\beta'$.
\begin{align*}
e^*_1(\beta\rightarrow\beta')&=\min(1,1-e^*_1(\beta)+e^*_1(\beta'))\\
&=\min(1,1-(1-e_2(\beta))+(1-e_2(\beta')))\tag{by IH}\\
&=\min(1,1-e_2(\beta')+e_2(\beta))\\
&=1-\max(0,e_2(\beta')-e_2(\beta))\\
&=1-e_2(\beta\rightarrow\beta')
\end{align*}
\begin{align*}
e^*_2(\beta\rightarrow\beta')&=\max(0,e^*_2(\beta')-e^*_2(\beta))\\
&=\max(0,1-e_1(\beta')-(1-e_1(\beta)))\tag{by IH}\\
&=\max(0,e_1(\beta)-e_1(\beta'))\\
&=1-\min(1,1-e_1(\beta)+e_1(\beta'))
\end{align*}
The remaining cases of $\alpha=\neg\beta$, $\alpha={\sim}\beta$, and $\alpha=\triangle\beta$ can be tackled similarly.

It is now clear that if $e(\alpha)=(1,y)$ for some $y>0$, we have $e^*(\alpha)=(1-y,0)$, i.e., $e^*_1(\alpha)<1$, as required. The result follows.
\end{proof}

\begin{example}[Values of formulas in models]\label{example:formulasvalues}
Consider Fig.~\ref{fig:formulasvalues} with a~classical probability $\mu$. Let us compute the values of the following (atomic) formulas: $\Prob(p\wedge\neg q)$, $\puredisbelmod(p\wedge\neg q)$, and $\purebelmod(q\vee\neg q)$.

We have $|p\wedge\neg q|^+=\{w_1\}$ and $|p\wedge\neg p|^-=\{w_1,w_3\}$. Thus, $\mu(|p\wedge\neg q|^+)=\frac{1}{3}$ and $\mu(|p\wedge\neg q|^-)=\frac{1}{2}$ which gives us $e(\Prob(p\wedge\neg q))=\left(\frac{1}{3},\frac{1}{2}\right)$. Moreover, $|p\wedge\neg q|^\puredisbel=\{w_3\}$, whence $\puredisbelmod(p\wedge\neg q)=\frac{1}{6}$. For $q\vee\neg q$, we have $|q\vee\neg q|^\purebel=\{w_2\}$, and thus, $e(\purebelmod(q\vee\neg q))=\frac{1}{2}$.
\end{example}
\begin{figure}
\centering
\[\xymatrix{w_1:p^+,q^\pm&w_2:\xcancel{p},q^-&w_3:p^-,\xcancel{q}}\]
\vspace{1em}
\caption{$\mu(\{w_1\})=\frac{1}{3}$, $\mu(\{w_2\})=\frac{1}{2}$, $\mu(\{w_3\})=\frac{1}{6}$.}
\label{fig:formulasvalues}
\end{figure}

Before establishing faithful translations between $\LukProbsquare$ and $\fourLukProb$, let us recall that in the \nameref{sec:introduction}, we mentioned that quantitative statements about uncertainty expressed in the natural language may sound like ‘I think that rain is twice more likely than snow’. We show how to formalise this statement.
\begin{example}[Formalisation]\label{example:formalisation}~
\begin{itemize}
\item[]$\mathtt{twice}$: I think that rain is twice more likely than snow.
\end{itemize}
We are going to translate this statement into $\LfourLukProb$ and treat ‘I think that’ as pure belief modality. We denote ‘it is going to rain’ with $r$ and ‘it is going to snow’ with $s$. It remains to write down a~formula $\phi_\mathtt{twice}$ that has value $1$ iff $e(\purebelmod r)=2\cdot e(\purebelmod s)$. Consider the following formula
\begin{align*}
\phi_\mathtt{twice}&=\triangle((\purebelmod r\ominus\purebelmod s)\leftrightarrow\purebelmod s)
\end{align*}
Note that a more intuitive formalisation --- $\triangle(\purebelmod r\leftrightarrow(\purebelmod s\oplus\purebelmod s))$ --- would not work: $\oplus$ is a~\emph{truncated sum}, whence it, e.g., does not exclude the situation with both $\purebelmod r$ and $\purebelmod s$ having value~$1$. It can, however, be altered as follows: $\triangle(\purebelmod r\leftrightarrow(\purebelmod s\oplus\purebelmod s))\wedge{\sim}(\purebelmod s\odot\purebelmod s)$ which will give the desired outcome.
\end{example}

It is also clear that desiderata (1), (3), and (4) listed in Section~\ref{ssec:plan} are satisfied by both $\fourLukProb$ and $\LukProbsquare$. It is less straightforward, however, to see how we can formalise preferring one source to another (desideratum (2)). We explain this in the following remark.
\begin{remark}
To represent different sources, one can consider $\BD$-models with \emph{several measures}, each representing a~source, and, accordingly, expand the language with other modalities corresponding to these new measures. We can state, for example, that one source ($s_1$) considers $\phi$ to be more likely than the other source ($s_2$) does. In $\fourLukProb$ we can interpret this as the value of $\purebelmod_{s_1}\phi$ being \emph{smaller than} the value of $\purebelmod_{s_2}\phi$. This is formalised as follows:
\begin{align*}
{\sim}\triangle(\purebelmod_{s_2}\phi\rightarrow\purebelmod_{s_1}\phi)
\end{align*}
Unfortunately, there seems to be no direct way of representing the \emph{degrees of trust} the agent assigns to $s_1$ and $s_2$ using only modalities treated as measures in the Łukasiewicz setting. A~traditional way (cf., e.g.,~\cite[p.252]{Shafer1976}) of accounting for the degree of trust in a~given source is to multiply the value a~mass function gives to $X\subseteq W$ by some $x\in[0,1]$. Thus, to model this approach, one would need a~combination of Rational Pavelka and Product logics. Another option would be to redefine $e(\purebelmod_s\phi)=(\mathsf{tr}_s\odot_\Luk\mu(|\phi|^\purebel))$ and similarly for other modalities (with $\mathsf{tr}_s\in[0,1]$ standing for the trust in source $s$). It is unclear, however, how to axiomatise this logic.

It is possible, though, to make different modalities stand for different types of measures (e.g., $\purebelmod_{s_1}$ can be generated by a~$\four$-probability while $\purebelmod_{s_2}$ by a~belief function). This represents the \emph{different ways of aggregating the data} the agent can have.
\end{remark}

We finish the section by establishing faithful embeddings of $\fourLukProb$ and $\LukProbsquare$ into one another. First, we introduce some technical notions that will simplify the proof.
\begin{convention}\label{conv:LukProbsquarenegfree}
We say that $\alpha\in\LLukProbsquare$ is \emph{outer-$\neg$-free} when $\neg$'s appear only inside modal atoms.
\end{convention}
\begin{definition}\label{def:positiveNNFs}
Let $\phi\in\LBD$ and $\alpha\in\LLukProbsquare$. $\alpha^\neg$ is produced from $\alpha$ by successively applying the following transformations.
\begin{align*}
\neg\Prob\phi&\rightsquigarrow\Prob\neg\phi&\neg\neg\alpha&\rightsquigarrow\alpha&\neg{\sim}\alpha&\rightsquigarrow{\sim}\neg\alpha\\\neg(\alpha\rightarrow\alpha')&\rightsquigarrow{\sim}(\neg\alpha'\rightarrow\neg\alpha)&\neg\triangle\alpha&\rightsquigarrow{\sim}\triangle{\sim}\neg\alpha
\end{align*}
\end{definition}

It is easy to see that $\alpha^\neg$ is outer-$\neg$-free. Using Definitions~\ref{def:Luk2triangle} and~\ref{def:PrLuk2}, one can also check that $e(\alpha)=e(\alpha^\neg)$ in every $\LukProbsquare$-model.
\begin{definition}\label{def:embeddings}
Let $\phi\in\LBD$ and $\alpha\in\LLukProbsquare$ be outer-$\neg$-free. We define $\alpha^\four\in\LfourLukProb$ as follows.
\begin{align*}
(\Prob\phi)^\four&=\purebelmod\phi\oplus\conflmod\phi\\
(\heartsuit\alpha)^\four&=\heartsuit\alpha^\four\tag{$\heartsuit\in\{\triangle,{\sim}\}$}\\
(\alpha\rightarrow\alpha')^\four&=\alpha^\four\rightarrow\alpha'^\four
\end{align*}

Let $\beta\in\LfourLukProb$. We define $\beta^\pm$ as follows.
\begin{align*}
(\purebelmod\phi)^\pm&=\Prob\phi\ominus\Prob(\phi\wedge\neg\phi)\\
(\conflmod\phi)^\pm&=\Prob(\phi\wedge\neg\phi)\\
(\uncertmod\phi)^\pm&={\sim}\Prob(\phi\vee\neg\phi)\\
(\puredisbelmod\phi)^\pm&=\Prob\neg\phi\ominus\Prob(\phi\wedge\neg\phi)\\
(\heartsuit\beta)^\pm&=\heartsuit\beta^\pm\tag{$\heartsuit\in\{\triangle,{\sim}\}$}\\
(\beta\rightarrow\beta')^\pm&=\beta^\pm\rightarrow\beta'^\pm
\end{align*}
\end{definition}
\begin{theorem}\label{theorem:embeddings1}
$\alpha\in\LLukProbsquare$ is $\LukProbsquare$-valid iff $(\alpha^\neg)^\four$ is $\fourLukProb$-valid.
\end{theorem}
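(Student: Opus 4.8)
The plan is to reduce the claim to a statement about outer-$\neg$-free formulas and then transfer validity across the two classes of models by a single inductive computation. As noted after Definition~\ref{def:positiveNNFs}, $\alpha^\neg$ is outer-$\neg$-free and $e(\alpha)=e(\alpha^\neg)$ in every $\LukProbsquare$-model, so $\alpha$ is $\LukProbsquare$-valid iff $\alpha^\neg$ is. Writing $\beta\coloneqq\alpha^\neg$, it therefore suffices to prove, for every outer-$\neg$-free $\beta\in\LLukProbsquare$, that $\beta$ is $\LukProbsquare$-valid iff $\beta^\four$ is $\fourLukProb$-valid. I would then invoke Lemma~\ref{lemma:LukProbsquareconflation} to replace the two-component validity condition $e(\beta)=(1,0)$ by the single condition $e_1(\beta)=1$, so the target becomes: $e_1(\beta)=1$ in every $\LukProbsquare$-model iff $e(\beta^\four)=1$ in every $\fourLukProb$-model.

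Second, I would set up the correspondence between the two classes of models. By \cite[Theorems~2--3]{KleinMajerRad2021}, every $\four$-probability $\mu_\four$ on a $\BD$-model induces a $\pm$-probability $\mu$ and conversely, the two being linked by $\mu(|\phi|^+)=\mu_\four(|\phi|^\purebel)+\mu_\four(|\phi|^\conf)$ and $\mu(|\phi|^-)=\mu_\four(|\phi|^\puredisbel)+\mu_\four(|\phi|^\conf)$, which just reflect the disjoint decompositions $|\phi|^+=|\phi|^\purebel\sqcup|\phi|^\conf$ and $|\phi|^-=|\phi|^\puredisbel\sqcup|\phi|^\conf$. Since by Example~\ref{example:+-nomeasure} we may assume both $\mu$ and $\mu_\four$ are classical probability measures on $W$, this correspondence is a bijection on models sharing the same underlying $\BD$-frame and preserves all the measure axioms.

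Third, the heart of the argument is the claim that for every outer-$\neg$-free $\beta$ the support-of-truth value $e_1(\beta)$ in a $\pm$-model equals the value $e(\beta^\four)$ in the corresponding $\four$-model, proved by induction on $\beta$. The base case $\beta=\Prob\phi$ is where the content lies: here $\beta^\four=\purebelmod\phi\oplus\conflmod\phi$, so $e(\beta^\four)=\min(1,\mu_\four(|\phi|^\purebel)+\mu_\four(|\phi|^\conf))$; since $|\phi|^\purebel$ and $|\phi|^\conf$ are disjoint, their measures sum to $\mu_\four(|\phi|^\purebel\cup|\phi|^\conf)\leq1$, so the truncation is inert and the value equals $\mu(|\phi|^+)=e_1(\Prob\phi)$. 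For the inductive step, the crucial observation is that on the connectives $\{{\sim},\triangle,\rightarrow\}$ the support-of-truth clauses of $\Luksquareorder$ (Definition~\ref{def:Luk2triangle}) coincide verbatim with the $\Luktriangle$ clauses (Definition~\ref{def:Lukasiewicz}); because $\beta$ is outer-$\neg$-free these are the only outer connectives it can contain, so $e_1$ acts on $\beta$ exactly as a $\Luktriangle$ valuation acts on $\beta^\four$, and the translation commutes with each connective by Definition~\ref{def:embeddings}.

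Combining the three steps, $e_1(\beta)=1$ holds in every $\pm$-model iff $e(\beta^\four)=1$ holds in every $\four$-model, which by the model bijection and Lemma~\ref{lemma:LukProbsquareconflation} is exactly the reduced target; unwinding $\beta=\alpha^\neg$ yields the theorem. The main obstacle I anticipate is the bookkeeping around the model correspondence: one must check that the bijection from \cite{KleinMajerRad2021} genuinely ranges over all models on both sides, so that ``valid in every model'' transfers cleanly in both directions, and that passing to classical probability measures via Example~\ref{example:+-nomeasure} leaves the values $\mu(|\phi|^+)$ and $\mu_\four(|\phi|^\mathsf{x})$ relevant to the induction unchanged, together with confirming that the truncated sum in the base case never actually truncates.
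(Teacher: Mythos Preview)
Your proposal is correct and follows essentially the same route as the paper: reduce to the outer-$\neg$-free form $\alpha^\neg$, identify a $\pm$-model and a $\four$-model over the same underlying $\BD$-frame with the same (classical) measure, and prove by induction on $\alpha^\neg$ that $e_1(\alpha^\neg)=e^\four((\alpha^\neg)^\four)$, with the base case using the disjoint decomposition $|\phi|^+=|\phi|^\purebel\sqcup|\phi|^\conf$ and the inductive step using that the $e_1$-clauses of $\Luksquareorder$ coincide with the $\Luktriangle$ clauses. The only noteworthy difference is that you invoke Lemma~\ref{lemma:LukProbsquareconflation} explicitly to dispose of the $e_2$-component of $\LukProbsquare$-validity, whereas the paper leaves this step implicit; your treatment is in fact cleaner on that point.
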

\begin{proof}
Let w.l.o.g.\ $\mathbb{M}=\langle W,v^+,v^-,\mu,e_1,e_2\rangle$ be a $\BD$-model with $\pm$-probability where $\mu$ is a \emph{classical} probability measure and let $e(\alpha)=(x,y)$. We show that in the $\BD$-model $\mathbb{M}_\four=\langle W,v^+,v^-,\mu,e_1\rangle$ with \emph{four-probability} $\mu$ and $e^\four$ induced by $\mu$, $e^\four((\alpha^\neg)^\four)=x$. This is sufficient to prove the result because if $\alpha^\neg$ is not $\LukProbsquare$-valid, then $\alpha$ is not $\LukProbsquare$-valid either.

We proceed by induction on $\alpha^\neg$. For the basis case, we have that
\begin{align*}
e_1(\Prob\phi)&=\mu(|\phi|^+)\\
&=\mu(|\phi|^\purebel\cup|\phi|^\conf)\\
&=\mu(|\phi|^\purebel)\cup\mu(|\phi|^\conf)\tag{$|\phi|^\purebel$ and $|\phi|^\conf$ are disjoint}\\
&=e^\four(\purebelmod\phi)+e_\four(\conflmod\phi)\tag{$e^\four$ is induced by $\mu$}\\
&=e^\four(\purebelmod\phi\oplus\conflmod\phi)\tag{$e^\four(\purebelmod\phi)+e_\four(\conflmod\phi)\leq1$}
\end{align*}
The cases of connectives can be obtained by an application of the induction hypothesis.

Conversely, since the support of truth conditions in $\Luksquareorder$ coincide with the semantics of $\Luktriangle$ (cf.~Definitions~\ref{def:Lukasiewicz} and~\ref{def:Luk2triangle}) and since $\alpha^\neg$ is outer-$\neg$-free, if $e(\alpha^\neg)<1$ for some $\fourLukProb$-model $\mathbb{M}_\four=\langle W,v^+,v^-,\mu,e\rangle$, then $e_1(\alpha^\neg)<1$ for $\mathbb{M}=\langle W,v^+,v^-,\mu,e_1,e_2\rangle$ with $e=e_1$.

We proceed by induction on $\alpha^\neg$ (recall that $e(\alpha)=e(\alpha^\neg)$ in all $\LukProbsquare$-models). If $\alpha=\Prob\phi$, then $e_1(\Prob\phi)=\mu(|\phi|^+)=\mu(|\phi|^\purebel\cup|\phi|^\conf)$. But $|\phi|^\purebel$ and $|\phi|^\conf$ are disjoint, whence $\mu(|\phi|^\purebel\cup|\phi|^\conf)=\mu(|\phi|^\purebel)+\mu(|\phi|^\conf)$, and since $\mu(|\phi|^\purebel)+\mu(|\phi|^\conf)\leq1$, we have that $e_1(\purebelmod\phi\oplus\conflmod\phi)=\mu(|\phi|^\purebel)+\mu(|\phi|^\conf)=e_1(\Prob\phi)$, as required.

The induction steps are straightforward since the semantic conditions of support of truth in $\Luk^2_\triangle$ coincide with the semantics of $\Luk_\triangle$ (cf.~Definitions~\ref{def:Luk2triangle} and~\ref{def:Lukasiewicz}).
\end{proof}
\begin{theorem}\label{theorem:embeddings2}
$\beta\in\LfourLukProb$ is $\fourLukProb$-valid iff $\beta^\pm$ is $\LukProbsquare$-valid.
\end{theorem}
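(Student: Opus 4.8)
The plan is to reduce everything to the support of truth, exploiting Lemma~\ref{lemma:LukProbsquareconflation}, and then to prove a single semantic identity by induction, mirroring the structure of Theorem~\ref{theorem:embeddings1} but in the opposite direction. First observe that $\beta^\pm$ is \emph{outer-$\neg$-free} in the sense of Convention~\ref{conv:LukProbsquarenegfree}: inspecting Definition~\ref{def:embeddings}, the translation introduces $\neg$ only inside modal atoms (as $\Prob\neg\phi$, $\Prob(\phi\wedge\neg\phi)$, $\Prob(\phi\vee\neg\phi)$), while the connective clauses use only $\triangle$, ${\sim}$, and $\rightarrow$. Consequently the $v_1$ (support-of-truth) clauses of Definition~\ref{def:Luk2triangle} applied to $\beta^\pm$ never invoke the $\neg$-clause and so coincide verbatim with the $\Luktriangle$-semantics of Definition~\ref{def:Lukasiewicz}. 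By Example~\ref{example:+-nomeasure} we may assume w.l.o.g.\ that the probability is a classical measure $\mu$ on $W$, and a single such $\mu$ over a $\BD$-model $\mathfrak{M}$ simultaneously determines a $\fourLukProb$-model (with valuation $e^\four$ induced by reading $\mu$ off the four extensions) and a $\LukProbsquare$-model (with $e_1,e_2$ induced by reading $\mu$ off $|\cdot|^+,|\cdot|^-$). Over this shared structure I would prove
\[e^\four(\beta)=e_1(\beta^\pm)\quad\text{for all }\beta\in\LfourLukProb\]
by induction on $\beta$.

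The base cases are the only real content, and they rest on the set identities $|\phi\wedge\neg\phi|^+=|\phi|^+\cap|\phi|^-=|\phi|^\conf$ and $|\phi\vee\neg\phi|^+=|\phi|^+\cup|\phi|^-$, together with the disjoint decompositions $|\phi|^+=|\phi|^\purebel\sqcup|\phi|^\conf$ and $|\phi|^-=|\phi|^\puredisbel\sqcup|\phi|^\conf$. For instance, since $|\phi|^\conf\subseteq|\phi|^+$ the truncated subtraction is genuine, so $e_1((\purebelmod\phi)^\pm)=\mu(|\phi|^+)\ominus_\Luk\mu(|\phi|^\conf)=\mu(|\phi|^+)-\mu(|\phi|^\conf)=\mu(|\phi|^\purebel)=e^\four(\purebelmod\phi)$; the cases of $\puredisbelmod$ and $\conflmod$ are analogous, and for $\uncertmod$ one computes $e_1({\sim}\Prob(\phi\vee\neg\phi))=1-\mu(|\phi|^+\cup|\phi|^-)=\mu(|\phi|^\uncert)=e^\four(\uncertmod\phi)$. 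For the inductive cases $\beta=\triangle\beta'$, $\beta={\sim}\beta'$, and $\beta=\beta'\rightarrow\beta''$, the claim is immediate from the induction hypothesis together with the observation above that the $v_1$-semantics of these connectives on $\beta^\pm$ agree with the $\Luktriangle$-semantics used to compute $e^\four$.

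Finally I would assemble the two directions. If $\beta$ is not $\fourLukProb$-valid, pick a (w.l.o.g.\ classical) witnessing model with $e^\four(\beta)<1$; the identity gives $e_1(\beta^\pm)<1$ over the associated $\LukProbsquare$-model, so by Lemma~\ref{lemma:LukProbsquareconflation} $\beta^\pm$ is not $\LukProbsquare$-valid. Conversely, if $\beta^\pm$ is not $\LukProbsquare$-valid, then Lemma~\ref{lemma:LukProbsquareconflation} supplies a model with $e_1(\beta^\pm)<1$; the same identity yields $e^\four(\beta)<1$ over the corresponding $\fourLukProb$-model, refuting $\fourLukProb$-validity of $\beta$.

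I expect the main obstacle to be the bookkeeping in the base cases — specifically confirming that each $\ominus_\Luk$ of Definition~\ref{def:embeddings} acts as honest subtraction (which needs the containments $|\phi|^\conf\subseteq|\phi|^+$ and $|\phi|^\conf\subseteq|\phi|^-$) so that the truncation in $a\ominus_\Luk b=\max(0,a-b)$ never triggers; otherwise the translated atoms would undercount the four-valued measures. Everything else is routine once Lemma~\ref{lemma:LukProbsquareconflation} removes the need to track the support of falsity $e_2$ and once the outer-$\neg$-freeness of $\beta^\pm$ licenses identifying its $v_1$-semantics with the plain $\Luktriangle$-semantics.
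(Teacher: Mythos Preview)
Your proposal is correct and follows essentially the same route as the paper: share a single classical probability measure between a $\fourLukProb$-model and a $\LukProbsquare$-model, prove $e^\four(\beta)=e_1(\beta^\pm)$ by induction on $\beta$ using the disjoint decompositions $|\phi|^+=|\phi|^\purebel\sqcup|\phi|^\conf$ and $|\phi\wedge\neg\phi|^+=|\phi|^\conf$ for the base cases, and invoke Lemma~\ref{lemma:LukProbsquareconflation} to reduce $\LukProbsquare$-validity to the $e_1$-coordinate. Your explicit remarks that $\beta^\pm$ is outer-$\neg$-free and that the $\ominus_\Luk$'s are honest subtractions (via $|\phi|^\conf\subseteq|\phi|^+$, $|\phi|^\conf\subseteq|\phi|^-$) are useful bookkeeping the paper leaves implicit, but the argument is the same.
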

\begin{proof}
Assume w.l.o.g.\ that $\mathbb{M}=\langle W,v^+,v^-,\mu_\four,e\rangle$ is a $\BD$-model with a~$\four$-probability where $\mu_\four$ is a classical probability measure and $e(\beta)=x$. It suffices to define a $\BD$-model with $\pm$-probability $\mathbb{M}^\pm=\langle W,v^+,v^-,\mu_\four,e_1,e_2\rangle$ and show that $e_1(\beta^\pm)=x$. If $e(\beta)<1$, then $e(\beta^\pm)<1$ (and thus, is not valid). If $\beta^\pm$ is not valid, we have that $e_1(\beta^\pm)<1$ by Lemma~\ref{lemma:LukProbsquareconflation}, whence $e(\beta)<1$, as well.

We proceed by induction on $\beta$. If $\beta=\purebelmod\phi$, then $e(\purebelmod\phi)=\mu_\four(|\phi|^\purebel)$. Now observe that $\mu_\four(|\phi|^+)=\mu(|\phi|^\purebel\cup|\phi|^\conf)=\mu_\four(|\phi|^\purebel)+\mu_\four(|\phi|^\conf)$ since $|\phi|^\purebel$ and $|\phi|^\conf$ are disjoint. But $\mu_\four(|\phi|^+)\!=\!e_1(\Prob\phi)$ and $\mu_\four(|\phi|^\conf)\!=\!\mu_\four(|\phi\!\wedge\!\neg\phi|^+)$ since $|\phi\!\wedge\!\neg\phi|^+\!=\!|\phi|^\conf$. Thus, $\mu_\four(|\phi|^\purebel)=e_1(\Prob\phi\ominus\Prob(\phi\wedge\neg\phi))$ as required.

Other basis cases of $\conflmod\phi$, $\uncertmod\phi$, and $\puredisbelmod\phi$ can be tackled similarly. The induction steps are straightforward since the support of truth in $\Luk^2_\triangle$ coincides with semantical conditions in~$\Luk_\triangle$.
\end{proof}

We finish the section with a straightforward observation.
\begin{lemma}\label{lemma:lineartranslationprobability}
Let $\alpha\in\LLukProbsquare$ and $\beta\in\LfourLukProb$. Then $\lmc((\alpha^\neg)^\four)=\Omc(\lmc(\alpha))$ and $\lmc(\beta)=\Omc(\lmc(\beta^\pm))$.
\end{lemma}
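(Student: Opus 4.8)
The plan is to prove both length bounds by induction on formula structure, exploiting the fact that the embeddings in Definition~\ref{def:embeddings} replace each modal atom with a fixed finite gadget and leave the connectives $\triangle$, ${\sim}$, and $\rightarrow$ essentially untouched. The key observation is that both translations are \emph{compositional} and \emph{local}: the connective clauses merely commute the translation past $\heartsuit\in\{\triangle,{\sim}\}$ and past $\rightarrow$, contributing only a constant additive overhead per connective, while all the genuine expansion happens at the modal atoms, where a single atom is replaced by a formula of bounded size.

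First I would handle $\lmc((\alpha^\neg)^\four)=\Omc(\lmc(\alpha))$. I would note that passing from $\alpha$ to $\alpha^\neg$ (Definition~\ref{def:positiveNNFs}) only pushes outer negations inward; each rewrite step either deletes symbols ($\neg\neg\alpha\rightsquigarrow\alpha$) or replaces a bounded pattern by another bounded pattern, and since each application strictly reduces the number of outer-layer $\neg$-occurrences that sit above a connective, the total length of $\alpha^\neg$ is $\Omc(\lmc(\alpha))$. Then for the $(\cdot)^\four$ map itself I would argue by induction: a modal atom $\Prob\phi$ becomes $\purebelmod\phi\oplus\conflmod\phi$, which has length $\lmc(\phi)+c$ for a constant $c$, so it is linear in the size of the atom; and for each connective the translation adds only a constant number of symbols on top of the translations of the immediate subformulas. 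Summing the per-node constant overhead over the (linearly many) nodes of $\alpha^\neg$ yields a linear bound in $\lmc(\alpha^\neg)=\Omc(\lmc(\alpha))$, hence in $\lmc(\alpha)$.

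The second bound $\lmc(\beta)=\Omc(\lmc(\beta^\pm))$ goes by the same template but is phrased in the \emph{opposite} direction, bounding the shorter formula in terms of the longer. Here the atom clauses of $(\cdot)^\pm$ replace each of $\purebelmod\phi,\puredisbelmod\phi,\conflmod\phi,\uncertmod\phi$ by a formula built from $\Prob\phi$, $\Prob\neg\phi$, $\Prob(\phi\wedge\neg\phi)$, and $\Prob(\phi\vee\neg\phi)$; each such atom $\heartsuit\phi$ has length $\lmc(\phi)+c$ while its image has length $\Omc(\lmc(\phi))$, so the image is at least as long as the original atom up to a constant factor. Since the connective clauses are again length-faithful up to a constant per node, an induction shows $\lmc(\beta)\leq C\cdot\lmc(\beta^\pm)$ for a suitable constant $C$, i.e.\ $\lmc(\beta)=\Omc(\lmc(\beta^\pm))$.

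There is no serious obstacle here; the statement is explicitly called a ``straightforward observation,'' and the only points requiring minor care are (i) checking that the normalisation $\alpha\rightsquigarrow\alpha^\neg$ does not blow up the length superlinearly, which follows because the rewrite rules do not duplicate subformulas, and (ii) getting the \emph{direction} of each inequality right, since the two claims bound length in opposite directions. Both reduce to the fact that each translation clause is size-linear at the atoms and size-faithful-up-to-a-constant at the connectives, so the induction on subformulas closes immediately.
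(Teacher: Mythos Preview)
Your argument for the first bound $\lmc((\alpha^\neg)^\four)=\Omc(\lmc(\alpha))$ matches the paper's: first note that $\alpha\mapsto\alpha^\neg$ is linear because the rewrites in Definition~\ref{def:positiveNNFs} do not duplicate subformulas, then handle $(\cdot)^\four$ by induction with a linear atom clause and size-faithful connective clauses. One small slip: $(\Prob\phi)^\four=\purebelmod\phi\oplus\conflmod\phi$ contains $\phi$ \emph{twice}, so its length is $2\lmc(\phi)+c$ (the paper computes $2\lmc(\phi)+7$), not $\lmc(\phi)+c$ as you write; the asymptotic conclusion is unaffected.

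For the second bound you read the statement literally as $\lmc(\beta)=\Omc(\lmc(\beta^\pm))$ and prove that $(\cdot)^\pm$ does not \emph{shrink} formulas. That argument is essentially fine (each atom image contains at least one copy of $\phi$, and the connective clauses are length-preserving up to constants), though your phrasing ``its image has length $\Omc(\lmc(\phi))$, so the image is at least as long'' mixes up upper and lower bounds; you need $\lmc((\mathsf{X}\phi)^\pm)\geq\lmc(\phi)$, not an $\Omc$-bound. More importantly, the paper's own proof establishes the \emph{converse} direction $\lmc(\beta^\pm)=\Omc(\lmc(\beta))$: it computes, e.g., $\lmc((\purebelmod\phi)^\pm)=3\lmc(\phi)+12$ and then runs the same induction on connectives as for $(\alpha^\neg)^\four$. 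That converse is the direction actually used in Theorem~\ref{theorem:LukProbnpcompleteness} to transfer the $\conp$ upper bound across the embedding, so the stated inequality appears to be a typo in the lemma. In short, you correctly proved what is written, but the paper proves (and needs) the reverse inequality, via exactly the template you used for the first claim.
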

\begin{proof}
To simplify the presentation of the proof, we will further assume that $\lmc(\mathsf{Y}\phi)=\lmc(\phi)+2$ for every $\mathsf{Y}\in\{\Prob,\purebelmod,\conflmod,\uncertmod,\puredisbelmod\}$. We begin with $\alpha$. From Definition~\ref{def:positiveNNFs}, it is clear that $\lmc(\alpha^\neg)=\Omc(\lmc(\alpha))$. It remains to show by induction on $\alpha^\neg$ that $\lmc((\alpha^\neg)^\four)=\Omc(\lmc(\alpha^\neg))$.

The basis case of $\alpha^\neg=\Prob\phi$ is simple: $\lmc(\Prob\phi)=\lmc(\phi)+2$, whence
\begin{align*}
\lmc((\Prob\phi)^\four)=\lmc(\purebelmod\phi\oplus\conflmod\phi)=2\cdot\lmc(\phi)+7=\Omc(\lmc(\alpha^\neg))
\end{align*}
Here, $7$ is the sum of lengths of modalities, $\oplus$, and outer brackets in $(\Prob\phi)^\four$.

Now let $\alpha^\neg=(\alpha_1\rightarrow\alpha_2)$. Then $\lmc(\alpha^\neg)=\lmc(\alpha_1)+\lmc(\alpha_2)+3$ (we count outer brackets here). Since $(\alpha_1\rightarrow\alpha_2)^\four=\alpha^\four_1\rightarrow\alpha^\four_2$, we have $\lmc(\alpha^\four_1\rightarrow\alpha^\four_2)=\lmc(\alpha^\four_1)+\lmc(\alpha^\four_2)+3$. We apply the induction hypothesis and obtain that $\lmc(\alpha^\four_1\rightarrow\alpha^\four_2)=\Omc(\lmc(\alpha_1))+\Omc(\lmc(\alpha_2))+3=\Omc(\lmc(\alpha_1\rightarrow\alpha_2))$.

The cases of $\alpha=\triangle\alpha'$ and $\alpha={\sim}\alpha'$ can be dealt with in the same manner.

Consider now $\beta\in\LfourLukProb$. We show by induction on $\beta$ that $\lmc(\beta^\pm)=\Omc(\lmc(\beta))$. Assume that $\beta=\purebelmod\phi$. Then $(\purebelmod\phi)^\pm=\Prob\phi\ominus\Prob(\phi\wedge\neg\phi)$. Observe that $\Prob\phi\ominus\Prob(\phi\wedge\neg\phi)$ is a~shorthand for ${\sim}(\Prob\phi\rightarrow\Prob(\phi\wedge\neg\phi))$. Thus, we have
\begin{align*}
\lmc({\sim}(\Prob\phi\rightarrow\Prob(\phi\wedge\neg\phi)))=3\cdot\lmc(\phi)+12=\Omc(\lmc(\purebelmod\phi))
\end{align*}
Note again that $12$ is the sum of the lengths of modalities, $\wedge$, $\neg$, $\rightarrow$, $\sim$, and brackets in ${\sim}(\Prob\phi\rightarrow\Prob(\phi\wedge\neg\phi))$. The cases of other modal atoms are similar. The cases of $\beta=\beta_1\rightarrow\beta_2$, $\beta=\triangle\beta'$, and $\beta={\sim}\beta'$ can be considered in the same way as we did $(\alpha^\neg)^\four$. The result now follows since modalities do not nest.
\end{proof}
\subsection{Axiomatisations}
In this section, we present Hilbert-style calculi $\HLukProbsquare$ and $\HfourLukProb$ that axiomatise $\LukProbsquare$ and $\fourLukProb$. The completeness of $\HLukProbsquare$ w.r.t.\ finite theories ($\Luk$ is not compact, whence there are no finite calculi for expansions of $\Luk$ complete w.r.t.\ countable theories) was established by~\cite{BilkovaFrittellaKozhemiachenkoMajerNazari2023APAL}. Here, we prove the completeness of $\HfourLukProb$.

To construct the calculi we translate the conditions on measures from Definitions~\ref{def:measuredmodel} and~\ref{def:4model} into $\LLukProbsquare$ and $\LfourLukProb$ formulas. We then use these translated conditions as additional axioms that extend Hilbert-style calculi for $\Luktriangle$ and $\Luksquareorder$.

To facilitate the presentation, we recall these Hilbert calculi for $\Luktriangle$. The axiomatisation of $\Luktriangle$ can be obtained by adding $\triangle$-axioms and rules from~\cite{Baaz1996}, \cite[Definition~2.4.5]{Hajek1998}, or~\cite[Chapter~I,2.2.1]{BehounekCintulaHajek2011MFL1} to the Hilbert-style calculus for $\Luk$ from~\cite[\S6.2]{MetcalfeOlivettiGabbay2008}.
\begin{definition}[$\HLuktriangle$ --- the Hilbert-style calculus for $\Luk_\triangle$]\label{def:HLuk}
The calculus contains the following axioms and rules.
\begin{description}
\item[$\mathbf{w}$:] $\phi\rightarrow(\chi\rightarrow\phi)$.
\item[$\mathbf{sf}$:] $(\phi\rightarrow\chi)\rightarrow((\chi\rightarrow\psi)\rightarrow(\phi\rightarrow\psi))$.
\item[$\mathbf{waj}$:] $((\phi\rightarrow\chi)\rightarrow\chi)\rightarrow((\chi\rightarrow\phi)\rightarrow\phi)$.
\item[$\mathbf{co}$:] $({\sim}\chi\rightarrow{\sim}\phi)\rightarrow(\phi\rightarrow\chi)$.
\item[$\mathbf{MP}$:] $\dfrac{\phi\quad\phi\rightarrow\chi}{\chi}$.
\item[$\triangle1$:] $\triangle\phi\vee {\sim}\triangle\phi$.
\item[$\triangle2$:] $\triangle\phi\rightarrow\phi$.
\item[$\triangle3$:] $\triangle\phi\rightarrow\triangle\triangle\phi$.
\item[$\triangle4$:] $\triangle(\phi\vee\chi)\rightarrow\triangle\phi\vee\triangle\chi$.
\item[$\triangle5$:] $\triangle(\phi\rightarrow\chi)\rightarrow\triangle\phi\rightarrow\triangle\chi$.
\item[$\triangle\mathbf{nec}$:] $\dfrac{\phi}{\triangle\phi}$.
\end{description}
\end{definition}
\begin{proposition}[Finite strong completeness of $\HLuktriangle$]\label{prop:HLuktrianglecompleteness}
Let $\Gamma$ be finite. Then $\Gamma\models_{\Luktriangle}\phi$ iff $\Gamma\vdash_{\HLuktriangle}\phi$.
\end{proposition}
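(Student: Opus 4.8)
The plan is to treat soundness as routine and to concentrate on completeness, which I split into a reduction to the theoremhood case plus an algebraic argument. For soundness, I would verify directly from Definitions~\ref{def:Lukalgebra} and~\ref{def:Lukasiewicz} that every axiom of $\HLuktriangle$ takes value $1$ under each valuation $v$ and that $\mathbf{MP}$ and $\triangle\mathbf{nec}$ preserve the property of being $1$ on all models of $\Gamma$; note that $\triangle\mathbf{nec}$ is sound for the $1$-preserving consequence because $v(\phi)=1$ forces $\triangle_\Luk v(\phi)=1$. The $\Luk$-fragment axioms $\mathbf{w}$, $\mathbf{sf}$, $\mathbf{waj}$, $\mathbf{co}$ are the Metcalfe--Olivetti--Gabbay axioms, and the $\triangle$-axioms $\triangle1$--$\triangle5$ are checked against $\triangle_\Luk$, using only that $\triangle_\Luk$ maps $1$ to $1$ and everything else to $0$.

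For completeness I would first establish the $\triangle$-form of the deduction theorem for $\HLuktriangle$,
\[
\Gamma,\phi\vdash_{\HLuktriangle}\chi \quad\Longleftrightarrow\quad \Gamma\vdash_{\HLuktriangle}\triangle\phi\rightarrow\chi,
\]
which holds precisely because of $\triangle\mathbf{nec}$, $\triangle2$, $\triangle3$, $\triangle5$, and $\mathbf{MP}$ on top of the $\Luk$ deduction machinery (cf.~\cite[Def.~2.4.5]{Hajek1998}). Since $\Gamma$ is finite, iterating this collapses finite strong completeness to weak (theoremhood) completeness: writing $\Gamma=\{\gamma_1,\dots,\gamma_n\}$, one has $\Gamma\vdash_{\HLuktriangle}\phi$ iff $\vdash_{\HLuktriangle}\triangle\gamma_1\rightarrow(\cdots\rightarrow(\triangle\gamma_n\rightarrow\phi))$, while on the semantic side $\Gamma\models_{\Luktriangle}\phi$ iff that same iterated implication is $\Luktriangle$-valid --- here one uses that each $v(\triangle\gamma_i)\in\{0,1\}$ makes the nested antecedents behave classically, so the implication is valid exactly when every $v$ satisfying all $\gamma_i$ satisfies $\phi$. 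Thus it remains to prove that $\models_{\Luktriangle}\psi$ implies $\vdash_{\HLuktriangle}\psi$.

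For this weak-completeness step I would use the algebraic route. The Lindenbaum--Tarski quotient of $\HLuktriangle$ is the free $\mathsf{MV}_\triangle$-algebra on the variables, in which the provable formulas form the top congruence class; by the subdirect representation theorem for the variety of $\mathsf{MV}_\triangle$-algebras, any non-theorem $\psi$ is separated from $1$ by a homomorphism onto a linearly ordered $\mathsf{MV}_\triangle$-algebra (an $\mathsf{MV}_\triangle$-chain). I then invoke the standard (Chang) completeness of $\Luk$ as presented in~\cite[\S6.2]{MetcalfeOlivettiGabbay2008}: every such chain embeds, as an MV-algebra, into the standard $[0,1]$ algebra of Definition~\ref{def:Lukalgebra}. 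The point that makes this transfer clean is that on any $\mathsf{MV}_\triangle$-chain $\triangle$ is rigidly forced to send $1\mapsto1$ and every other element to $0$, which is exactly $\triangle_\Luk$; hence the MV-embedding automatically preserves $\triangle$, and composing yields a $[0,1]$-valuation $v$ with $v(\psi)<1$, refuting $\Luktriangle$-validity.

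The step requiring the most care --- and the main obstacle --- is precisely this compatibility of the MV-standard-completeness machinery with the enriched $\triangle$-signature: one must be sure that the embedding of chains into $[0,1]$ does not disturb $\triangle$. The resolution is the rigidity observation above, so that no independent verification beyond the pure MV case is needed; this is the content of the $\triangle$-completeness results of~\cite{Baaz1996} and~\cite[Chapter~I,2.2.1]{BehounekCintulaHajek2011MFL1}, on which I would ultimately lean to package the weak-completeness conclusion for $\Luktriangle$.
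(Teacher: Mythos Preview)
The paper does not actually prove this proposition: it is stated and immediately attributed to \cite{Baaz1996}, \cite[Definition~2.4.5]{Hajek1998}, and \cite[Chapter~I,2.2.1]{BehounekCintulaHajek2011MFL1}. Your sketch is precisely the standard argument from those references --- $\triangle$-deduction theorem to reduce finite strong completeness to weak completeness, then an algebraic weak-completeness proof via Lindenbaum algebra, subdirect decomposition into chains, and standard completeness of $\Luk$ --- so there is no divergence to discuss.

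One technical slip is worth flagging. You write that ``every such chain embeds, as an MV-algebra, into the standard $[0,1]$ algebra''. This is false in general: non-archimedean MV-chains (for instance, a nontrivial ultrapower of $[0,1]$, or the Chang chain) do not embed into $[0,1]$. What Chang's theorem actually yields is that the variety of MV-algebras is generated by $[0,1]$, so an identity failing in some MV-chain already fails in $[0,1]$; the passage goes through the theory of ordered abelian groups rather than a direct embedding. Your rigidity observation --- that on any chain $\triangle$ is forced to be the characteristic function of $\{1\}$ --- is correct and is exactly what lets one lift the pure-$\Luk$ standard completeness to $\Luk_\triangle$ (since any MV-embedding of chains, being injective and unital, automatically respects this $\triangle$), and indeed this is the mechanism in the Baaz/H\'ajek proofs you cite. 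But the sentence claiming a blanket embedding of chains into $[0,1]$ should be replaced by an appeal to the generation-of-the-variety form of Chang completeness, or simply by invoking the cited standard-completeness results for $\Luk_\triangle$ directly.
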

The calculus for $\Luksquareorder$ is acquired by adding the axioms for $\neg$ (cf.~\cite{BilkovaFrittellaKozhemiachenkoMajerNazari2023APAL} for details).
\begin{definition}[$\HLuksquareorder$ --- Hilbert-style calculus for $\Luksquareorder$]\label{def:HLuksquareorder}
The calculus expands $\HLuktriangle$ with the following axioms and rules.
\begin{description}
\item[$\neg\neg$:] $\neg\neg\phi\leftrightarrow\phi$.
\item[$\neg{\sim}$:] $\neg{\sim}\phi\leftrightarrow{\sim}\neg\phi$.
\item[${\sim}\neg\!\rightarrow$:] $({\sim}\neg\phi\rightarrow{\sim}\neg\chi) \leftrightarrow {\sim}\neg(\phi\rightarrow\chi)$.
\item[$\neg\triangle$:] $\neg\triangle\phi\leftrightarrow{\sim}\triangle{\sim}\neg\phi$.
\item[$\mathbf{conf}$:] $\dfrac{\phi}{{\sim}\neg\phi}$.
\end{description}
\end{definition}

The completeness of $\HLuksquareorder$ was shown in~\cite[Lemma~4.16]{BilkovaFrittellaKozhemiachenkoMajerNazari2023APAL} (there, the logic is called $\Luk^2$).

The calculi for $\LukProbsquare$ and $\fourLukProb$ are as follows.
\begin{definition}[$\HLukProbsquare$ --- a~Hilbert-style calculus for $\LukProbsquare$]\label{def:HLukProbsquare}
The calculus has the following axioms and rules.
\begin{description}
\item[$\Luksquareorder$:] $\Luksquareorder$-valid formulas and $\HLuksquareorder$ rules instantiated in $\LLukProbsquare$;
\item[$\pm\mathsf{mon}$:] $\Prob\phi\rightarrow\Prob\chi$ if $\phi\models_{\BD}\chi$;
\item[$\pm\mathsf{neg}$:] $\Prob\neg\phi\leftrightarrow\neg\Prob\phi$;
\item[$\pm\mathsf{ex}$:] $\Prob(\phi\vee\chi)\leftrightarrow((\Prob\phi\ominus\Prob(\phi\wedge\chi))\oplus\Prob\chi)$. 
\end{description}
\end{definition}

It is important to note that $\pm\mathsf{neg}$ \emph{is not} connected to the $\pm$-probability of the complement of an event. Indeed, it contains only $\BD$ negations and is, thus, a~translation of the $\mathbf{neg}$ property from Definition~\ref{def:measuredmodel}.
\begin{proposition}[Finite strong completeness of $\HLukProbsquare$~{\cite[Theorem~4.24]{BilkovaFrittellaKozhemiachenkoMajerNazari2023APAL}}]\label{prop:HLukProbsquarecompleteness}
Let $\Xi\subseteq\LLukProbsquare$ be finite. Then $\Xi\models_{\LukProbsquare}\alpha$ iff $\Xi\vdash_{\HLukProbsquare}\alpha$.
\end{proposition}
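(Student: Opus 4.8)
The plan is to prove the two halves separately, with soundness being routine and completeness carrying all the weight. For soundness, note that each of the three probability axioms of Definition~\ref{def:HLukProbsquare} is a literal encoding into $\LLukProbsquare$ of one of the defining conditions of a $\pm$-probability from Definition~\ref{def:measuredmodel}: $\pm\mathsf{mon}$ encodes $\mathbf{mon}$, $\pm\mathsf{neg}$ encodes $\mathbf{neg}$, and $\pm\mathsf{ex}$ encodes $\mathbf{ex}$, using that $\ominus,\oplus$ compute truncated subtraction and addition and that, as argued in Example~\ref{example:+-nomeasure}, we may assume $\mu$ is a classical measure so no truncation actually occurs. Since $e_1,e_2$ are by definition $\Luksquareorder$-valuations, every $\Luksquareorder$-valid formula receives $(1,0)$ and the $\HLuksquareorder$ rules preserve validity; hence every $\alpha$ derivable from $\Xi$ is $\LukProbsquare$-valid, giving $\vdash\ \Rightarrow\ \models$.

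For completeness I would argue contrapositively using the standard two-layered strategy (cf.~\cite{CintulaNoguera2014}). Suppose $\Xi\nvdash_{\HLukProbsquare}\alpha$, and treat each modal atom $\Prob\psi$ occurring in $\Xi\cup\{\alpha\}$ as a fresh propositional variable of $\Luksquareorder$. Because $\Xi\cup\{\alpha\}$ contains only finitely many propositional variables and $\BD$ has only finitely many formulas over them up to $\BD$-equivalence, the set $\Delta$ of relevant instances of $\pm\mathsf{mon}$, $\pm\mathsf{neg}$, $\pm\mathsf{ex}$ (those whose $\BD$-constituents come from this finite pool of representatives) can be taken \emph{finite}; this is what lets us sidestep the failure of compactness of $\Luk$. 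Since the probability axioms are precisely the instances in $\Delta$, we get $\Xi\nvdash_{\HLukProbsquare}\alpha$ iff $\Xi\cup\Delta\nvdash_{\HLuksquareorder}\alpha$ in the purely outer calculus, so by finite strong completeness of $\HLuksquareorder$ (an extension of $\HLuktriangle$, cf.\ Proposition~\ref{prop:HLuktrianglecompleteness}) there is a $\Luksquareorder$-valuation $e=(e_1,e_2)$ with $e(\gamma)=(1,0)$ for all $\gamma\in\Xi\cup\Delta$ and $e(\alpha)\neq(1,0)$. By Lemma~\ref{lemma:LukProbsquareconflation} we may concentrate on the support-of-truth component $e_1$.

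The remaining and hardest task is the \emph{realisation step}: turning the abstract assignment $\psi_i\mapsto e_1(\Prob\psi_i)$ into a genuine $\BD$-model carrying a $\pm$-probability that induces it. I would take as states the $4^m$ Belnapian valuations of the $m$ variables occurring in $\Xi\cup\{\alpha\}$, so that each positive extension $|\psi_i|^+$ becomes a concrete union of states and the lattice $\{|\psi|^+\}$ is closed under $\cup$ and $\cap$ (with $|\psi\vee\chi|^+=|\psi|^+\cup|\chi|^+$ and $|\psi\wedge\chi|^+=|\psi|^+\cap|\chi|^+$). That $e_1$ satisfies every instance in $\Delta$ says exactly that the set function $|\psi|^+\mapsto e_1(\Prob\psi)$ is monotone and modular (inclusion--exclusion) on this finite lattice, with the negation clause tying $|\neg\psi|^+=|\psi|^-$ to $e_2$. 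By a Horn--Tarski / linear-programming argument this is the condition guaranteeing that the system ``$\mu(\{w\})\geq0$ for every state $w$ and $\sum_{w\in|\psi_i|^+}\mu(\{w\})=e_1(\Prob\psi_i)$ for every $i$'' admits a non-negative solution; any such solution is a classical probability measure whose induced $\pm$-probability realises $e_1$ on all relevant atoms, and hence on all of $\Xi\cup\{\alpha\}$ since modalities do not nest. The resulting tuple is a $\LukProbsquare$-model witnessing $\Xi\nvDash_{\LukProbsquare}\alpha$, which closes the contrapositive.

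The main obstacle is precisely this representation lemma: verifying that the finitely many linear constraints imposed by $\Delta$ are not merely necessary but \emph{sufficient} for the existence of an inducing measure. I expect to discharge it either through the FHM-style convex-duality argument (the constraints cut out exactly the convex hull of the point-mass valuations) or by directly invoking the correspondence between $\pm$-probabilities and classical measures from~\cite{KleinMajerRad2021} recalled in Example~\ref{example:+-nomeasure}. The $\mathbf{neg}$ condition needs a little care, since the negative extensions are not complements of the positive ones, but it is absorbed cleanly by working throughout with the single finite algebra generated by all literals of the $\psi_i$.
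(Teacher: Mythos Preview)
The paper does not actually prove this proposition: it is stated with a citation to~\cite[Theorem~4.24]{BilkovaFrittellaKozhemiachenkoMajerNazari2023APAL} and no argument is given here. The closest thing to a ``paper's own proof'' is the proof of the analogous Theorem~\ref{theorem:completeness} for $\HfourLukProb$, and your proposal follows that template almost exactly: reduce to outer-layer completeness by adjoining the (finitely many, thanks to tabularity of $\BD$) relevant instances of the probabilistic axioms, obtain a falsifying $\Luksquareorder$-valuation, and then build the canonical $\BD$-model on $2^{\Lit}=4^m$ states.

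One point worth flagging. In Theorem~\ref{theorem:completeness} the realisation step is trivial because every clause of Definition~\ref{def:4model} speaks only about extensions of formulas, so any extension of $\mu_\four$ to $2^W$ works. For $\pm$-probabilities this shortcut is not available: $\mathbf{mon}$ in Definition~\ref{def:measuredmodel} quantifies over \emph{all} subsets, so you genuinely need the representation argument you sketch (finding a classical measure on the canonical carrier whose marginals on $|\psi|^+$ agree with $e_1(\Prob\psi)$). Your plan to discharge it via a Horn--Tarski/LP feasibility argument, or equivalently via the $\pm$-probability/classical-measure correspondence of~\cite{KleinMajerRad2021}, is the right one; just be aware that your appeal to Lemma~\ref{lemma:LukProbsquareconflation} is a slight overreach --- that lemma concerns validity, not entailment from $\Xi$ --- and what actually lets you work with $e_1$ alone is the axiom $\pm\mathsf{neg}$, which forces $e_2(\Prob\phi)=e_1(\Prob\neg\phi)$ so that realising $e_1$ on the literal-closed family of atoms automatically realises $e_2$.
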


\begin{definition}[$\HfourLukProb$ --- Hilbert-style calculus for $\fourLukProb$]\label{def:HfourLukProb}
The calculus $\HfourLukProb$ consists of the following axioms and rules.
\begin{description}
\item[$\Luktriangle$:] $\Luktriangle$-valid formulas and sound rules instantiated in $\LfourLukProb$.
\item[$\four\mathsf{equiv}$:] $\mathsf{X}\phi\leftrightarrow\mathsf{X}\chi$ for every $\phi,\chi\in\LBD$ s.t.\ $\phi\dashv\vdash\chi$ is $\BD$-valid and $\mathsf{X}\in\{\purebelmod,\puredisbelmod,\conflmod,\uncertmod\}$.
\item[$\four\mathsf{contr}$:] ${\sim}\purebelmod(\phi\wedge\neg\phi)$; $\conflmod\phi\leftrightarrow\conflmod(\phi\wedge\neg\phi)$.
\item[$\four\mathsf{neg}$:] $\purebelmod\neg\phi\leftrightarrow\puredisbelmod\phi$; $\conflmod\neg\phi\leftrightarrow\conflmod\phi$.
\item[$\four\mathsf{mon}$:] $(\purebelmod\phi\oplus\conflmod\phi)\rightarrow(\purebelmod\chi\oplus\conflmod\chi)$ for every $\phi,\chi\in\LBD$ s.t.\ $\phi\vdash\chi$ is $\BD$-valid.
\item[$\four\mathsf{part1}$:] $\purebelmod\phi\oplus\puredisbelmod\phi\oplus\conflmod\phi\oplus\uncertmod\phi$.
\item[$\four\mathsf{part2}$:] $((\mathsf{X}_1\phi\oplus\mathsf{X}_2\phi\oplus\mathsf{X}_3\phi\oplus\mathsf{X}_4\phi)\ominus\mathsf{X}_4\phi)\leftrightarrow(\mathsf{X}_1\phi\oplus\mathsf{X}_2\phi\oplus\mathsf{X}_3\phi)$ with $\mathsf{X}_i\neq\mathsf{X}_j$, $\mathsf{X}_i\in\{\purebelmod,\puredisbelmod,\conflmod,\uncertmod\}$.
\item[$\four\mathsf{ex}$:] 
$(\purebelmod(\phi\vee\chi)\oplus\conflmod(\phi\vee\chi))\leftrightarrow((\purebelmod\phi\oplus\conflmod\phi)\ominus(\purebelmod(\phi\wedge\chi)\oplus\conflmod(\phi\wedge\chi))\oplus(\purebelmod\chi\oplus\conflmod\chi))$.
\end{description}
\end{definition}
As one can see, the axioms in the calculi above are translations of properties from Definitions~\ref{def:measuredmodel} and~\ref{def:4model}. In $\HfourLukProb$, we split $\four\mathbf{part}$ in two axioms to ensure that the values of $\purebelmod\phi$, $\puredisbelmod\phi$, $\conflmod\phi$, and $\uncertmod\phi$ sum up exactly to $1$. Note, moreover, that $\four\mathsf{mon}$ and $\four\mathsf{ex}$ are translations of $\pm\mathsf{mon}$ and $\pm\mathsf{ex}$ (cf.~Definition~\ref{def:embeddings}). However, since other $\HfourLukProb$-axioms cannot be obtained as translations of $\HLukProbsquare$-axioms (nor, in fact, \emph{any} $\LukProbsquare$-valid formulas), soundness and completeness of $\HfourLukProb$ cannot be established as a~consequence of Theorem~\ref{theorem:embeddings1} and completeness of $\HLukProbsquare$ (Proposition~\ref{prop:HLukProbsquarecompleteness}).

In the remainder of this section, we prove the completeness of $\HfourLukProb$ w.r.t.\ finite theories.
\begin{theorem}\label{theorem:completeness}
Let $\Xi\subseteq\LfourLukProb$ be finite. Then $\Xi\models_{\fourLukProb}\alpha$ iff $\Xi\vdash_{\HfourLukProb}\alpha$.
\end{theorem}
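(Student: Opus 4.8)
The plan is to prove finite strong completeness of $\HfourLukProb$ via a Lindenbaum-style construction combined with the completeness we already have for $\Luktriangle$ (Proposition~\ref{prop:HLuktrianglecompleteness}). Soundness is the easy direction: each axiom $\four\mathsf{equiv}$, $\four\mathsf{contr}$, $\four\mathsf{neg}$, $\four\mathsf{mon}$, $\four\mathsf{part1}$, $\four\mathsf{part2}$, $\four\mathsf{ex}$ is a direct transcription of the corresponding clause of Definition~\ref{def:4model}, so one checks that each holds under any $e$ induced by a $\four$-probability, and $\Luktriangle$-valid formulas and sound rules are sound by construction. The substance is in the completeness direction: from $\Xi\nvdash_{\HfourLukProb}\alpha$ I must build a $\fourLukProb$-model $\mathbb{M}=\langle\mathfrak{M},\mu_\four,e\rangle$ with $e(\gamma)=1$ for all $\gamma\in\Xi$ and $e(\alpha)\neq1$.

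\textbf{From outer-layer consistency to a }$\Luktriangle$\textbf{ valuation.}
First I would treat each modal atom $\mathsf{X}\phi$ (for $\mathsf{X}\in\{\purebelmod,\puredisbelmod,\conflmod,\uncertmod\}$ and $\phi\in\LBD$) occurring in $\Xi\cup\{\alpha\}$ as a fresh propositional variable, so that $\Xi$ and $\alpha$ become ordinary $\Luktriangle$-formulas over these atoms. Since the $\four$-axioms are added on top of $\Luktriangle$, the assumption $\Xi\nvdash_{\HfourLukProb}\alpha$ means $\Xi$ together with the (finitely many, once we fix the finite set $\Sf$ of relevant $\BD$-subformulas) relevant instances of the $\four$-axioms does not $\Luktriangle$-prove $\alpha$. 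By finite strong completeness of $\HLuktriangle$ there is a $\Luktriangle$ valuation $e$ on these atoms satisfying $\Xi$ and all the relevant axiom instances, with $e(\alpha)\neq1$. The remaining task is purely representational: realise $e$ on modal atoms as coming from an actual $\four$-probability on an actual $\BD$-model.

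\textbf{Representation: building the }$\BD$\textbf{-model and the measure.}
This is where I expect the main obstacle. The values $e(\mathsf{X}\phi)$ are real numbers in $[0,1]$ constrained by the axioms; I must produce $\langle W,v^+,v^-\rangle$ and a $\four$-probability $\mu_\four$ so that $\mu_\four(|\phi|^{\mathsf{x}})=e(\mathsf{X}\phi)$ for every relevant $\phi$. Because $\four\mathsf{part1}$ and $\four\mathsf{part2}$ force $e(\purebelmod\phi)+e(\puredisbelmod\phi)+e(\conflmod\phi)+e(\uncertmod\phi)=1$ exactly, and $\four\mathsf{mon}$, $\four\mathsf{ex}$ encode monotonicity and inclusion--exclusion for the belief-plus-conflict mass while $\four\mathsf{neg}$, $\four\mathsf{contr}$, $\four\mathsf{equiv}$ pin down the behaviour under negation, contradiction, and $\BD$-equivalence, the constraints on $\{e(\mathsf{X}\phi)\}$ match exactly the defining conditions of a $\four$-probability restricted to the finite Boolean-like algebra generated by $\Sf$. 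The natural route is to take the canonical states to be the $\BD$-valuations (or the atoms of the finite distributive lattice generated by $\Lit(\Xi\cup\{\alpha\})$ under the $\BD$ clauses) and solve for a probability assignment $\mu$ on them reproducing the prescribed values; the faithful translation into $\pm$-probabilities (Theorems~\ref{theorem:embeddings1} and~\ref{theorem:embeddings2}) together with the equivalence of $\four$- and $\pm$-probabilities recalled from~\cite{KleinMajerRad2021} can be leveraged so that I only have to realise a $\pm$-probability, which is governed by the simpler conditions $\mathbf{mon}$, $\mathbf{neg}$, $\mathbf{ex}$ of Definition~\ref{def:measuredmodel}. Concretely I would set up the linear system whose unknowns are the masses of the finitely many states and whose equations/inequalities are exactly $\mathbf{mon}$, $\mathbf{neg}$, $\mathbf{ex}$ with right-hand sides read off from $e$, and argue its solvability from the axiom instances $e$ satisfies.

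\textbf{Closing the argument.}
Once $\mu_\four$ is constructed so that $\mu_\four(|\phi|^{\mathsf{x}})=e(\mathsf{X}\phi)$ on all relevant $\phi$, the induced valuation on the outer layer agrees with $e$ on every modal atom of $\Xi\cup\{\alpha\}$, hence on all of $\Xi\cup\{\alpha\}$; so $\mathbb{M}$ witnesses $e(\gamma)=1$ for $\gamma\in\Xi$ and $e(\alpha)\neq1$, giving $\Xi\nvDash_{\fourLukProb}\alpha$ and proving completeness by contraposition. The hard step is the solvability of the representation system: one must verify that the axiom-enforced relations among the $e(\mathsf{X}\phi)$ are not merely necessary but \emph{sufficient} for a genuine (classical, by Example~\ref{example:+-nomeasure}) probability measure to exist on the finite state space, which is essentially a finite-dimensional consistency/Farkas-type argument, and routing it through the $\pm$-probability reformulation is what makes it tractable.
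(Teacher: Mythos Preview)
Your overall strategy---reduce to finite strong completeness of $\Luktriangle$ by treating modal atoms as fresh variables, then realise the resulting valuation as a $\four$-probability on a canonical $\BD$-model---is exactly the paper's. Two points of divergence deserve comment.

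First, a small but genuine gap: you write ``once we fix the finite set $\Sf$ of relevant $\BD$-subformulas''. Subformulas are not enough. The axioms $\four\mathsf{mon}$, $\four\mathsf{ex}$, $\four\mathsf{equiv}$ must hold for \emph{all} pairs $\phi,\chi$ in the language, not only those occurring as subformulas of $\Xi\cup\{\alpha\}$; otherwise the map you build on the canonical model need not satisfy Definition~\ref{def:4model} globally. The paper handles this by extending $\Xi$ to $\Xi^*$ with the probabilistic axioms built over \emph{all pairwise non-equivalent} $\LBD$-formulas in the variables $\Prop[\Xi\cup\{\alpha\}]$, which is still finite because $\BD$ is tabular.

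Second, your representation step is considerably more elaborate than necessary. You propose to translate to $\pm$-probability values, set up a linear system for a classical measure on the canonical state space, and invoke a Farkas-type argument for solvability---none of which you carry out. The paper bypasses all of this with a single observation: every clause of Definition~\ref{def:4model} constrains $\mu_\four$ only on sets of the form $|\phi|^\mathsf{x}$, never on arbitrary subsets of $W$. Hence, once $e$ validates every axiom instance in $\Xi^*$, one may simply \emph{declare} $\mu_\four(|\phi|^\mathsf{x})\coloneqq e(\mathsf{X}\phi)$ on the formula extensions and extend $\mu_\four$ to the rest of $2^W$ arbitrarily; the result is automatically a $\four$-probability. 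No linear algebra, no detour through Theorems~\ref{theorem:embeddings1}--\ref{theorem:embeddings2}, no appeal to~\cite{KleinMajerRad2021}. Your route could likely be completed, but it replaces a one-line observation with a substantial argument you have left as a promissory note.
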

\begin{proof}
Soundness can be established by the routine check of the validity of the axioms. E.g., for $\four\mathsf{equiv}$, observe that if $\phi$ and $\chi$ are equivalent in $\BD$, then $|\phi|^+=|\chi|^+$ and $|\phi|^-=|\chi|^-$, whence $|\phi|^\mathsf{x}=|\chi|^\mathsf{x}$ for every $\mathsf{x}\in\{\purebel,\puredisbel,\conf,\uncert\}$. Thus, $e(\mathsf{X}\phi)=e(\mathsf{X}\chi)$ for each $\mathsf{X}\in\{\purebelmod,\puredisbelmod,\conflmod,\uncertmod\}$, from where $\mathsf{X}\phi\leftrightarrow\mathsf{X}\chi$ is valid. $\four\mathsf{contr}$, $\four\mathsf{neg}$, and $\four\mathsf{mon}$ are straightforward translations of $\mathbf{contr}$, $\mathbf{neg}$, and $\mathbf{BCmon}$. $\four\mathsf{ex}$ is the translation of $\pm\mathsf{ex}$, whence are valid by Theorem~\ref{theorem:embeddings1}.

Last, consider $\four\mathsf{part1}$ and $\four\mathsf{part2}$. We have $\sum\limits_{\mathsf{x}\in\{\purebel,\puredisbel,\conf,\uncert\}}\!\!\!\!|\phi|^\mathsf{x}=1$. Hence, $\sum\limits_{\mathsf{X}\in\{\purebelmod,\puredisbelmod,\conflmod,\uncertmod\}}\!\!\!\!e(\mathsf{X}\phi)=1$. Thus, we have that $\four\mathsf{part1}$ and $\four\mathsf{part2}$ are valid. 

Let us now prove the completeness. We reason by contraposition. Assume that $\Xi\nvdash_{\HfourLukProb}\alpha$. Now, observe that $\HfourLukProb$ proofs are, actually, $\Luktriangle$ proofs with additional probabilistic axioms. 
Let $\Xi^*$ stand for $\Xi$ extended with probabilistic axioms built over all pairwise non-equivalent $\LBD$ formulas constructed from $\Prop[\Xi\cup\{\alpha\}]$. Clearly, $\Xi^*\nvdash_{\HfourLukProb}\alpha$ either. Moreover, $\Xi^*$ is finite as well since $\BD$ is tabular (and whence, there exist only finitely many pairwise non-equivalent $\LBD$ formulas over a~finite set of variables). Now, by the weak completeness of $\Luktriangle$ (Proposition~\ref{prop:HLuktrianglecompleteness}), there exists an $\Luktriangle$ valuation $e$ s.t.\ $e[\Xi^*]=1$ and $e(\alpha)\neq1$.

It remains to construct a~$\fourLukProb$-model $\mathbb{M}$ falsifying $\Xi^*\models_{\fourLukProb}\alpha$ using~$e$. We proceed as follows. First, we set $W=2^{\Lit[\Xi^*\cup\{\alpha\}]}$, and for every $w\in W$ define $w\in v^+(p)$ iff $p\in w$ and $w\in v^-(p)$ iff $\neg p\in w$. We extend the valuations to $\phi\in\LBD$ in the usual manner. Then, for $\mathsf{X}\phi\in\Sf[\Xi^*\cup\{\alpha\}]$, we set $\mu_\four(|\phi|^\mathsf{x})=e(\mathsf{X}\phi)$ according to modality $\mathsf{X}$.

It remains to extend $\mu_\four$ to the whole $2^W$. Observe, however, that any map from $2^W$ to $[0,1]$ that extends $\mu_\four$ is, in fact, a~$\four$-probability. Indeed, all requirements from Definition~\ref{def:4model} concern \emph{only the extensions of formulas}. But the model is finite, $\BD$ is tabular, and $\Xi^*$ contains all the necessary instances of probabilistic axioms and $e[\Xi^*]=1$, whence all constraints on the formulas are satisfied.
\end{proof}
\subsection{Complexity}
Let us now establish the complexity of $\LukProbsquare$ and $\fourLukProb$. Namely, we prove their $\np$-completeness. Since the mutual embeddings given in Definition~\ref{def:embeddings} result in an at most linear increase in the size of the formulas, it suffices to consider only one of them. In this section, we choose to give a~non-deterministic polynomial algorithm for $\LukProbsquare$ since it contains only one modality which will simplify the presentation. Note, furthermore, that $\Luktriangle$ (and thus, $\Luksquareorder$) are $\np$-hard, thus, $\LukProbsquare$ and $\fourLukProb$ are $\np$-hard as well. Thus, we only need to establish the upper bound.

For the proof, we adapt constraint tableaux for $\Luk^2$ defined by~\cite{BilkovaFrittellaKozhemiachenko2021TABLEAUX} and expand them with rules for $\triangle$. We then adapt the $\np$-completeness proof of $\mathsf{FP}(\Luk)$ of~\cite{HajekTulipani2001} to establish our result.
\begin{definition}[Constraint tableaux for $\Luksquareorder$ --- $\TLuksquareorder$]\label{def:L2triangleconstrainttableaux}
Branches contain \emph{labelled formulas} of the form $\phi\leqslant_1i$, $\phi\leqslant_2 i$, $\phi\geqslant_1i$, or $\phi\geqslant_2i$, and \emph{numerical constraints} of the form $i\leq j$ with $i$~and $j$ (labels) being linear polynomials over $[0,1]$. Each branch can be extended by an application of a~rule below.
\begin{align*}
\neg\!\leqslant_1\!\dfrac{\neg\phi\leqslant_1i}{\phi\leqslant_2i}&&
\neg\!\leqslant_2\!\dfrac{\neg\phi\leqslant_2i}{\phi\leqslant_1i}&&
\neg\!\geqslant_1\!\dfrac{\neg\phi\geqslant_1i}{\phi\geqslant_2i}&&
\neg\!\geqslant_2\!\dfrac{\neg\phi\geqslant_2i}{\phi\geqslant_1i}\\[.5em]
{\sim}\!\leqslant_1\!\dfrac{{\sim}\phi\leqslant_1i}{\phi\geqslant_11-i}
&&
{\sim}\!\leqslant_2\!\dfrac{{\sim}\phi\leqslant_2i}{\phi\geqslant_21-i}
&&
{\sim}\!\geqslant_1\!\dfrac{{\sim}\phi\geqslant_1i}{\phi\leqslant_11-i}
&&
{\sim}\!\geqslant_2\!\dfrac{{\sim}\phi\geqslant_2i}{\phi\leqslant_21-i}\\[.5em]
\triangle\!\geqslant_1\!\dfrac{\triangle\phi\geqslant_1i}{i\leq0\left|\begin{matrix}\phi\geqslant_1j\\j\geq1\end{matrix}\right.}
&&
\triangle\!\leqslant_1\!\dfrac{\triangle\phi\leqslant_1i}{i\geq1\left|\begin{matrix}\phi\leqslant_1j\\j<1\end{matrix}\right.}
&&
\triangle\!\leqslant_2\!\dfrac{\triangle\phi\leqslant_2i}{i\geq1\left|\begin{matrix}\phi\leqslant j\\j\leq0\end{matrix}\right.}
&&
\triangle\!\geqslant_2\!\dfrac{\triangle\phi\geqslant_2i}{i\leq0\left|\begin{matrix}\phi\geqslant j\\j>0\end{matrix}\right.}\\[.5em]
\rightarrow\leqslant_1\dfrac{\phi_1\rightarrow\phi_2\leqslant_1i}{i\!\geq\!1\left|\begin{matrix}\phi_1\!\geqslant_1\!1\!-\!i\!+\!j\\\phi_2\leqslant_1j\\j\leq i\end{matrix}\right.}
&&
\rightarrow\leqslant_2\dfrac{\phi_1\!\rightarrow\!\phi_2\!\leqslant_2\!i}{\begin{matrix}\phi_1\geqslant_2j\\\phi_2\!\leqslant_2\!i\!+\!j\end{matrix}}
&&
\rightarrow\geqslant_1\dfrac{\phi_1\!\rightarrow\!\phi_2\geqslant_1 i}{\begin{matrix}\phi_1\!\leqslant_1\!1\!-\!i\!+\!j\\\phi_2\geqslant_1j\end{matrix}}
&&
\rightarrow\geqslant_2\dfrac{\phi_1\rightarrow\phi_2\geqslant_2i}{i\!\leq\!0\left|\begin{matrix}\phi_1\!\leqslant_2\!j\\\phi_2\!\geqslant_2\!i\!+\!j\\j\leq 1-i\end{matrix}\right.}
\end{align*}
Let $i$'s be linear polynomials that label the formulas in the branch and $x$'s be variables ranging over $[0,1]$. We define the translation $\tmc$ from labelled formulas to linear inequalities as follows:
\begin{align*}
\tmc(\phi\leqslant_1i)=x_\phi^L\leq i;&&
\tmc(\phi\geqslant_1i)=x_\phi^L\geq i;&&
\tmc(\phi\leqslant_2i)=x_\phi^R\leq i;&&
\tmc(\phi\geqslant_2i)=x_\phi^R\geq i
\end{align*}
Let $\triangledown\in\{\leqslant_1,\geqslant_1,\leqslant_2,\geqslant_2\}$. A tableau branch
\[\Bmc=\{\phi_1\triangledown i_1,\ldots,\phi_m\triangledown i_m,k_1\leq l_1,\ldots,k_q\leq l_q\}\]
is \emph{closed} if the system of inequalities
\[\Bmc^\tmc=\{\tmc(\phi_1\triangledown i_1),\ldots,\tmc(\phi_m\triangledown i_m),k_1\leq l_1,\ldots,k_q\leq l_q\}\]
does not have solutions. Otherwise, $\Bmc$ is \emph{open}. If no rule application adds new entries to $\Bmc$, it is called \emph{complete}. A tableau is \emph{closed} if all its branches are closed. $\phi$ has a \emph{$\TLuksquareorder$ proof} if the tableau beginning with $\{\phi\leqslant_1c, c<1\}$ is closed.
\end{definition}
\begin{remark}\label{rem:branchingrules}
Let us briefly remark on how to interpret rules and entries in the tableau. Consider, for instance, the rule $\rightarrow\leqslant_2$. It's meaning is: $v_2(\phi_1\rightarrow\phi_2)\leq i$ iff there is $j \in [0,1]$ s.t.\ $v_2(\phi_1)\geq j$ and $v_2(\phi_2)\leq i+j$. Thus, $\psi\leqslant_1i$ means that $v_1(\phi)\leq i$, $\psi\geqslant_2i$ that $v_2(\psi)\geq i$, etc.

Furthermore, our tableaux rules for $\triangle$ are \emph{necessarily branching} in contrast to the rules for standard $\Luk$ connectives proposed by~\cite{Haehnle1992,Haehnle1994}. This is because all connectives of $\Luk$ are continuous. On the other hand, $\triangle$ is not continuous, hence, there cannot be a non-branching rule for it.
\end{remark}
\begin{definition}[Satisfying valuation of a branch]\label{def:satisfyingvaluation}
Let $v_1$ and $v_2$ be $\Luksquareorder$ valuations. $v_1$~\emph{satisfies} a labelled formula $\phi\leqslant_1i$ ($v_2$ satisfies $\phi\geqslant_2i$) iff $v_\mathrm1(\phi)\leq i$ (resp., $v_2(\phi)\geq i$). $v_1$~\emph{satisfies} a~branch~$\Bmc$ iff $v_1$~\emph{satisfies} every labelled formula in $\Bmc$ (and similarly for $v_2$).
A~branch $\Bmc$ is \emph{satisfiable} iff there is a pair of valuations $\langle v_1,v_2\rangle$ that satisfies it. 
\end{definition}
\begin{theorem}[Completeness of tableaux]\label{theorem:tableauxcompleteness} ~
\begin{enumerate}
\item $\phi$ is $\Luktriangle$-valid iff it has a $\TLuksquareorder$ proof.
\item $\phi$ is $\Luksquareorder$-valid iff it has a $\TLuksquareorder$ proof.
\end{enumerate}
\end{theorem}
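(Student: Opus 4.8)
The plan is to reduce both items to a single statement about the \emph{support-of-truth} valuation and then prove that statement by the constraint-tableau method of \cite{Haehnle1992,Haehnle1994,BilkovaFrittellaKozhemiachenko2021TABLEAUX}, adapted to the two-sided labels and to the branching $\triangle$-rules. First I would record a \emph{conflation lemma}: for any pair $\langle v_1,v_2\rangle$ the pair defined on atoms by $v_1^c(p)={\sim_\Luk}v_2(p)$ and $v_2^c(p)={\sim_\Luk}v_1(p)$ satisfies $v_1^c(\phi)={\sim_\Luk}v_2(\phi)$ and $v_2^c(\phi)={\sim_\Luk}v_1(\phi)$ for every $\phi\in\LLuksquareorder$ (a routine induction on $\phi$ using Definition~\ref{def:Luk2triangle}, i.e.\ the duality already underlying Lemma~\ref{lemma:LukProbsquareconflation}). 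From this it follows that $\phi$ is $\Luksquareorder$-valid iff $v_1(\phi)=1$ for every valuation pair: the forward direction is immediate, while conversely, if $v_1(\phi)=1$ always, then applying this to $\langle v_1^c,v_2^c\rangle$ forces $v_2(\phi)=0$ always. Since the root of a $\TLuksquareorder$ proof is $\{\phi\leqslant_1c,\,c<1\}$, which tests precisely $v_1(\phi)<1$, it suffices to prove: \emph{the tableau from $\{\phi\leqslant_1c,\,c<1\}$ closes iff $v_1(\phi)=1$ for all valuation pairs}. For item~(1), treating the derived binary connectives as defined via Remark~\ref{rem:smalllanguage}, I would observe that when $\phi$ is $\neg$-free no rule ever produces a $\leqslant_2/\geqslant_2$ label, so the tableau collapses to the ordinary $\Luk_\triangle$ constraint tableau; since $v_1$ restricted to $\neg$-free formulas is exactly a $\Luk_\triangle$-valuation, $v_1$-validity coincides with $\Luk_\triangle$-validity, and the italicised statement specialises to item~(1).

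The heart of the argument is a per-rule \emph{correctness lemma}: for each rule a pair $\langle v_1,v_2\rangle$ satisfies (in the sense of Definition~\ref{def:satisfyingvaluation}) all labelled formulas of the premise iff it satisfies all labelled formulas of at least one conclusion branch. I would verify this rule by rule by unfolding Definition~\ref{def:Luk2triangle} together with the operations of Definition~\ref{def:Lukalgebra}: the $\neg$- and ${\sim}$-rules are immediate, the non-branching $\rightarrow\leqslant_2$ and $\rightarrow\geqslant_1$ rules encode $\ominus_\Luk$ and $\rightarrow_\Luk$ directly, and the branching $\rightarrow\leqslant_1$, $\rightarrow\geqslant_2$ and the four $\triangle$-rules require a case split matching the two-valued behaviour of $\triangle_\Luk$ on the truth side and of ${\sim_\Luk}\triangle_\Luk{\sim_\Luk}$ on the falsity side. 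To connect this to the closure condition I need the bridge claim that, for a \emph{complete} branch $\Bmc$, the system $\Bmc^\tmc$ is solvable iff $\Bmc$ is satisfiable by some valuation pair: the easy direction reads off $x_\psi^L=v_1(\psi)$ and $x_\psi^R=v_2(\psi)$, while the converse takes a solution, sets $v_1(p),v_2(p)$ to the solution values of the atomic labels, and shows by induction on formula complexity---using that completeness has forced every compound entry to be decomposed---that the induced pair satisfies every entry of $\Bmc$.

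With these pieces the two directions are contrapositives. For completeness, suppose $v_1(\phi)=1$ for all pairs but the tableau does \emph{not} close; then some complete branch is open, so $\Bmc^\tmc$ is solvable, and the bridge claim yields a pair with $v_1(\phi)\leq c<1$, a contradiction. For soundness, suppose $v_1(\phi)<1$ for some pair; choosing $c$ with $v_1(\phi)\leq c<1$ makes the root satisfiable, and the correctness lemma, applied along each rule step, keeps a satisfiable---hence, by the easy half of the bridge claim, open---branch alive, so the tableau cannot close. Termination, needed to guarantee that complete branches exist, holds because every rule replaces a labelled formula by labels on its proper subformulas: a multiset measure on formula complexities strictly decreases, and the auxiliary label variables $j$ enter only the numerical constraints, not that measure.

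The step I expect to be the main obstacle is the converse half of the bridge claim---extracting an honest pair of $\Luksquareorder$-valuations from a mere solution of the linear program on a complete open branch. The difficulty concentrates in the branching rules: since $\triangle$ is not continuous its rules are necessarily branching (Remark~\ref{rem:branchingrules}), so one must check that the chosen branch's constraints (e.g.\ $i\leq0$ versus $\phi\geqslant_1j,\,j\geq1$, and their analogues) are jointly realisable by the \emph{same} real values the induction assigns to subformulas, and that the fresh variables $j$ introduced by $\rightarrow\leqslant_1$ and $\rightarrow\geqslant_2$ can be read consistently across the $_1$- and $_2$-sides. Keeping this $_1/_2$ bookkeeping coherent through the case splits is where care is required; everything else is the routine continuous-$\Luk$ constraint-tableau machinery.
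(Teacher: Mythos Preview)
Your proposal is correct and follows essentially the same approach as the paper's proof. The paper cites \cite[Proposition~1]{BilkovaFrittellaKozhemiachenko2021TABLEAUX} for your conflation lemma, proves the forward half of your correctness lemma for soundness (illustrated on $\triangle\!\geqslant_1$), and for completeness extracts a valuation from a solution of the linear system on a complete open branch and verifies by induction on formulas that it satisfies every entry---precisely your bridge claim; your more explicit decomposition into correctness lemma plus bridge claim is just a cleaner packaging of the same argument.
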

\begin{proof}
The proof follows~\cite[Theorem~1]{BilkovaFrittellaKozhemiachenko2021TABLEAUX}. For soundness, we show that if $\langle v_1,v_2\rangle$ satisfies the premise of a~rule, then it also satisfies one of its conclusions. We consider the case of $\triangle\!\geqslant_1$ (the rules for other connectives are tackled by~\cite{BilkovaFrittellaKozhemiachenko2021TABLEAUX}; the remaining rules for $\triangle$ can be dealt with analogously).

Let $\triangle\phi\geqslant_1i$ be satisfied by $\langle v_1,v_2\rangle$. Then, $v_1(\triangle\phi)\geqslant i$. We have two cases: (i) $i=0$ and (ii) $i>0$. In the first case, the left conclusion is satisfied. In the second case, $v_1(\triangle\phi)=1$. Hence, $v_1(\phi)=1$, i.e., the right conclusion is satisfied.

For completeness, note from~\cite[Proposition~1]{BilkovaFrittellaKozhemiachenko2021TABLEAUX} that $\phi$ is $\Luksquareorder$-valid iff $v_1(\phi)=1$ for every $v$. Let us now show that every complete open branch can be satisfied. Assume that $\Bmc$ is a complete open branch. We construct the satisfying valuation as follows. Let $\triangledown\!\in\!\{\leqslant_1,\geqslant_1,\leqslant_2,\geqslant_2\}$ and $p_1,\ldots,p_m$ be the propositional variables appearing in the atomic labelled formulas in~$\Bmc$.

Let $\{p_1\triangledown j_1,\ldots,p_m\triangledown j_n\}$ and $\{k_1\leq l_1,\ldots,k_q\leq l_q\}$ be the sets of all atomic labelled formulas and all numerical constraints in $\Bmc$. Notice that one variable might appear in many atomic labelled formulas, hence we might have $m \neq n$. Since $\Bmc$ is complete and open, the following system of linear inequalities over the set of variables $\{x_{p_1}^L,x_{p_1}^R,\ldots,x_{p_m}^L,x_{p_m}^R\}$ must have at least one solution under the constraints listed:
\[\tmc(p_1\triangledown i_1),\ldots,\tmc(p_m\triangledown i_n),k_1\leq l_1,\ldots,k_q\leq l_q\]
Let $c=(c^L_1,c^R_1,\ldots,c^L_m,c^R_m)$ be a solution to the above system of inequalities such that $c^L_j$ ($c^R_j$) is the value of $x_{p_j}^L$ ($x_{p_j}^R$). Define valuations as follows: $v_1(p_j)=c^L_j$ and $v_2(p_j)=c^R_j$.

It remains to show by induction on $\phi$ that all formulas present at $\Bmc$ are satisfied by $v_1$ and $v_2$. The basis case of variables holds by the construction of $v_1$ and $v_2$. We consider the case of $\triangle\phi\geqslant_1i$ (the cases of other variables can be dealt with similarly).

Assume that $\triangle\phi\geqslant_1i\in\Bmc$. Since $\Bmc$ is complete and open, we have that either (i) $i\leq0\in\Bmc$ or (ii) $\{\phi\geqslant_1j,j\geq1\}\subseteq\Bmc$. In the first case, since there is a~solution for $\Bmc^\tmc$, we have that $i=0$, and thus $\triangle\phi\geqslant_1i$ is satisfied by any $v_1$. In the second case, we have that $\{\phi\geqslant_1j,j\geq1\}$ is satisfied by the induction hypothesis, whence $v_1(\phi)=1$, and thus, $v_1(\triangle\phi)=1$, as required.
\end{proof}
\begin{definition}[Mixed-integer problem]\label{def:bMIP}
Let $\mathbf{x}=(x_1,\ldots,x_k)\in\mathbb{R}^k$ and $\mathbf{y}=(y_1,\ldots,y_m)\in\mathbb{Z}^m$ be variables, $A$ and $B$ be integer matrices, $h$ an integer vector, and $f(\mathbf{x},\mathbf{y})$ be a~$k+m$-place linear function.
\begin{enumerate}
\item \emph{A general MIP} is to find $\mathbf{x}$ and $\mathbf{y}$ s.t.\ $f(\mathbf{x},\mathbf{y})=\min\{f(\mathbf{x},\mathbf{y}):A\mathbf{x}+B\mathbf{y}\geq h\}$.
\item \emph{A bounded MIP (bMIP)} is to find all solutions that belong to $[0,1]$.
\end{enumerate}
\end{definition}
\begin{proposition}\label{prop:bMIPNPcomplete}
Bounded MIP is $\np$-complete.
\end{proposition}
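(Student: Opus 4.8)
The plan is to establish the two halves of $\np$-completeness separately: $\np$-hardness by reduction from a classically $\np$-complete problem, and membership in $\np$ by a guess-and-check procedure that exploits the $[0,1]$ restriction on the integer variables. Throughout, I read the decision problem underlying bMIP as feasibility, i.e.\ whether there exists a solution $(\mathbf{x},\mathbf{y})$ to $A\mathbf{x}+B\mathbf{y}\geq h$ with every coordinate in $[0,1]$; this is the formulation for which ``$\np$-complete'' is the intended claim.

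For $\np$-hardness I would reduce from $0/1$ integer linear programming feasibility (equivalently, directly from $\mathsf{SAT}$). The crucial observation is that in a bMIP the integer variables $\mathbf{y}\in\mathbb{Z}^m$ are additionally confined to $[0,1]$, so in fact $y_i\in\{0,1\}$. Taking $k=0$ (no real variables) exhibits a $0/1$ feasibility problem $B\mathbf{y}\geq h$ as a special case of bMIP, which already yields hardness since $0/1$-ILP feasibility is $\np$-complete. A direct encoding of $\mathsf{SAT}$ works as well: introduce one variable $y_p\in\{0,1\}$ per propositional letter $p$ and translate each clause $\ell_1\vee\dots\vee\ell_r$ into the linear inequality obtained by summing $y_p$ for the positive literals and $1-y_p$ for the negative ones and demanding the result to be $\geq 1$; a $0/1$ solution then corresponds exactly to a satisfying assignment.

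For membership in $\np$, the same binarity observation is the engine. A nondeterministic machine first guesses an assignment $\mathbf{y}\in\{0,1\}^m$, a certificate of size $m$ and hence polynomial in the input. After substituting this guess, the residual constraints $A\mathbf{x}\geq h-B\mathbf{y}$ together with the box constraints $0\leq x_i\leq 1$ constitute an ordinary linear programming feasibility problem in the real variables $\mathbf{x}$. By Khachiyan's theorem LP feasibility is decidable in deterministic polynomial time, so the machine can check in polynomial time whether the guessed $\mathbf{y}$ extends to a full solution, giving a nondeterministic polynomial-time decision procedure.

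The only point requiring genuine care is the membership direction, namely keeping both the certificate and its verification polynomial. Here the bound $[0,1]$ on the integer variables is exactly what makes the argument clean: it collapses $\mathbb{Z}^m\cap[0,1]^m$ to the finite set $\{0,1\}^m$, so one never has to bound the magnitude or bit-length of the integer part separately, in contrast to the general (unbounded) MIP, where one must first prove that a feasible instance admits solutions of polynomially bounded size before guessing them. Should an explicit real witness $\mathbf{x}$ be wanted rather than mere appeal to an LP-feasibility oracle, I would invoke the standard fact that a feasible linear program over a rational polytope has a basic feasible solution whose bit-length is polynomial in the size of the constraint description; guessing such a vertex and verifying the inequalities directly then keeps the entire check inside $\np$.
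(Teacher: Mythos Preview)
Your argument is correct and is the standard proof: hardness via the special case $k=0$ (which is $0/1$-ILP feasibility, hence $\mathsf{SAT}$), and membership in $\np$ by guessing $\mathbf{y}\in\{0,1\}^m$ and solving the residual LP in polynomial time. The paper itself does not supply a proof of this proposition; it is stated as a known result (the surrounding discussion points to H\"{a}hnle's work on reducing \L ukasiewicz validity to bMIP), so there is nothing to compare against beyond noting that your write-up fills in exactly the folklore argument the paper takes for granted.
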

\begin{remark}\label{rem:branchingNP}
The proof of $\Luk$'s $\np$-completeness by~\cite{Haehnle1992,Haehnle1994} uses the reduction of a~Łukasiewicz formula $\phi$ to \emph{one} instance of a~bMIP of the size $\Omc(\lmc(\phi))$. This is because the rules are \emph{linear}. In our case (cf.~Remark~\ref{rem:branchingrules}), we cannot avoid branching. This, however, does not affect the complexity of $\Luktriangle$ and $\Luksquareorder$: we can just non-deterministically guess one branch of our tableau and then solve the system of inequalities corresponding to it. Note that in our setting \emph{all variables can be evaluated over $\mathbb{R}\cap[0,1]$}; in addition, lengths of branches are polynomial in $\lmc(\phi)$. Hence, we can solve the system of inequalities obtained from the branch in the time polynomial w.r.t.\ $\lmc(\phi)$ (and if there is no solution, then the branch is closed and we need to guess again). Thus, $\Luktriangle$ and $\Luksquareorder$ are also $\np$-complete.
\end{remark}

Let us now proceed to the proof of $\np$-completeness of $\LukProbsquare$. First, we show that we can \emph{completely remove} $\neg$'s from $\LLukProbsquare$ formulas while preserving their satisfiability.
\begin{lemma}\label{lemma:LukProbsquarenegremoval}
For any outer-$\neg$-free $\alpha\in\LLukProbsquare$, there exists $\alpha^*$ where $\neg$ does not occur at all s.t.\ $\alpha$~is $\LukProbsquare$-valid iff $\alpha^*$ is $\LukProbsquare$-valid.
\end{lemma}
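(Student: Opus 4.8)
The plan is to reduce everything to the \emph{support of truth} by invoking Lemma~\ref{lemma:LukProbsquareconflation}, and then to exploit the independence of $v^+$ and $v^-$ in $\BD$-models. First I would observe that, since $\alpha$ is outer-$\neg$-free, its value $e_1(\alpha)$ is determined entirely by the numbers $e_1(\Prob\phi)=\mu(|\phi|^+)$ attached to its modal atoms: the outer connectives ${\sim}$, $\triangle$, and $\rightarrow$ all compute their support of truth from the supports of truth of their arguments (Definitions~\ref{def:Lukasiewicz} and~\ref{def:Luk2triangle}), and by Lemma~\ref{lemma:LukProbsquareconflation}, $\LukProbsquare$-validity of $\alpha$ is equivalent to $e_1(\alpha)=1$ in every model. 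Thus only the \emph{positive} extensions of the inner $\BD$ formulas matter.

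Next I would define $\alpha^*$. Using Remark~\ref{rem:NNF}, I replace each inner formula $\phi$ occurring in a modal atom by $\NNF(\phi)$; this leaves $|\phi|^+$ (and $|\phi|^-$) unchanged, hence alters no modal-atom value. In a negation normal form, $\neg$ occurs only in literals, so I then fix, for each variable $p$, a fresh variable $\overline{p}$ and replace every negated literal $\neg p$ (in every modal atom, consistently) by $\overline{p}$. The resulting $\alpha^*$ contains no $\neg$ whatsoever, and $\lmc(\alpha^*)=\Omc(\lmc(\alpha))$.

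The heart of the argument is that the tuples $(\mu(|\phi|^+))_{\Prob\phi}$ realisable by $\LukProbsquare$-models coincide with the tuples $(\mu(|\phi^*|^+))_{\Prob\phi^*}$. For a formula in NNF, the positive extension is computed from $v^+(p)$ on positive literals and from $|\neg p|^+=v^-(p)$ on negative literals, while the $\wedge,\vee$-clauses for $|\cdot|^+$ never refer to any $v^-$ directly; likewise the positive extension of the $\neg$-free $\phi^*$ is computed from $v^+(p)$ and $v^+(\overline{p})$. Given a model for the $\alpha$-side on $\langle W,v^+,v^-,\mu\rangle$, I keep $W$ and $\mu$, keep $v^+$ on the old variables, and set $v^+(\overline{p})=v^-(p)$; an easy induction on the NNF structure yields $|\phi^*|^+=|\phi|^+$, so the modal-atom values, and therefore $e_1(\alpha^*)$, agree with those of $\alpha$. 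Conversely, given a model for the $\alpha^*$-side, I keep $v^+$ on the old variables and set $v^-(p)=v^+(\overline{p})$ (choosing $v^+(p)$ or $v^-(p)$ arbitrarily when the corresponding literal is absent); again $|\phi|^+=|\phi^*|^+$, so the $e_1$-values agree. Hence $e_1(\alpha)=1$ in all models iff $e_1(\alpha^*)=1$ in all models, which by Lemma~\ref{lemma:LukProbsquareconflation} gives $\LukProbsquare$-validity of $\alpha$ iff that of $\alpha^*$.

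The step I expect to be the main obstacle is the converse transfer, from an $\alpha^*$-model back to an $\alpha$-model. The naive attempt to match \emph{both} positive and negative extensions fails: in an $\alpha^*$-model the valuations $v^+(\overline{p})$ and $v^+(p)$ are independent, so one cannot simultaneously demand $v^-(p)=v^+(\overline{p})$ and $v^+(p)=v^-(\overline{p})$. Lemma~\ref{lemma:LukProbsquareconflation} is precisely what rescues the construction: because validity only constrains the support of truth, I need to reproduce only the positive extensions, and since $v^+$ and $v^-$ are wholly independent in $\BD$-models, I am free to set $v^-(p):=v^+(\overline{p})$ while leaving $v^+(p)$ untouched. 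This single degree of freedom is what makes the $\neg$-elimination sound in both directions.
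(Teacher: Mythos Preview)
Your proposal is correct and follows essentially the same approach as the paper: reduce to $e_1$ via Lemma~\ref{lemma:LukProbsquareconflation}, put the inner formulas into NNF, replace each $\neg p$ by a fresh variable, and transfer models in each direction by setting $v^+(\overline{p})=v^-(p)$ (forward) and $v^-(p)=v^+(\overline{p})$ (converse) so that positive extensions are preserved. Your discussion of the converse obstacle and why Lemma~\ref{lemma:LukProbsquareconflation} resolves it is exactly the point the paper relies on.
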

\begin{proof}
We construct $\alpha^*$ as follows. First, we take every modal atom $\Prob\phi$ (recall that $\phi\in\LBD$) and replace $\phi$ with its $\NNF$. Second, we replace every literal $\neg p$ occurring in $\Prob(\NNF(\phi))$ with a~corresponding fresh variable $p'$. I.e., $\neg p$ is replaced with $p'$, $\neg q$ with $q'$, etc. Outer-layer connectives remain the same. Observe, that this increases the number of symbols in $\alpha$ \emph{at most linearly}: indeed, the transformation of $\phi$ into $\NNF(\phi)$ is linear (recall Remark~\ref{rem:NNF}) and then we replace every occurrence of $\neg p$ with a~fresh variable $p^*$. It remains to check that validity is preserved.

Let $\alpha$ be \emph{not $\LukProbsquare$-valid}, i.e., let $e(\alpha)\neq(1,0)$ at some $\LukProbsquare$-model. By Lemma~\ref{lemma:LukProbsquareconflation}, this is equivalent to $e_1(\alpha)\neq1$ in some $\mathbb{M}=\langle W,v^+,v^-,\mu,e_1,e_2\rangle$. Now, let $\mathbb{M}^*=\langle W,{v^*}^+,{v^*}^-,\mu,e^*_1,e^*_2\rangle$ s.t.\ ${v^*}^+(p)=v^+(p)$ and ${v^*}^+(p')=v^-(p)$. It suffices to show that $e_1(\alpha)=e^*_1(\alpha^*)$.

We proceed by induction on $\alpha$. Let $\alpha=\Prob\phi$ for some $\phi\in\LBD$. We have that $e_1(\alpha)=\mu(|\phi|^+)=\mu(|\NNF(\phi)|^+)$. We check that $|\NNF(\phi)|^+_\mathbb{M}=|\NNF(\phi)^*|^+_{\mathbb{M}^*}$ by induction on $\NNF(\phi)$.

If $\NNF(\phi)=p$, then $p^*=p$ and $v^+(p)={v^*}^+(p)$ by construction. If $\NNF(\phi)=\neg p$, then $(\neg p)^*=p'$ and $|\neg p|^+=v^-(p)={v^*}^+(p')$ by construction. If $\NNF(\phi)=\chi\wedge\psi$, then $\NNF(\phi)=(\chi\wedge\psi)^*=\chi^*\wedge\psi^*$. By the induction hypothesis, $|\chi|^+=|\chi^*|^+$ and $|\psi|^+=|\psi^*|^+$. Hence, $|\chi\wedge\psi|^+_\mathbb{M}=|(\chi\wedge\psi)^*|^+_{\mathbb{M}^*}$. The case of $\NNF(\phi)=\chi\vee\psi$ can be considered in the same way.

It follows that $e_1(\Prob\phi)=e^*_1(\Prob(\NNF(\phi)^*))$. The cases of $\Luksquareorder$ connectives can be proven by a~straightforward application of the induction hypothesis.

For the converse, assume that $\alpha^*$ is not $\LukProbsquare$-valid. Hence, $e_1(\alpha^*)\neq1$ in some model $\mathbb{M}=\langle W,v^+,v^-,\mu,e_1,e_2\rangle$ by Lemma~\ref{lemma:LukProbsquareconflation}. We define $\mathbb{M}^\bullet=\langle W,{v^\bullet}^+,{v^\bullet}^-,\mu,e^\bullet_1,e^\bullet_2\rangle$ with ${v^\bullet}^-(p)=v^+(p')$ and ${v^\bullet}^+(p)=v^+(p)$. We can now show that $e_1(\alpha^*)=e^\bullet_1(\alpha)$ (i.e., $e^\bullet_1(\alpha)\neq1$) as in the previous case. The result follows.
\end{proof}
We can now apply this lemma to adapt the proof of the $\np$-completeness of $\mathsf{FP}(\Luk)$.
\begin{theorem}\label{theorem:LukProbnpcompleteness}
Validity of $\LukProbsquare$ and $\fourLukProb$ is $\conp$-complete.
\end{theorem}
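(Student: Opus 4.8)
The plan is to prove $\conp$-hardness and $\conp$-membership separately for $\LukProbsquare$ and then transfer both bounds to $\fourLukProb$ at no cost. Since the mutual translations of Definition~\ref{def:embeddings} preserve validity in both directions (Theorems~\ref{theorem:embeddings1} and~\ref{theorem:embeddings2}) and, by Lemma~\ref{lemma:lineartranslationprobability}, increase formula length only linearly, it suffices to determine the complexity of the validity problem of $\LukProbsquare$; the matching bound for $\fourLukProb$ then follows immediately. Throughout I shall use Lemma~\ref{lemma:LukProbsquareconflation} to reduce $\LukProbsquare$-validity of $\alpha$ to the single condition $e_1(\alpha)=1$ in every model, so that only the support-of-truth valuation ever needs to be tracked.

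For $\conp$-hardness I would reduce from the $\conp$-hard validity problem of $\Luktriangle$ (the complement being $\np$-complete by Remark~\ref{rem:branchingNP}). Given $\psi\in\LLuk$ over variables $q_1,\ldots,q_k$, I replace each $q_i$ by a modal atom $\Prob p_i$ for pairwise distinct fresh $\BD$-variables $p_i$, obtaining $\psi^\Prob\in\LLukProbsquare$; this is well-formed since all $\Luktriangle$ connectives are definable from ${\sim}$, $\triangle$ and $\rightarrow$, all of which $\LLukProbsquare$ possesses (Remark~\ref{rem:smalllanguage}). The crucial point is that $e_1(\Prob p_1),\ldots,e_1(\Prob p_k)$ can jointly realise any point of $[0,1]^k$: taking the $\BD$-model whose states are the $2^k$ classical valuations of the $p_i$ equipped with a product distribution makes $\mu(|p_i|^+)$ take arbitrary independent values. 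Hence $e_1(\psi^\Prob)$ ranges exactly over $\{v(\psi)\mid v\text{ a }\Luktriangle\text{ valuation}\}$, so by Lemma~\ref{lemma:LukProbsquareconflation} $\psi$ is $\Luktriangle$-valid iff $\psi^\Prob$ is $\LukProbsquare$-valid, and the reduction is plainly polynomial.

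For $\conp$-membership I would show that \emph{non}-validity lies in $\np$, adapting the technique of~\cite{HajekTulipani2001}. Fix $\alpha\in\LLukProbsquare$; by Lemma~\ref{lemma:LukProbsquareconflation} it fails validity iff $e_1(\alpha)<1$ in some model. I first pass to the outer-$\neg$-free $\alpha^\neg$ (Definition~\ref{def:positiveNNFs}) and then to the $\neg$-free $\alpha^*$ of Lemma~\ref{lemma:LukProbsquarenegremoval}; both steps preserve validity and cost only a linear blow-up, so $\alpha^*$ is a $\Luktriangle$-combination of finitely many modal atoms $\Prob\phi_1,\ldots,\Prob\phi_n$ with $n=\Omc(\lmc(\alpha))$. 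The nondeterministic algorithm then guesses (i) at most $n+1$ states, each given as a valuation of the variables in $\Prop(\alpha)$, hence of polynomial size, and (ii) a single branch of the constraint tableau $\TLuksquareorder$ for $\alpha^*\leqslant_1 c,\ c<1$, whose length is polynomial in $\lmc(\alpha^*)$ (Remark~\ref{rem:branchingNP}). From this guess I assemble one bounded mixed-integer problem whose real variables are the probabilities $m_s$ of the guessed states together with the label variables of the branch, and whose constraints are: nonnegativity and normalisation of the $m_s$; the linking equations $x^L_{\Prob\phi_j}=\sum\{\,m_s\mid s\vDash^+\phi_j\,\}$ (computable in polynomial time once the states are fixed); and the inequalities $\Bmc^\tmc$ read off the chosen branch. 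By soundness and completeness of the tableaux (Theorem~\ref{theorem:tableauxcompleteness}) together with the linking equations, this system is solvable over $[0,1]$ iff $\alpha^*$ is falsified by the induced $\pm$-probability model, and by Proposition~\ref{prop:bMIPNPcomplete} its solvability is decidable in $\np$. Thus non-validity is in $\np$, so validity is in $\conp$, and with the hardness reduction this yields $\conp$-completeness.

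The main obstacle is the small-model step underpinning membership: I must justify that whenever $e_1(\alpha^*)<1$ is witnessed by some \emph{arbitrary} $\BD$-model with a classical probability measure (recall Example~\ref{example:+-nomeasure}), it is already witnessed by one supported on at most $n+1$ states, so that the polynomially many guessed states suffice. This is a Carathéodory-type argument: associating with each state its incidence vector $([s\vDash^+\phi_1],\ldots,[s\vDash^+\phi_n])\in\{0,1\}^n$, any probability measure induces a convex combination of these vectors, which can be rewritten using at most $n+1$ of them while preserving every coordinate, i.e.\ every quantity $\mu(|\phi_j|^+)$ entering $\alpha^*$. Making precise exactly which state-valuations must be available, so that the guessed states genuinely realise the required positive extensions $|\phi_j|^+$, and checking that the assembled mixed-integer problem stays of polynomial size, are the delicate points; once they are settled the $\np$ upper bound follows, and the $\conp$-completeness of $\fourLukProb$ is inherited through the linear embeddings.
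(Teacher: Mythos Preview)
Your proposal is correct and follows essentially the same route as the paper: reduce to $\LukProbsquare$ via the linear mutual embeddings, strip all $\neg$'s via Lemma~\ref{lemma:LukProbsquarenegremoval}, decompose the outer $\Luktriangle$-part with the constraint tableau, and then bound the support of the probability measure so that the resulting linear system has polynomial size. The only cosmetic difference is that you invoke Carath\'eodory on the incidence vectors $([s\vDash^+\phi_j])_{j\leq n}$ to obtain an $(n{+}1)$-state model before touching the tableau, whereas the paper first forms the combined system with $2^n$ state-variables and then cites~\cite[Lemma~2.5]{FaginHalpernMegiddo1990} to keep at most $l_1+n+1$ of them non-zero; both arguments yield the same polynomial bound and the remainder of the proof is identical.
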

\begin{proof}
Recall that $\LukProbsquare$ and $\fourLukProb$ can be linearly embedded into one another (Theorems~\ref{theorem:embeddings1} and~\ref{theorem:embeddings2} and Lemma~\ref{lemma:lineartranslationprobability}). Thus, it remains to provide a~non-deterministic polynomial algorithm that decides whether a formula is falsifiable (i.e., \emph{non-valid}) for one of these logics. We choose $\LukProbsquare$ since it has only one modality.

Let $\alpha\in\LLukProbsquare$ be over modal atoms $\Prob\phi_1$, \ldots, $\Prob\phi_n$. By Lemma~\ref{lemma:LukProbsquarenegremoval}, we can w.l.o.g.\ assume that $\alpha$ does not contain $\neg$. We show how to construct a~falsifying $\LukProbsquare$-valuation for $\alpha$.

First, we replace every modal atom $\Prob\phi_i$ with a~fresh variable $q_{\phi_i}$. Denote the new formula $\alpha^-$. It is clear that $\lmc(\alpha^-)=\Omc(\lmc(\alpha))$. We construct a~$\LukProbsquare$ tableau beginning with $\{\alpha^-\leqslant_1c,c<1\}$. As $\alpha^-$ does not contain $\neg$, every branch gives us an a~system of linear inequalities that has a~solution iff $\alpha^-$ is $\Luksquareorder$-falsifiable: $\alpha^-$ is $\Luksquareorder$-falsifiable iff at least one system of inequalities corresponding to a~tableau branch has a~solution. Clearly, if $\alpha^-$ is not $\Luksquareorder$-falsifiable, it is not $\LukProbsquare$-falsifiable either.

Now let $\mathfrak{Br}=\{\Bmc_1,\ldots,\Bmc_w\}$ be the set of all open branches in the $\TLuksquareorder$ tableau for~$\alpha^-$. We guess a~$\Bmc\in\mathfrak{Br}$ and consider the following system of (in)equalities:
\begin{align}
\tag{$\mathrm{LI}(1)^\Bmc$}\label{equ:MIP1}
z_1\triangledown t_1,\ldots,z_n\triangledown t_n,k_1\leq k'_1,\ldots,k_r\leq k'_r,m_1\geq1,\ldots,m_s\geq1,m'_1\leq0,\ldots,m'_t\leq0
\end{align}
Here, $z_i$'s correspond to the values of $q_{\phi_i}$'s in $\alpha^-$, and $t_i$'s are linear polynomials that label~$q_{\phi_i}$'s. Numerical constraints give us $k$'s, $k'$'s, $m$'s and $m'$'s. Denote the number of inequalities and the number of variables in~\ref{equ:MIP1} with $l_1$ and $l_2$, respectively. It is clear that $l_1=\Omc(\lmc(\alpha^-))$ and $l_2=\Omc(\lmc(\alpha^-))$.

We need to check whether $z_i$'s are coherent as probabilities of $\phi_i$'s. I.e., that there is a~probability measure $\mu$ on $2^{\Prop(\alpha)}$ s.t.\ $\mu(|\phi_i|^+)=z_i$. Recall again that because of~\cite[Theorem~4]{KleinMajerRad2021}, we can assume that the probabilities are \emph{classical}. Moreover, we can assume that there are $n$ propositional variables in $\alpha$ (we can always add new superfluous variables or modal atoms to $\alpha$ to make their numbers equal).

Now, introduce $2^n$ variables $u_v$ indexed by $n$-letter words over $\{0,1\}$. These denote whether the variables of $\phi_i$'s are true under $v^+$ and thus, correspond to subsets of $\Prop(\alpha)$. E.g., if $\Prop(\alpha)=\{p_1,p_2,p_3,p_4\}$, then $u_{1001}$ encodes $\{p_1,p_4\}$. Note that $\neg$ does not occur in $\alpha^-$ and thus we care only about $e_1$ and $v^+$. Furthermore, while $n$ is the number of $\phi_i$'s, we can add extra modal atoms or variables to make it also the number of variables. We let $a_{i,v}=1$ when $\phi_i$ is true under $v^+$ and $a_{i,v}=0$ otherwise. Now add new equalities to $\mathrm{MIP}(1)^{\Bmc}$, namely
\begin{align}
\tag{$\mathrm{LI}(2\exp)^{\Bmc}$}\label{equ:MIP2exp}
\sum_{v}u_v&=1&\sum_{v}(a_{i,v}\cdot u_v)&=z_i
\end{align}
The new problem~--- \ref{equ:MIP1}$\cup$\ref{equ:MIP2exp}~--- has a~solution over $[0,1]$ iff $\alpha$ is $\LukProbsquare$-satisfiable since~\ref{equ:MIP2exp} encodes a~measure on $\Prop(\alpha)$ while the existence of a~solution for~\ref{equ:MIP1} ensures that $\alpha$ is $\Luksquareorder$-satisfiable. Furthermore, although there are $l_2+2^n+n$ variables in \ref{equ:MIP1}$\cup$\ref{equ:MIP2exp}, it has no more than $l_1+n+1$ (in)equalities. Thus by~\cite[Lemma~2.5]{FaginHalpernMegiddo1990}, it has a~solution with at most $l_1+n+1$ non-zero entries. We guess a~list $L$ of at most $l_1+n+1$ words $v$ (its size is $n\cdot(l_1+n+1)$). We can now compute the values of $a_{i,v}$'s for $i\leq n$ and $v\in L$ and obtain a~new system of inequalities:
\begin{align}
\tag{$\mathrm{LI}(2\mathrm{poly})^\Bmc$}\label{equ:MIP2poly}
\sum_{v\in L}u_v&=1&\sum_{v\in L}(a_{i,v}\cdot u_v)&=z_i
\end{align}
It is clear that \ref{equ:MIP1}$\cup$\ref{equ:MIP2poly} is of polynomial size w.r.t.\ $\lmc(\alpha^-)$ and thus, can be solved in polynomial time. Moreover, \ref{equ:MIP1}$\cup$\ref{equ:MIP2poly} has a~solution iff the values of $\Prob\phi_i$'s occurring on $\Bmc$ are coherent as probabilities. If there is no solution for \ref{equ:MIP1}$\cup$\ref{equ:MIP2poly}, we guess another open branch from the tableau and repeat the procedure. If there is no open branch $\Bmc'\in\mathfrak{Br}$ s.t.\ its corresponding system of inequalities $\mathrm{LI}(1)^{\Bmc'}\cup\mathrm{LI}(2\mathrm{poly})^{\Bmc'}$ has a~solution, then $\alpha$ is $\LukProbsquare$-valid as well by Lemma~\ref{lemma:LukProbsquarenegremoval}.
\end{proof}
\section{Belief and plausibility functions over $\BD$\label{sec:BDbelief}}
In this section, we introduce $\BD$-models with belief and plausibility functions. We adapt the definitions from~\cite{Zhou2013} and~\cite{BilkovaFrittellaKozhemiachenkoMajerNazari2023APAL}.
\begin{definition}[Belief function]\label{def:belief}
A~\emph{belief function} on $W\neq\varnothing$ is a map $\bel:2^W\rightarrow[0,1]$ s.t.
\begin{itemize}
\item $\bel$ is monotone w.r.t.\ $\subseteq$: if $X\subseteq Y$, then $\bel(X)\leq\bel(Y)$;
\item for every $X_1,\ldots,X_k\subseteq W$, it holds that 
\begin{align*}
\bel\left(\bigcup\limits_{1\leq i\leq k}X_i\right)&\geq\sum\limits_{\scriptsize{\begin{matrix}J\subseteq\{1,\ldots,k\}\\J\neq\varnothing\end{matrix}}}(-1)^{|J|+1}\cdot\bel\left(\bigcap\limits_{j\in J}X_j\right)
\end{align*}
\item $\bel(\varnothing)=0$ and $\bel(W)=1$.
\end{itemize}
\end{definition}
\begin{definition}[Plausibility function]\label{def:plausibility}
A~\emph{plausibility function} on $W\!\neq\!\varnothing$ is a map $\pl\!:\!2^W\!\rightarrow\![0,1]$~s.t.
\begin{itemize}
\item $\pl$ is monotone w.r.t.\ $\subseteq$;
\item for every $X_1,\ldots,X_k\subseteq W$, it holds that
\begin{align*}
\pl\left(\bigcap\limits_{1\leq i\leq k}X_i\right)\leq\sum\limits_{\scriptsize{\begin{matrix}J\subseteq\{1,\ldots,k\}\\J\neq\varnothing\end{matrix}}}(-1)^{|J|+1}\cdot\pl\left(\bigcup\limits_{j\in J}X_j\right)
\end{align*}
\item $\pl(\varnothing)=0$ and $\pl(W)=1$.
\end{itemize}
\end{definition}

Recall that every plausibility function $\pl$ on $W$ gives rise to a~belief function $\bel_\pl$ and vice versa: given a belief function $\bel$ on $W$, one can construct a plausibility function $\pl_\bel$.
\begin{align}\label{equ:beliefplausibilityCL}
\pl_\bel(X)&=1-\bel(W\setminus X)&\bel_\pl(X)&=1-\pl(W\setminus X)
\end{align}
Moreover, it was shown in~\cite[Lemmas~2.10 and~2.11]{BilkovaFrittellaKozhemiachenkoMajerNazari2023APAL} that a~similar statement holds for $\BD$-models. I.e., belief and plausibility functions can be defined via one another even without set-theoretic complements.
\begin{align}\label{equ:beliefplausibilityBD}
\pl_\bel(|\phi|^+)&=1-\bel(|\phi|^-)&\bel_\pl(|\phi|^+)&=1-\pl(|\phi|^-)
\end{align}

In what follows, we will be using two kinds of $\BD$-models with belief functions introduced by~\cite{BilkovaFrittellaKozhemiachenkoMajerNazari2023APAL}: in the first one, belief and plausibility will be interdefinable via~\eqref{equ:beliefplausibilityBD}; in the second one, we will assume them to be independent.
\begin{definition}[$\BD$ $\bel$-models]\label{def:BDbeliefmodel}
A $\BD$ $\bel$-model is a tuple $\mathfrak{M}_\bel=\langle\mathfrak{M},\bel\rangle$ with $\mathfrak{M}$ being a~$\BD$-model (recall Definition~\ref{def:BDframesemantics}) and $\bel$ a belief function on $W$.
\end{definition}
\begin{definition}[$\BD$ $\bel/\pl$-models]\label{def:BDbeliefplausibilitymodel}
A $\BD$ $\bel/\pl$-model is a tuple $\mathfrak{M}_{\bel/\pl}=\langle\mathfrak{M},\bel,\pl\rangle$ with $\mathfrak{M}$ being a~$\BD$-model (recall Definition~\ref{def:BDframesemantics}), $\bel$ a belief function on $W$, and $\pl$ a plausibility function on~$W$.
\end{definition}

Observe from Definitions~\ref{def:belief} and~\ref{def:plausibility} that the traditional axiomatisation of belief and plausibility functions is infinite. In the classical case, this can be circumvented following~\cite{GodoHajekEesteva2001IJCAI,GodoHajekEsteva2003} if we define belief in $\phi$ as \emph{the probability of $\Box\phi$} (with $\Box$ being the \emph{non-nesting} $\mathbf{S5}$ modality). Then, we can use~\eqref{equ:beliefplausibilityCL} to define the plausibility of $\phi$ as \emph{the probability of $\lozenge\phi$} since $\lozenge\phi\equiv{\sim}\Box{\sim}\phi$ (here, $\equiv$ is the classical equivalence and ${\sim}$ is the classical negation). In the remainder of the section, we show that the same can be done in $\BD$.

Let us first recall the classical result.
\begin{definition}\label{def:CLuncertaintymodel}
A \emph{classical uncertainty model} is a tuple $\mathfrak{M}=\langle W,v,\mu\rangle$ with $W\neq\varnothing$, $\mu:2^W\rightarrow[0,1]$, and $v:\Prop\rightarrow2^W$ extended to the satisfaction relation $\mathfrak{M},w\vDash\phi$ as follows.
\begin{align*}
\mathfrak{M},w\vDash p&\text{ iff }w\in v(p)&\mathfrak{M},w\vDash{\sim}\phi&\text{ iff }\mathfrak{M},w\nvDash\phi&\mathfrak{M},w\vDash\phi\wedge\chi&\text{ iff }\mathfrak{M},w\vDash\phi\text{ and }\mathfrak{M},w\vDash\chi
\end{align*}
If $\mu$ is a belief function, plausibility, probability, etc., we call $\mathfrak{M}$ \emph{belief}, \emph{plausibility}, \emph{probabilistic}, etc.\ model.

The \emph{extension} of a formula $\phi$ is defined as $\|\phi\|=\{w:\mathfrak{M},w\vDash\phi\}$.
\end{definition}
\begin{definition}[Classical probabilistic Kripke model]\label{def:CLprobabilisticKripkemodel}
A \emph{classical probabilistic Kripke model} is a~tuple $\mathfrak{M}=\langle W,R,v,\pi\rangle$ with $W\neq\varnothing$, $R$ being an equivalence relation on $W$, $\pi$ being a classical probability measure on $W$ and $v:\Prop\rightarrow2^W$ extended to the satisfaction relation $\mathfrak{M},w\vDash\phi$ as follows.
\begin{align*}
\mathfrak{M},w\vDash p&\text{ iff }w\in v(p)\\
\mathfrak{M},w\vDash{\sim}\phi&\text{ iff }\mathfrak{M},w\nvDash\phi&\mathfrak{M},w\vDash\phi\wedge\chi&\text{ iff }\mathfrak{M},w\vDash\phi\text{ and }\mathfrak{M},w\vDash\chi\\
\mathfrak{M},w\vDash\Box\phi&\text{ iff }\forall w':wRw'\Rightarrow\mathfrak{M},w'\vDash\phi&\mathfrak{M},w\vDash\lozenge\phi&\text{ iff }\exists w':wRw'~\&~\mathfrak{M},w'\vDash\phi
\end{align*}
\end{definition}

\begin{theorem}[{\cite[Theorem~1]{GodoHajekEesteva2001IJCAI}}]\label{theorem:CLmodalprobability}
Let $\phi$ be a \emph{propositional classical formula} and let $\mathfrak{M}_\bel=\langle W,v,\bel\rangle$ and $\mathfrak{M}=\langle W,v,\pl\rangle$ be, respectively, a~belief and plausibility models, then:
\begin{enumerate}
\item[(i)] there is a classical probabilistic model $\mathfrak{M}_\Box\!=\!\langle W_\Box,\!R_\Box,\!v_\Box,\!\pi_\Box\rangle$ s.t.\ $\pi_\Box(\|\Box\phi\|)\!=\!\bel(\|\phi\|)$;
\item[(ii)] there is a classical probabilistic model $\mathfrak{M}_\lozenge=\langle W_\lozenge,R_\lozenge,v_\lozenge,\pi_\lozenge\rangle$ s.t.\ $\pi_\lozenge(\|\lozenge\phi\|)=\pl(\|\phi\|)$.
\end{enumerate}
\end{theorem}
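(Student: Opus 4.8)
The plan is to reduce the statement to the standard Dempster--Shafer representation of belief functions by \emph{mass functions} (Möbius transforms) and then to read off belief and plausibility as probabilities in an $\mathbf{S5}$ model whose clusters encode focal elements. Throughout I assume $W$ finite, which (together with the representation theorem) is the substantive hypothesis; everything else is bookkeeping.

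First I would invoke the Möbius representation. Since $\bel$ is totally monotone (Definition~\ref{def:belief}), its Möbius transform
\[\mass(X)=\sum_{Y\subseteq X}(-1)^{|X\setminus Y|}\bel(Y)\]
is a nonnegative \emph{mass function}: $\mass:2^W\to[0,1]$ with $\mass(\varnothing)=0$, $\sum_{X\subseteq W}\mass(X)=1$, and $\bel(Y)=\sum_{X\subseteq Y}\mass(X)$ for every $Y\subseteq W$. The nonnegativity of $\mass$ is exactly what the total monotonicity condition buys, and it is the one genuinely nontrivial ingredient; the normalisation follows from $\bel(\varnothing)=0$ and $\bel(W)=1$.

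For (i) I would build $\mathfrak{M}_\Box$ by taking
\[W_\Box=\{(X,w):\varnothing\neq X\subseteq W,\ w\in X\},\]
declaring $(X,w)R_\Box(X',w')$ iff $X=X'$ (so $R_\Box$ is an equivalence relation whose classes are indexed by the nonempty subsets of $W$), setting $v_\Box(p)=\{(X,w):w\in v(p)\}$, and distributing the weight $\mass(X)$ uniformly over the class $X$, i.e.\ $\pi_\Box(\{(X,w)\})=\mass(X)/|X|$; summing gives $\pi_\Box(W_\Box)=\sum_X\mass(X)=1$, so $\pi_\Box$ is a genuine probability measure. Because $\phi$ is \emph{propositional}, its truth at $(X,w)$ depends only on $w$, so one checks $(X,w)\vDash\Box\phi$ iff $X\subseteq\|\phi\|$ in $\mathfrak{M}_\bel$. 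Hence $\|\Box\phi\|_{\mathfrak{M}_\Box}=\{(X,w):X\subseteq\|\phi\|\}$ and
\[\pi_\Box(\|\Box\phi\|)=\sum_{X\subseteq\|\phi\|}\sum_{w\in X}\frac{\mass(X)}{|X|}=\sum_{X\subseteq\|\phi\|}\mass(X)=\bel(\|\phi\|),\]
as required.

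For (ii) I would dualise. The plausibility $\pl$ is the dual (via~\eqref{equ:beliefplausibilityCL}) of the belief function $X\mapsto 1-\pl(W\setminus X)$, so it has a mass function $\mass$ with $\pl(Y)=\sum_{X\cap Y\neq\varnothing}\mass(X)$ (this follows from $\sum_X\mass(X)=1$ and the equivalence $X\subseteq W\setminus Y$ iff $X\cap Y=\varnothing$). Running the identical construction with this $\mass$ and using that $(X,w)\vDash\lozenge\phi$ iff $X\cap\|\phi\|\neq\varnothing$ yields
\[\pi_\lozenge(\|\lozenge\phi\|)=\sum_{X\cap\|\phi\|\neq\varnothing}\mass(X)=\pl(\|\phi\|).\]
The only points needing care are that $\phi$ be non-modal (so its valuation is cluster-independent) and that the mass function be well defined and nonnegative; the latter is precisely the representation theorem and is where the real work sits, the rest being routine verification.
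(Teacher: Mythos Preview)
Your proof is correct. For (i), the paper does not give its own argument: it simply cites the result from Godo--H\'ajek--Esteva, and your mass-function construction (clusters indexed by nonempty $X\subseteq W$, weight $\mass(X)$ spread over the cluster) is exactly the standard one underlying that citation.

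For (ii), there is a small presentational difference. The paper derives (ii) directly from (i): given $\pl$, take its dual $\bel_\pl$, apply (i), and then use $\lozenge\phi\equiv{\sim}\Box{\sim}\phi$ together with additivity of $\pi$ to compute $\pi(\|\lozenge\phi\|)=1-\pi(\|\Box{\sim}\phi\|)=1-\bel_\pl(W\setminus\|\phi\|)=\pl(\|\phi\|)$. You instead re-run the construction and verify the $\lozenge$-case by the focal-element identity $\pl(Y)=\sum_{X\cap Y\neq\varnothing}\mass(X)$. Both routes are correct and rest on the same duality; the paper's is a shade more economical since it avoids restating the model, while yours makes the ``$\lozenge$ picks out clusters meeting $\|\phi\|$'' mechanism explicit.
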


We draw the attention of our readers to the fact that $\Box$ and $\lozenge$ do not have to be $\mathbf{S5}$: it suffices that the underlying modal logic be normal. The original motivation for choosing $\mathbf{S5}$ was that it is locally finite. Note, however, that local finiteness can be achieved in other modal logics as well because $\Box$ and $\lozenge$ do not nest (cf.~\cite[\S6]{FlaminioGodoMarchioni2013} for a~detailed discussion).

Moreover, originally only (i) is proven by~\cite{GodoHajekEesteva2001IJCAI,GodoHajekEsteva2003}. Observe, however, that (ii) can be obtained from~(i) using~\eqref{equ:beliefplausibilityCL} since
\begin{align*}
\pi(\|\lozenge\phi\|)&=\pi(\|{\sim}\Box{\sim}\phi\|)\\
&=1-\pi(\|\Box{\sim}\phi\|)\\
&=1-\bel(\|{\sim}\phi\|)\\
&=1-\bel(W\setminus\|\phi\|)\\
&=\pl_\bel(\|\phi\|)
\end{align*}

Let us now introduce probabilistic models for modal $\BD$ formulas. We borrow the definition of modalities in $\BD$ from~\cite{OdintsovWansing2017}.
\begin{definition}[$\BD$ probabilistic Kripke models]\label{def:BDprobabilisticKripkemodel}
A \emph{$\BD$ probabilistic Kripke model} is a~tuple $\mathfrak{M}=\langle W,R,v^+,v^-,\pi\rangle$ with $W\neq\varnothing$, $R$ being an equivalence relation on $W$, $\pi$ being \emph{a classical probability measure} on $W$ and $v^+,v^-:\Prop\rightarrow2^W$ extended to $\mathfrak{M},w\vDash^+\phi$ and $\mathfrak{M},w\vDash^-\phi$ as in Definition~\ref{def:BDframesemantics} for propositional connectives and as below for modalities.
\begin{align*}
\mathfrak{M},w\vDash^+\Box\phi&\text{ iff }\forall w':wRw'\Rightarrow \mathfrak{M},w'\vDash^+\phi&\mathfrak{M},w\vDash^+\lozenge\phi&\text{ iff }\exists w':wRw'~\&~\mathfrak{M},w'\vDash^+\phi\\
\mathfrak{M},w\vDash^-\Box\phi&\text{ iff }\exists w':wRw'~\&~\mathfrak{M},w'\vDash^-\phi&\mathfrak{M},w\vDash^-\lozenge\phi&\text{ iff }\forall w':wRw'\Rightarrow\mathfrak{M},w'\vDash^-\phi
\end{align*}
Positive and negative extensions of $\phi$ are as in Definition~\ref{def:BDframesemantics}.

We also call the $\langle W,R,v^+,v^-\rangle$ reduct a~\emph{$\BD$ Kripke model}.
\end{definition}
\begin{remark}\label{rem:BDmodalprobabilitytobeliefplausibility}
Since $\vDash^+$ conditions for $\Box$ and $\lozenge$ in the classical logic and in $\BD$ coincide, it is clear (cf.~\cite{Hajek1996} for a~detailed discussion of the classical case) that given a probability $\pi$ on a~Kripke model $\mathfrak{M}$, there are a~belief and plausibility functions $\bel$ and $\pl$ on $\mathfrak{M}$ s.t.\ $\bel(|\phi|^+)=\pi(|\Box\phi|^+)$ and $\pl(|\phi|^+)=\pi(|\lozenge\phi|^+)$ for every $\phi\in\LBD$.
\end{remark}
\begin{theorem}\label{theorem:BDmodalprobability}
Let $\phi_1,\ldots,\phi_n\in\LBD$ and consider a~$\BD$ $\bel$-model $\mathfrak{M}_\bel=\langle\mathfrak{M},\bel\rangle$ and a $\BD$ $\bel/\pl$-model $\mathfrak{M}'_{\bel/\pl}=\langle\mathfrak{M}',\bel',\pl'\rangle$. Then, there exist
\begin{enumerate}
\item a $\BD$ probabilistic Kripke model $\mathfrak{M}_\Box=\langle W_\Box,R_\Box,v^+_\Box,v^-_\Box,\pi_\Box\rangle$ with $\bel(|\phi_i|^+)=\pi_\Box(|\Box\phi_i|^+)$ for each $i\in\{1,\ldots,n\}$;
\item a $\BD$ probabilistic Kripke model $\mathfrak{M}_{\Box,\lozenge}=\langle W_{\Box,\lozenge},R_1,R_2,v^+_{\Box,\lozenge},v^-_{\Box,\lozenge},\pi_{\Box,\lozenge}\rangle$ with $\bel'(|\phi_i|^+)=\pi_{\Box,\lozenge}(|\Box_1\phi_i|^+)$ and $\pl'(|\phi_i|^+)=\pi_{\Box,\lozenge}(|\lozenge_2\phi_i|^+)$ for each $i\in\{1,\ldots,n\}$ (here, $\Box_1$ and $\lozenge_2$ are associated to $R_1$ and $R_2$, respectively).
\end{enumerate}
\end{theorem}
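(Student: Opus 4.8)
The plan is to adapt the classical construction of Theorem~\ref{theorem:CLmodalprobability} (due to~\cite{GodoHajekEesteva2001IJCAI}) to the $\BD$ setting, exploiting the fact noted in Remark~\ref{rem:BDmodalprobabilitytobeliefplausibility} that the $\vDash^+$ conditions for $\Box$ and $\lozenge$ coincide with their classical counterparts. The key algebraic tool underlying the classical proof is the \emph{M\"obius transform}: a belief function $\bel$ on a finite space $W$ arises from a unique \emph{mass function} $\mass:2^W\to[0,1]$ with $\sum_{X\subseteq W}\mass(X)=1$ and $\bel(Y)=\sum_{X\subseteq Y}\mass(X)$. The construction then builds a probabilistic Kripke model whose worlds are (copies of) the focal sets, i.e.\ the sets $X$ with $\mass(X)>0$, with the accessibility relation and probability measure arranged so that $\pi_\Box(|\Box\phi|^+)$ recovers $\bel(|\phi|^+)$.

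For part~(1), I would first reduce to the finite case: since only finitely many formulas $\phi_1,\ldots,\phi_n$ are involved and $\BD$ is tabular, the relevant extensions $|\phi_i|^+$ determine only finitely many distinct subsets of $W$, so $\bel$ may be treated as a belief function on a finite Boolean subalgebra and its mass function $\mass$ is well-defined there. Next I would define $W_\Box$ to consist of the nonempty focal sets of $\mass$ (or pairs encoding them), set $R_\Box$ to be the total relation $W_\Box\times W_\Box$ (an equivalence relation, matching the $\mathbf{S5}$ clause), and define $\pi_\Box(\{X\})=\mass(X)$. The valuations $v_\Box^+,v_\Box^-$ on a focal-set world $X$ should be chosen so that $X\vDash^+\Box\phi_i$ exactly when $X\subseteq|\phi_i|^+$ in the original model; concretely one places inside each focal-set world the appropriate states of $\mathfrak{M}$ and lets $v_\Box^\pm$ agree with $v^\pm$ on them. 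Then $|\Box\phi_i|^+_{\mathfrak{M}_\Box}$ is precisely the set of focal worlds $X$ with $X\subseteq|\phi_i|^+_\mathfrak{M}$, and $\pi_\Box$ of this set is $\sum_{X\subseteq|\phi_i|^+}\mass(X)=\bel(|\phi_i|^+)$, as required. The only genuinely $\BD$-specific point is to verify that the negative clause for $\Box$ in Definition~\ref{def:BDprobabilisticKripkemodel} does not interfere, which it does not since the statement only constrains $|\Box\phi_i|^+$.

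For part~(2), the idea is to handle belief and plausibility with \emph{two separate} accessibility relations $R_1,R_2$ on a common world-set $W_{\Box,\lozenge}$ and a single measure $\pi_{\Box,\lozenge}$. I would repeat the part~(1) construction with $R_1$ (and its focal sets for $\bel'$) to secure $\bel'(|\phi_i|^+)=\pi_{\Box,\lozenge}(|\Box_1\phi_i|^+)$, and carry out the dual construction with $R_2$ for the plausibility function $\pl'$: here one uses the mass function of the belief function $\bel_{\pl'}$ dual to $\pl'$ and arranges, via the $\vDash^+$ clause for $\lozenge$, that $|\lozenge_2\phi_i|^+$ picks out the $R_2$-worlds \emph{intersecting} rather than contained in $|\phi_i|^+$, yielding $\pl'(|\phi_i|^+)=\pi_{\Box,\lozenge}(|\lozenge_2\phi_i|^+)$. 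I expect the main obstacle to be reconciling the two constructions on a \emph{single} probability space: since $\bel'$ and $\pl'$ are assumed \emph{independent} (not related by~\eqref{equ:beliefplausibilityBD}), their mass functions need not coincide, so one cannot simply reuse the worlds of the $R_1$-construction for $R_2$. The natural fix is a product-style world-set $W_{\Box,\lozenge}$ together with a product measure $\pi_{\Box,\lozenge}$, with $R_1$ respecting the first coordinate and $R_2$ the second, so that the marginal computations for $\Box_1$ and $\lozenge_2$ recover $\bel'$ and $\pl'$ respectively; checking that both marginals come out correctly simultaneously, and that the required $\BD$ satisfaction clauses hold coordinatewise, is the delicate bookkeeping step of the whole argument.
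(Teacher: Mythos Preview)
Your approach differs from the paper's. The paper does \emph{not} redo the M\"obius/mass-function construction in the $\BD$ setting; instead it reduces to the classical Theorem~\ref{theorem:CLmodalprobability} as a black box. Concretely, for part~(1) the paper puts each $\phi_i$ into $\NNF$, replaces every negated literal $\neg p$ by a fresh variable $p^*$ to obtain a $\neg$-free formula $\phi_i^*$, reads the $\BD$ $\bel$-model as a \emph{classical} belief model $\langle W,v^\cl,\bel\rangle$ with $v^\cl=v^+$ (so that $\bel(|\phi_i|^+)=\bel(\|\phi_i^*\|)$), applies Theorem~\ref{theorem:CLmodalprobability} verbatim to get a classical probabilistic Kripke model, and finally converts it back into a $\BD$ model by setting $v_\Box^+(p)=v_\Box(p)$ and $v_\Box^-(p)=v_\Box(p^*)$. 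Part~(2) is dismissed as ``similar''. This buys a short proof that never inspects the internals of the Godo--H\'ajek--Esteva construction; your route is more explicit but essentially reproves the classical result inside $\BD$.

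There is also a technical slip in your sketch of~(1): taking $R_\Box$ to be the \emph{total} relation $W_\Box\times W_\Box$ is incompatible with the conclusion you want. If every world sees every other world, then $|\Box\phi_i|^+$ is either all of $W_\Box$ or empty, and $\pi_\Box(|\Box\phi_i|^+)\in\{0,1\}$ regardless of the mass distribution. The standard construction takes worlds to be pairs $(X,w)$ with $X$ a focal set and $w\in X$, with $(X,w)R_\Box(X',w')$ iff $X=X'$, valuations inherited from $w$, and $\pi_\Box$ distributing $\mass(X)$ over the $X$-cluster; only then does $|\Box\phi_i|^+$ collect exactly the clusters with $X\subseteq|\phi_i|^+$. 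Your parenthetical ``(or pairs encoding them)'' hints at this, but the total-relation clause contradicts it and must be fixed. Once repaired, your product construction for~(2) is a reasonable plan, though the paper's reduction-to-classical strategy sidesteps that bookkeeping altogether.
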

\begin{proof}
We prove (1) as (2) can be dealt with similarly. Let w.l.o.g.\ $\phi$ be in $\NNF$ and denote $\phi^*$ the result of replacing every negated variable $\neg p$ occurring in $\phi$ with a fresh variable~$p^*$. Consider $\mathfrak{M}_\bel=\langle W,v^+,v^-,\bel\rangle$ and $\mathfrak{M}^*_\bel$ (cf.~the proof of Lemma~\ref{lemma:LukProbsquarenegremoval}). It is easy to see that $\bel(|\phi_i|^+)=\bel(|\phi^*_i|^+)$ for every~$i$.

Now define a \emph{classical} belief model $\mathfrak{M}^\cl_\bel=\langle W,v^\cl,\bel\rangle$ with $v^\cl(q)=v^+(q)$ for every $q\in\Prop$. Clearly, $\bel(|\phi^*_i|^+)=\bel(\|\phi^*_i\|)$ for all $\phi^*_i$'s. Thus, by Theorem~\ref{theorem:CLmodalprobability}, we have that there is a~classical Kripke probabilistic model $\mathfrak{M}_\Box\!=\!\langle W_\Box,\!R_\Box,\!v_\Box,\!\pi_\Box\rangle$ s.t.\ $\pi_\Box(\|\Box\phi^*_i\|)\!=\!\bel(\|\phi^*_i\|)$ for every~$i$. It remains to construct a suitable $\BD$ probabilistic Kripke model.

We define $\mathfrak{M}^\BD_\Box=\langle W_\Box,R_\Box,v^+_\Box,v^-_\Box,\pi_\Box\rangle$ with $v^+_\Box(p)=v_\Box(p)$ and $v^-_\Box(p)=v_\Box(p^*)$. One can show by induction on $\phi^*_i$'s that $\|\phi^*_i\|=|\phi_i|^+$, and thus $\|\Box\phi^*_i\|=|\Box\phi_i|^+$ for every $i$: indeed, cf.~semantical conditions for $\Box$ in Definitions~\ref{def:CLprobabilisticKripkemodel} and~\ref{def:BDprobabilisticKripkemodel}. Hence, $\pi_\Box(\|\Box\phi^*_i\|)=\pi_\Box(|\Box\phi_i|^+)$, as required.
\end{proof}
We finish the section with a~brief observation.
\begin{remark}\label{rem:whyS5}
Just as in the classical case, $\Box$ and~$\lozenge$ do not need to be $\mathbf{S5}$. We choose $\BD$ version of $\mathbf{S5}$ because of the following property: if $\mathfrak{M}=\langle W,R,v^+,v^-,\pi\rangle$ is a~$\BD$ probabilistic Kripke model and $\psi$ is a~modal formula where all propositional variables are in the scope of a~modality, and $wRw'$, then $\mathfrak{M},w\vDash^+\psi$ iff $\mathfrak{M},w'\vDash^+\psi$ and $\mathfrak{M},w\vDash^-\psi$ iff $\mathfrak{M},w'\vDash^-\psi$.

\end{remark}
\section{Logics for belief and plausibility functions over $\BD$\label{sec:twolayeredbelief}}
In this section, we recall two-layered logics $\BelLuksquareorder$ and $\BelLuksquareNelson$ for reasoning with belief and plausibility functions over $\BD$ that were presented by~\cite{BilkovaFrittellaKozhemiachenkoMajerNazari2023APAL}. We then combine the technique of~\cite{HajekTulipani2001} with the results of~\cite{GodoHajekEesteva2001IJCAI,GodoHajekEsteva2003} and Theorem~\ref{theorem:BDmodalprobability} to obtain the $\conp$-completeness of $\BelLuksquareorder$ and $\BelLuksquareNelson$.
\subsection{Languages and semantics}
\begin{definition}[$\BelLuksquareorder$: language and semantics]\label{def:BelLuksquareorder} The language of $\BelLuksquareorder$ is constructed using the grammar below.
\begin{align*}
\LBelLuksquareorder\ni\alpha&\coloneqq\Bel\phi\mid\neg\alpha\mid\alpha\rightarrow\alpha\mid{\sim}\alpha\mid\triangle\alpha\tag{$\phi\in\LBD$}
\end{align*}

A \emph{$\BelLuksquareorder$-model} is a tuple $\mathbb{M}_\bel=\langle\mathfrak{M},\bel,e_1,e_2\rangle$ with
\begin{itemize}
\item $\langle\mathfrak{M},\bel\rangle$ being a~$\BD$ $\bel$-model (cf.~Definition~\ref{def:BDbeliefmodel});
\item $e_1$ and $e_2$ being $\Luksquareorder$ valuations induced by $\bel$:
\begin{itemize}
\item $e_1(\Bel\phi)=\bel(|\phi|^+)$, $e_2(\Bel\phi)=\bel(|\phi|^-)$;
\item values of complex $\LBelLuksquareorder$-formulas are computed via Definition~\ref{def:Luk2triangle}.
\end{itemize}
\end{itemize}

We say that $\alpha$ is \emph{$\BelLuksquareorder$-valid} iff $e_1(\alpha)=1$ and $e_2(\alpha)=0$ in all models. A set of formulas $\Gamma$ \emph{entails}~$\alpha$ ($\Gamma\models_{\BelLuksquareorder}\alpha$) iff there is no $\BelLuksquareorder$-model s.t.\ $e(\gamma)=(1,0)$ for all $\gamma\in\Gamma$ but $e(\alpha)\neq(1,0)$.
\end{definition}
\begin{remark}\label{rem:noconflation}
Note that we cannot utilise the technique from Lemma~\ref{lemma:LukProbsquareconflation} to reduce $\BelLuksquareorder$-validity to checking whether $e_1(\alpha)=1$ in every model. This is because belief functions are \emph{not additive}. Thus, even though we can construct $(v^*)^+$ and $(v^*)^-$ s.t.~\eqref{equ:conflatedmodels} holds, we cannot use it to infer the following counterpart of~\eqref{equ:conflatedprobability}:
\begin{align}\label{equ:conflatedbelief}
e^*(\Bel\phi)=(1-\bel(|\phi|^-),1-\bel(|\phi|^+))=(1-e_2(\Bel\phi),1-e_1(\Prob\phi))
\end{align}
Indeed, \eqref{equ:conflatedbelief} does not hold in general since it is not necessarily the case that $\bel(W\setminus X)=1-\bel(X)$ for $X\subseteq W$.
\end{remark}

$\BelLuksquareNelson$ uses a~different paraconsistent expansion of Łukasiewicz logic ($\NLuk$ ) that is inspired by Nelson's paraconsistent logic from~\cite{Nelson1949}. The logic was introduced by~\cite{BilkovaFrittellaKozhemiachenko2021TABLEAUX} and further investigated by~\cite{BilkovaFrittellaKozhemiachenkoMajerNazari2023APAL}. We recall its language and semantics below.
\begin{definition}[$\NLuk$: language and semantics]\label{def:NLuk}
The language is constructed via the following grammar.
\[\LNLuk\ni\phi\coloneqq p\mid\Ninvol\phi\mid\neg\phi\mid(\phi\wedge\phi)\mid(\phi\weakrightarrow\phi)\]
The support of truth and support of falsity conditions are given by the following extensions of $v_1,v_2:\Prop\rightarrow[0,1]$ ($\NLuk$ valuations\index{valuation!$\Luk^2$-valuation!$\NLuk$-valuation}) to the complex formulas.
\begin{align*}
v_1(\neg\phi)&=v_2(\phi)&v_2(\neg\phi)&=v_1(\phi)\\
v_1(\Ninvol\phi)&={\sim_\Luk}v_1(\phi)&v_2(\Ninvol\phi)&=v_1(\phi)\\
v_1(\phi\wedge\chi)&=v_1(\phi)\wedge_\Luk v_1(\chi)&v_2(\phi\wedge\chi)&=v_2(\phi)\vee_\Luk v_2(\chi)\\
v_1(\phi\weakrightarrow\chi)&=v_1(\phi)\rightarrow_\Luk v_1(\chi)&v_2(\phi\weakrightarrow\chi)&=v_1(\phi)\odot_\Luk v_2(\chi)
\end{align*}

We say that $\phi$ is \emph{$\NLuk$-valid} iff $v_1(\phi)=1$ for every $v_1$. $\Gamma$ \emph{entails} $\chi$ ($\Gamma\models_{\NLuk}\chi$) iff there is no $v_1$ s.t.\ $v_1(\phi)=1$ for every $\phi\in\Gamma$ and $v_1(\chi)\neq1$.
\end{definition}
\begin{definition}[$\BelLuksquareNelson$: language and semantics]\label{def:BelLuksquareNelson}
The language is given by the grammar below.
\begin{align*}
\LBelLuksquareNelson\ni\alpha&\coloneqq \Bel\phi\mid\Pl\phi\mid\neg\alpha\mid\alpha\wedge\alpha\mid\alpha\weakrightarrow\alpha\mid\Ninvol\alpha\tag{$\phi\in\LBD$}
\end{align*}

A \emph{$\BelLuksquareNelson$-model} is a~tuple $\mathbb{M}_{\bel/\pl}=\langle\mathfrak{M},\bel,\pl,e_1,e_2\rangle$ with
\begin{itemize}
\item $\langle\mathfrak{M},\bel,\pl\rangle$ being a $\BD$ $\bel/\pl$-model (cf.~Definition~\ref{def:BDbeliefplausibilitymodel});
\item $e_1$ and $e_2$ being $\NLuk$ valuations induced by $\bel$ and $\pl$:
\begin{itemize}
\item $e_1(\Bel\phi)=\bel(|\phi|^+)$, $e_2(\Bel\phi)=\pl(|\phi|^-)$, $e_1(\Pl\phi)=\pl(|\phi|^+)$, $e_2(\Pl\phi)=\bel(|\phi|^-)$;
\item values of complex $\LBelLuksquareNelson$-formulas are computed via Definition~\ref{def:NLuk}.
\end{itemize}
\end{itemize}
We say that $\alpha$ is \emph{$\BelLuksquareNelson$-valid} iff $e_1(\alpha)=1$ for every $\BelLuksquareNelson$-model. $\Gamma$ \emph{entails} $\alpha$ ($\Gamma\models_{\BelLuksquareNelson}\alpha$) iff there is no $\BelLuksquareNelson$-model s.t.\ $e_1(\gamma)=1$ for every $\gamma\in\Gamma$ and $e_1(\alpha)\neq1$.
\end{definition}

Note that using~\eqref{equ:beliefplausibilityBD}, we can define a~plausibility operator $\Pl_\Bel$ in $\BelLuksquareorder$ as $\Pl_\Bel\phi\coloneqq{\sim}\Bel\neg\phi$. The main difference between it and $\Pl$ in $\BelLuksquareNelson$ is that the latter is \emph{independent of belief}. We refer readers to~\cite{BilkovaFrittellaKozhemiachenkoMajerNazari2023APAL} for a more detailed discussion of differences between $\BelLuksquareorder$ and $\BelLuksquareNelson$.

Moreover, $\LukProbsquare$ is complete w.r.t.\ models with \emph{classical} probability measures (recall~\cite[Theorems~3--4]{KleinMajerRad2021} and~\cite[Theorem~4.24]{BilkovaFrittellaKozhemiachenkoMajerNazari2023APAL}). Thus, as belief and plausibility functions are generalisations of probabilities, it means that if a~statement about belief functions is valid, then it is valid about probabilities as well. Formally, let $\alpha\in\LBelLuksquareorder$ and let further $\alpha^\Prob$ be the result of the replacement of $\Bel\phi$'s with $\Prob\phi$'s. Then if $\BelLuksquareorder\models\alpha$, it follows that $\LukProbsquare\models\alpha^\Prob$. This means that $\LukProbsquare$ can be thought of as an extension or a~semantical restriction of $\BelLuksquareorder$.

In the previous section, we showed (Theorem~\ref{theorem:BDmodalprobability}) that belief and plausibility in $\BD$ can be represented as probabilities of modal formulas. We are going to introduce two logics --- $\ProbLukordermodal$ and $\ProbLukNelsonmodal$ --- that deal with those and show that $\BelLuksquareorder$ and $\BelLuksquareNelson$ can be embedded into them.

\begin{definition}[$\ProbLukordermodal$: language and semantics]\label{def:ProbLukordermodal}
The language is constructed as follows.
\begin{align*}
\LProbLukordermodal\ni\alpha&\coloneqq\Prob\heartsuit\phi\mid{\sim}\alpha\mid\neg\alpha\mid\triangle\alpha\mid(\alpha\rightarrow\alpha)\tag{$\phi\in\LBD, \heartsuit\in\{\Box,\lozenge\}$}
\end{align*}

A \emph{$\ProbLukordermodal$-model} is a tuple $\mathbb{M}_\pi=\langle\mathfrak{M},\pi,e_1,e_2\rangle$ s.t.
\begin{itemize}
\item $\langle\mathfrak{M},\pi\rangle$ is a~$\BD$ probabilistic Kripke model;
\item $e_1$ and $e_2$ are $\Luksquareorder$ valuations induced by $\pi$:
\begin{itemize}
\item $e_1(\Prob\heartsuit\phi)=\pi(|\heartsuit\phi|^+)$, $e_2(\Prob\heartsuit\phi)=\pi(|\heartsuit\phi|^-)$ with $\heartsuit\in\{\Box,\lozenge\}$;
\item values of complex $\LProbLukordermodal$-formulas are computed via Definition~\ref{def:Luk2triangle}.
\end{itemize}
\end{itemize}

We say that $\alpha$ is \emph{$\ProbLukordermodal$-valid} iff $e_1(\alpha)=1$ and $e_2(\alpha)=0$ in all $\ProbLukordermodal$-models. A set of formulas $\Gamma$ \emph{entails}~$\alpha$ ($\Gamma\models_{\ProbLukordermodal}\alpha$) iff there is no $\ProbLukordermodal$-model s.t.\ $e(\gamma)=(1,0)$ for all $\gamma\in\Gamma$ but $e(\alpha)\neq(1,0)$.
\end{definition}

The next statements can be proven in the same manner as Lemmas~\ref{lemma:LukProbsquareconflation} and~\ref{lemma:LukProbsquarenegremoval}, respectively.
\begin{lemma}\label{lemma:ProbLukordermodalconflation}
Let $\alpha\in\LProbLukordermodal$. Then $\alpha$ is $\ProbLukordermodal$-valid iff $e_1(\alpha)=1$ in all $\ProbLukordermodal$-models.
\end{lemma}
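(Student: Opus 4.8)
The plan is to reproduce the conflation argument of Lemma~\ref{lemma:LukProbsquareconflation}, the only new ingredient being the behaviour of the modalities $\Box$ and $\lozenge$ under conflation. The forward direction is immediate from the definition of $\ProbLukordermodal$-validity, so I focus on the converse: assuming $e_1(\alpha)=1$ in every $\ProbLukordermodal$-model, I must show $e_2(\alpha)=0$ in every model as well. Arguing contrapositively, suppose $\mathbb{M}_\pi=\langle W,R,v^+,v^-,\pi,e_1,e_2\rangle$ is a model with $e_2(\alpha)=y>0$. Exactly as before I build the conflated model $\mathbb{M}^*_\pi$ over the same frame $\langle W,R\rangle$ and the same measure $\pi$ by setting $(v^*)^+=W\setminus v^-$ and $(v^*)^-=W\setminus v^+$; note that $\pi$ is already a classical probability measure by Definition~\ref{def:BDprobabilisticKripkemodel}, so no further \emph{w.l.o.g.}\ step is needed.

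First I would establish the propositional analogue of~\eqref{equ:conflatedmodels}, namely $|\phi|^+_{\mathbb{M}_\pi}=W\setminus|\phi|^-_{\mathbb{M}^*_\pi}$ and $|\phi|^-_{\mathbb{M}_\pi}=W\setminus|\phi|^+_{\mathbb{M}^*_\pi}$, by the same induction on $\phi\in\LBD$. The genuinely new step is to push these identities through one layer of $\Box$ or $\lozenge$. Using the clauses of Definition~\ref{def:BDprobabilisticKripkemodel}, for $\Box$ I compute
\begin{align*}
w\in|\Box\phi|^+_{\mathbb{M}_\pi}
&\Longleftrightarrow\forall w'(wRw'\Rightarrow w'\in|\phi|^+_{\mathbb{M}_\pi})\\
&\Longleftrightarrow\forall w'(wRw'\Rightarrow w'\notin|\phi|^-_{\mathbb{M}^*_\pi})\\
&\Longleftrightarrow w\notin|\Box\phi|^-_{\mathbb{M}^*_\pi},
\end{align*}
where the last equivalence is the $\vDash^-$-clause for $\Box$ (an existential over $R$-successors). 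The computation for $\lozenge$ is symmetric, swapping the existential and universal quantifiers. Since the construction is involutive, the analogous identities with $\mathbb{M}_\pi$ and $\mathbb{M}^*_\pi$ interchanged also hold, so $|\heartsuit\phi|^+_{\mathbb{M}^*_\pi}=W\setminus|\heartsuit\phi|^-_{\mathbb{M}_\pi}$ for each $\heartsuit\in\{\Box,\lozenge\}$, which is precisely the modal-atom counterpart of~\eqref{equ:conflatedmodels}.

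From here the argument is mechanical. Since $\pi$ is a probability measure, the modal-atom base case gives the analogue of~\eqref{equ:conflatedprobability}:
\begin{align*}
e^*(\Prob\heartsuit\phi)=(1-\pi(|\heartsuit\phi|^-_{\mathbb{M}_\pi}),1-\pi(|\heartsuit\phi|^+_{\mathbb{M}_\pi}))=(1-e_2(\Prob\heartsuit\phi),1-e_1(\Prob\heartsuit\phi)).
\end{align*}
Then, because the outer layer of $\ProbLukordermodal$ is interpreted by exactly the $\Luksquareorder$ clauses of Definition~\ref{def:Luk2triangle}, the induction on outer-layer formulas from Lemma~\ref{lemma:LukProbsquareconflation} applies verbatim and yields $e^*(\alpha)=(1-e_2(\alpha),1-e_1(\alpha))$ for all $\alpha\in\LProbLukordermodal$. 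In particular $e^*_1(\alpha)=1-y<1$, contradicting the assumption that $e_1$ is identically $1$; this forces $e_2(\alpha)=0$ in every model and completes the proof. I expect the only real obstacle to be the modal base case displayed above --- verifying that the positive/negative duality of $\Box$ and $\lozenge$ makes the complementation identity survive the modal step --- since every other component is transported unchanged from Lemma~\ref{lemma:LukProbsquareconflation}.
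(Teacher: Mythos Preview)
Your proposal is correct and follows exactly the approach the paper intends: it reproduces the conflation argument of Lemma~\ref{lemma:LukProbsquareconflation} verbatim, adding only the straightforward verification that the complementation identity~\eqref{equ:conflatedmodels} survives one application of $\Box$ or $\lozenge$ via the dual positive/negative clauses of Definition~\ref{def:BDprobabilisticKripkemodel}. The paper itself merely states that the lemma ``can be proven in the same manner as Lemma~\ref{lemma:LukProbsquareconflation}'', so your write-up is in fact more explicit than the original.
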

\begin{lemma}\label{lemma:ProbLukordermodalnegremoval}
Let $\alpha\in\LProbLukordermodal$. Then there is $\alpha^*$ where $\neg$ does not occur at all s.t.\ $\alpha^*$ is $\ProbLukordermodal$-valid iff $\alpha$ is $\ProbLukordermodal$-valid.
\end{lemma}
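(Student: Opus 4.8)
The plan is to mirror the proof of Lemma~\ref{lemma:LukProbsquarenegremoval}, handling the extra modal layer by exploiting the De~Morgan duality of $\Box$ and $\lozenge$ in $\BD$ Kripke models. First I would dispose of the \emph{outer} negations. Applying the analogue of the transformation $\alpha\rightsquigarrow\alpha^\neg$ from Definition~\ref{def:positiveNNFs}, the purely outer-layer rules ($\neg\neg\alpha\rightsquigarrow\alpha$, $\neg{\sim}\alpha\rightsquigarrow{\sim}\neg\alpha$, $\neg(\alpha\rightarrow\alpha')\rightsquigarrow{\sim}(\neg\alpha'\rightarrow\neg\alpha)$, $\neg\triangle\alpha\rightsquigarrow{\sim}\triangle{\sim}\neg\alpha$) are unchanged, while the modal-atom rule becomes $\neg\Prob\Box\phi\rightsquigarrow\Prob\lozenge\neg\phi$ and $\neg\Prob\lozenge\phi\rightsquigarrow\Prob\Box\neg\phi$. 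These last two preserve the value of the formula because, reading off Definition~\ref{def:BDprobabilisticKripkemodel}, $|\neg\Box\phi|^+=|\Box\phi|^-=|\lozenge\neg\phi|^+$ and $|\neg\Box\phi|^-=|\Box\phi|^+=|\lozenge\neg\phi|^-$ (and dually for $\lozenge$), so that $e(\neg\Prob\Box\phi)=e(\Prob\lozenge\neg\phi)$ in every model. The result is an outer-$\neg$-free $\alpha^\neg$ with $e(\alpha)=e(\alpha^\neg)$ everywhere.

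Next I would eliminate the \emph{inner} negations exactly as in Lemma~\ref{lemma:LukProbsquarenegremoval}. In each modal atom $\Prob\heartsuit\phi$ of $\alpha^\neg$ I would drive $\neg$ down to the variables: De~Morgan handles $\wedge,\vee$ and the modal duality above handles $\Box,\lozenge$, yielding a negation normal form in which $\neg$ occurs only on propositional variables. I would then replace each literal $\neg p$ by a fresh variable $p'$, obtaining a $\neg$-free atom $\Prob\heartsuit\phi^*$ and, collecting these, a $\neg$-free $\alpha^*$. As in Remark~\ref{rem:NNF} and Lemma~\ref{lemma:LukProbsquarenegremoval}, this costs only a linear blow-up.

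For the validity equivalence I would invoke Lemma~\ref{lemma:ProbLukordermodalconflation} to reduce both sides to statements about $e_1$ alone, and then run the model construction of Lemma~\ref{lemma:LukProbsquarenegremoval} in the Kripke setting. Given a model $\mathbb{M}=\langle W,R,v^+,v^-,\pi\rangle$ falsifying $\alpha^\neg$ under $e_1$, I would define $\mathbb{M}^*$ over the \emph{same} frame $\langle W,R\rangle$ and the same $\pi$, with ${v^*}^+(p)=v^+(p)$ and ${v^*}^+(p')=v^-(p)$, and prove by induction on modal $\BD$-formulas that $|\heartsuit\phi|^+_\mathbb{M}=|(\heartsuit\phi)^*|^+_{\mathbb{M}^*}$: the variable cases hold by construction, the $\wedge,\vee$ cases are as in Lemma~\ref{lemma:LukProbsquarenegremoval}, and the $\Box,\lozenge$ cases go through because the accessibility relation is untouched and the $\vDash^+$-clauses for $\Box$ and $\lozenge$ quantify only over $\vDash^+$ at $R$-successors. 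Since $\pi$ is fixed and the positive extensions coincide, $e_1(\Prob\heartsuit\phi)=e^*_1(\Prob(\heartsuit\phi)^*)$, and an outer induction on the $\Luksquareorder$ structure gives $e_1(\alpha^\neg)=e^*_1(\alpha^*)$. The converse direction is symmetric, building $\mathbb{M}^\bullet$ with ${v^\bullet}^-(p)=v^+(p')$ and ${v^\bullet}^+(p)=v^+(p)$. I expect the only genuinely new point---hence the main obstacle---to be the modal step of this inner induction, i.e.\ checking that the fresh-variable renaming commutes with the $\BD$ box/diamond semantics; but because $R$ is preserved and the positive modal clauses never mention $\vDash^-$, this reduces to the propositional argument already carried out in Lemma~\ref{lemma:LukProbsquarenegremoval}.
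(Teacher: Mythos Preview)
Your proposal is correct and follows exactly the route the paper intends: the paper merely says this lemma ``can be proven in the same manner as Lemma~\ref{lemma:LukProbsquarenegremoval}'', and your outline supplies precisely that argument, together with the outer-$\neg$ elimination via $e(\neg\Prob\Box\phi)=e(\Prob\lozenge\neg\phi)$ that the paper invokes later in the proof of Theorem~\ref{theorem:BDmodalprobabilityNP}. One harmless over-elaboration: in your step~2 you mention using modal duality to push $\neg$ through $\Box,\lozenge$ inside the atom, but since each modal atom has the form $\Prob\heartsuit\phi$ with $\phi\in\LBD$, the inner formula contains no modalities and De~Morgan on $\wedge,\vee$ already suffices.
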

\begin{definition}[$\ProbLukNelsonmodal$: language and semantics]\label{def:ProbLukNelsonmodal}
The language is generated by the following grammar.
\begin{align*}
\LProbLukNelsonmodal\ni\alpha&\coloneqq\Prob_i\heartsuit_i\phi\mid\neg\alpha\mid\alpha\wedge\alpha\mid\alpha\weakrightarrow\alpha\mid\Ninvol\alpha\tag{$\phi\in\LBD,\heartsuit\in\{\Box,\lozenge\},i\in\{1,2\}$}
\end{align*}

A \emph{$\ProbLukNelsonmodal$-model is a tuple} $\mathbb{M}^{\NLuk}_{\pi_1,\pi_2}=\langle W,R_1,R_2,v^+,v^-,\pi_1,\pi_2,e_1,e_2\rangle$ s.t.
\begin{itemize}
\item $\langle W,R_1,R_2,v^+,v^-,\pi_1,\pi_2\rangle$ is a $\BD$ probabilistic Kripke model with two relations and two measures;
\item $e_1$ and $e_2$ are $\NLuk$ valuations induced by $\pi_1$ and $\pi_2$:
\begin{itemize}
\item $e_1(\Prob_1\heartsuit_1\phi)=\pi_1(|\heartsuit_1\phi|^+)$, $e_2(\Prob_1\heartsuit_1\phi)=\pi_1(|\heartsuit_1\phi|^-)$ with $\heartsuit\in\{\Box,\lozenge\}$;
\item $e_1(\Prob_2\heartsuit_2\phi)=\pi_2(|\heartsuit_2\phi|^+)$, $e_2(\Prob_2\heartsuit_2\phi)=\pi_2(|\heartsuit_2\phi|^-)$ with $\heartsuit\in\{\lozenge,\Box\}$;
\item values of complex $\LProbLukNelsonmodal$-formulas are computed via Definition~\ref{def:NLuk}.
\end{itemize}
\end{itemize}

We say that $\alpha$ is \emph{$\ProbLukNelsonmodal$-valid} iff $e_1(\alpha)=1$ in all $\ProbLukNelsonmodal$-models. A set of formulas $\Gamma$ \emph{entails}~$\alpha$ ($\Gamma\models_{\ProbLukordermodal}\alpha$) iff there is no $\ProbLukNelsonmodal$-model s.t.\ $e(\gamma)=(1,0)$ for all $\gamma\in\Gamma$ but $e(\alpha)\neq(1,0)$.
\end{definition}
\subsection{Embedding of belief logics into probabilistic logics}
Theorem~\ref{theorem:BDmodalprobability} shows that beliefs can be represented as probabilities of modal formulas. In this section, we use this result to prove that we can faithfully embed $\BelLuksquareorder$ and $\BelLuksquareNelson$ into $\ProbLukordermodal$ and~$\ProbLukNelsonmodal$. To simplify the presentation, observe that all $\LBelLuksquareorder$- and $\LBelLuksquareNelson$-formulas can be transformed into an outer-$\neg$-free form since $\Luksquareorder$ and $\NLuk$ permit $\NNF$'s and since the following holds.
\begin{align*}
e(\neg\Bel\phi)&=e(\Bel\neg\phi)\tag{in $\BelLuksquareorder$}\\
e(\neg\Bel\phi)&=e(\Pl\neg\phi)&e(\neg\Pl\phi)&=e(\Bel\neg\phi)\tag{in $\BelLuksquareNelson$}
\end{align*}
\begin{definition}[Embedding of $\BelLuksquareorder$ into $\ProbLukordermodal$]\label{def:BelLuksquareordertoProbLukordermodal}
Let $\alpha\in\LBelLuksquareorder$ be outer-$\neg$-free, we define $\alpha^\boxplus$ and $\alpha^\boxminus$ as follows.
\begin{align*}
(\Bel\phi)^\boxplus&=\Prob(\Box\phi)&(\Bel\phi)^\boxminus&=\Prob(\Box\neg\phi)\\
(\triangle\beta)^\boxplus&=\triangle(\beta^\boxplus)&(\triangle\beta)^\boxminus&={\sim}\triangle{\sim}(\beta^\boxminus)\\
({\sim}\beta)^\boxplus&={\sim}(\beta^\boxplus)&({\sim}\beta)^\boxminus&={\sim}(\beta^\boxminus)\\
(\beta\rightarrow\gamma)^\boxplus&=\beta^\boxplus\rightarrow\gamma^\boxplus&(\beta\rightarrow\gamma)^\boxminus&=\gamma^\boxminus\ominus\beta^\boxminus
\end{align*}
\end{definition}
\begin{definition}[Embedding of $\BelLuksquareNelson$ into $\ProbLukNelsonmodal$]\label{def:BelLuksquareNelsontoProbLukNelsonmodal}
Let $\alpha\in\LBelLuksquareNelson$ be outer-$\neg$-free, we define $\alpha^{\Box,\lozenge}$ as follows.
\begin{align*}
(\Bel\phi)^{\Box,\lozenge}&=\Prob_1(\Box_1\phi)&(\Pl\phi)^{\Box,\lozenge}&=\Prob_2(\lozenge_2\phi)\\
(\Ninvol\beta)^{\Box,\lozenge}&=\Ninvol(\beta^{\Box,\lozenge})&(\beta\circ\gamma)^{\Box,\lozenge}&=\beta^{\Box,\lozenge}\circ\gamma^{\Box,\lozenge}\tag{$\circ\in\{\wedge,\weakrightarrow\}$}
\end{align*}
\end{definition}

We can now prove the following statement.
\begin{theorem}\label{theorem:beliefprobabilitytwolayeredembedding}~
\begin{enumerate}
\item $\alpha\in\LBelLuksquareorder$ is $\BelLuksquareorder$-valid iff (a) $e_1(\alpha^\boxplus)=1$ in every $\ProbLukordermodal$-model and (b) $e_1(\alpha^\boxminus)=0$ in every $\ProbLukordermodal$-model.
\item $\alpha\in\LBelLuksquareNelson$ is $\BelLuksquareNelson$-valid iff $\alpha^{\Box,\lozenge}$ is $\ProbLukNelsonmodal$-valid.
\end{enumerate}
\end{theorem}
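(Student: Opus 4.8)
The plan is to reduce both equivalences to \emph{value-matching identities} between a belief model and the probabilistic model built over the same $\BD$ Kripke frame, and then read off validity in each direction. Throughout I assume, as the excerpt permits, that $\alpha$ is outer-$\neg$-free. For~(1), I would prove by induction on $\alpha$ the two identities $e_1(\alpha^\boxplus)=e_1(\alpha)$ and $e_1(\alpha^\boxminus)=e_2(\alpha)$, where the left-hand sides are computed in the $\ProbLukordermodal$-model and the right-hand sides in the corresponding $\BelLuksquareorder$-model. The point is that \emph{both} translations are evaluated under support of truth $e_1$ in the probabilistic logic: $\alpha^\boxplus$ simulates the support of truth of $\alpha$, while $\alpha^\boxminus$ simulates its support of falsity. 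The base cases rest on Remark~\ref{rem:BDmodalprobabilitytobeliefplausibility} and Theorem~\ref{theorem:BDmodalprobability}(1): for the first, $e_1((\Bel\phi)^\boxplus)=\pi(|\Box\phi|^+)=\bel(|\phi|^+)=e_1(\Bel\phi)$; for the second, using $|\neg\phi|^+=|\phi|^-$, one gets $e_1((\Bel\phi)^\boxminus)=\pi(|\Box\neg\phi|^+)=\bel(|\neg\phi|^+)=\bel(|\phi|^-)=e_2(\Bel\phi)$. In the inductive steps the $\boxplus$-clauses copy the support-of-truth clauses of Definition~\ref{def:Luk2triangle} verbatim, whereas the $\boxminus$-clauses are tailored so that, evaluated under $e_1$, they reproduce the support-of-falsity clauses: $(\beta\rightarrow\gamma)^\boxminus=\gamma^\boxminus\ominus\beta^\boxminus$ mirrors $v_2(\phi\rightarrow\chi)=v_2(\chi)\ominus_\Luk v_2(\phi)$, and $(\triangle\beta)^\boxminus={\sim}\triangle{\sim}(\beta^\boxminus)$ mirrors $v_2(\triangle\phi)={\sim}_\Luk\triangle_\Luk{\sim}_\Luk v_2(\phi)$.

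Once the identities are in place, both directions of~(1) are immediate. For the forward direction, given an arbitrary $\ProbLukordermodal$-model, Remark~\ref{rem:BDmodalprobabilitytobeliefplausibility} supplies a belief function with $\bel(|\phi|^+)=\pi(|\Box\phi|^+)$ for \emph{every} $\phi$, so the identities force $e_1(\alpha^\boxplus)=e_1(\alpha)=1$ and $e_1(\alpha^\boxminus)=e_2(\alpha)=0$, giving~(a) and~(b). For the converse, given any $\BelLuksquareorder$-model, I would apply Theorem~\ref{theorem:BDmodalprobability}(1) to the finite set of $\LBD$-subformulas occurring in $\alpha$ \emph{together with their negations} (so that the negative base case $\bel(|\phi|^-)=\pi(|\Box\neg\phi|^+)$ is available), obtaining a representing probabilistic model; then~(a) and~(b) yield $e_1(\alpha)=1$ and $e_2(\alpha)=0$, i.e.\ $\BelLuksquareorder$-validity.

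For~(2) the argument is lighter, thanks to a structural observation about $\NLuk$: the only clause of Definition~\ref{def:NLuk} feeding a support-of-falsity value into a support-of-truth value is $v_1(\neg\phi)=v_2(\phi)$. Since $\alpha$ is outer-$\neg$-free, $e_1(\alpha)$ therefore depends only on the $e_1$-values of the modal atoms, and it suffices to establish the single identity $e_1(\alpha)=e_1(\alpha^{\Box,\lozenge})$. Its base cases are exactly the positive clauses supplied by Theorem~\ref{theorem:BDmodalprobability}(2), namely $\bel(|\phi|^+)=\pi_1(|\Box_1\phi|^+)$ and $\pl(|\phi|^+)=\pi_2(|\lozenge_2\phi|^+)$ (taking $\pi_1=\pi_2=\pi_{\Box,\lozenge}$ in the direction that invokes the single-measure theorem), while the inductive steps for $\wedge$, $\weakrightarrow$, and $\Ninvol$ simply copy the corresponding support-of-truth clauses. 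The two directions then follow as in~(1), using Remark~\ref{rem:BDmodalprobabilitytobeliefplausibility} one way and Theorem~\ref{theorem:BDmodalprobability}(2) the other.

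The main obstacle I anticipate is the $\boxminus$-identity in~(1): one must check that evaluating the falsity-tracking translation under \emph{support of truth} exactly reconstructs the support-of-falsity semantics of $\Luksquareorder$, and the noncontinuous $\triangle$ clause is where this is least transparent and most error-prone. A minor but essential bookkeeping point is remembering to feed the negations $\neg\phi_i$ to Theorem~\ref{theorem:BDmodalprobability}(1) in the backward direction, without which the negative base case is unavailable.
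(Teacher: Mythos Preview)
Your proposal is correct and follows essentially the same route as the paper: both proofs fix the model correspondence (via Remark~\ref{rem:BDmodalprobabilitytobeliefplausibility} in one direction and Theorem~\ref{theorem:BDmodalprobability} in the other) and then argue by induction on $\alpha$ that the values match. Your presentation is somewhat cleaner in that you isolate the value-matching identities $e_1(\alpha^\boxplus)=e_1(\alpha)$ and $e_1(\alpha^\boxminus)=e_2(\alpha)$ as explicit targets and verify them connective by connective, whereas the paper runs the induction and the model transfer together; your explicit remark that the $\boxminus$-clauses are designed so that \emph{support of truth} of the translation recomputes \emph{support of falsity} of the original, and your observation for~(2) that outer-$\neg$-freeness in $\NLuk$ makes $e_1$ depend only on $e_1$-values of atoms, are both points the paper leaves implicit.
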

\begin{proof}
We begin with (1). Consider the ‘only if’ direction: we assume that either (i) $e_1(\alpha^\boxplus)=x<1$ in some $\ProbLukordermodal$-model or (ii) $e_1(\alpha^\boxminus)=y>0$ in some $\ProbLukordermodal$-model. We prove by induction that in this case, there are some $\BelLuksquareorder$-models where $e'_1(\alpha)=x$ or $e'_2(\alpha)=y$, respectively.

Let $\alpha^\boxplus=\Prob(\Box\phi)$ and $e_1(\Prob(\Box\phi))=x<1$. This means that $\pi(|\Prob(\Box\phi)|^+)=x$. Using Remark~\ref{rem:BDmodalprobabilitytobeliefplausibility}, we obtain that there is a belief function $\bel$ s.t.\ $\bel(|\phi|^+)=x<1$. Thus, $e^\bel_1(\Bel\phi)=x<1$ for the evaluation $e^\bel_1$ induced by $\bel$, as required. The induction steps can be obtained by a simple application of the induction hypothesis since $\ProbLukordermodal$ and $\BelLuksquareorder$ use $\Luksquareorder$ as their outer-layer logic. Thus, (i) is tackled.

For (ii), we proceed similarly. Let $\alpha^{\boxminus}=\Prob(\Box\neg\phi)=y>0$. Again, we obtain that there is a belief function $\bel$ s.t.\ $\bel(|\neg\phi|^+)=x>0$ (i.e., $\bel(|\phi|^-)=y>0$). Hence, $e^\bel_2(\Bel\phi)=x>0$ for the evaluation induced by $\bel$. The cases of propositional connectives can be tackled similarly, so, we only consider $\alpha^\boxminus=\gamma^\boxminus\ominus\beta^\boxminus$. Let $e_1(\gamma^\boxminus\ominus\beta^\boxminus)=y>0$. Thus, $z\!=\!e_1(\gamma^\boxminus)\!>\!e_1(\beta^\boxminus)\!=\!z'$ with $z\!-\!z'\!=\!y$. Applying the induction hypothesis, we have that $e'_2(\gamma)\!=\!z$, $e'_2(\beta)\!=\!z'$, and $z\!-\!z'\!=\!y$. Hence, $e'_2(\beta\rightarrow\gamma)=y>0$.

Let us now deal with the ‘if’ direction. Assume that $\alpha$ is not $\BelLuksquareorder$-valid, i.e., either (i) there is a $\BelLuksquareorder$-model where $e_1(\alpha)=x<1$ or (ii) there is a model where $e_2(\alpha)=y>0$. We prove by induction that there is a~$\ProbLukordermodal$-model s.t.\ $e'_1(\alpha^\boxplus)=x<1$ or $e'_1(\alpha^\boxminus)=y>0$.

First, if $\alpha=\Bel\phi$ and $e_1(\Bel\phi)=x<1$ in some $\BelLuksquareorder$-model, then we have that there is a~$\BD$ $\bel$-model s.t.\ $\bel(|\phi|^+)=x$. Hence, by Theorem~\ref{theorem:BDmodalprobability}, there is a~$\BD$ probabilistic Kripke model s.t.\ $\pi(|\Box\phi|^+)=x$, and thus, $e^\pi_1(\Prob(\Box\phi))=x<1$ for the induced valuation $e^\pi_1$. In the second case, we have $e_2(\Bel\phi)=y>0$ in some $\BelLuksquareorder$-model. Thus, $\bel(|\phi|^-)=y$ which is equivalent to $\bel(|\neg\phi|^+)=x$. Again, applying Theorem~\ref{theorem:BDmodalprobability}, we obtain that there is a probabilistic Kripke model with $\pi(|\Box\neg\phi|^+)=y>0$. Hence, $e^\pi_1(\Prob(\Box\neg\phi))=y>0$, as required. The cases of propositional connectives can be obtained by simple applications of the induction hypothesis as in the ‘only if’ direction.

(2) can be shown similarly. Assume that $e_1(\alpha^{\Box,\lozenge})=x<1$ in some $\ProbLukNelsonmodal$-model. Applying Remark~\ref{rem:BDmodalprobabilitytobeliefplausibility}, one can show by induction that there is a~$\BelLuksquareNelson$-model with $e'_1(\alpha)=x$. Conversely, using Theorem~\ref{theorem:BDmodalprobability}, one obtains that if $\alpha$ \emph{is not $\BelLuksquareNelson$-valid}, then $\alpha^{\Box,\lozenge}$ is not $\ProbLukNelsonmodal$-valid either.
\end{proof}

One can show that both translations are linear.
\begin{lemma}\label{lemma:lineartranslationbelief}~
\begin{enumerate}
\item Let $\alpha\in\LBelLuksquareorder$. Then $\lmc(\alpha^\boxplus)=\Omc(\lmc(\alpha))$ and $\lmc(\alpha^\boxminus)=\Omc(\lmc(\alpha))$.
\item Let $\beta\in\LBelLuksquareNelson$. Then $\lmc(\beta^{\Box,\lozenge})=\Omc(\lmc(\alpha))$.
\end{enumerate}
\end{lemma}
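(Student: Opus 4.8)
The plan is a routine structural induction, mirroring the proof of Lemma~\ref{lemma:lineartranslationprobability}: the only thing to observe is that each clause of the embeddings in Definitions~\ref{def:BelLuksquareordertoProbLukordermodal} and~\ref{def:BelLuksquareNelsontoProbLukNelsonmodal} replaces a single outer connective (or a single modal atom) by a gadget of constant size, so the overall length grows only by an additive constant per symbol of the input. As in the probabilistic case, I would first reduce to outer-$\neg$-free inputs: transforming an arbitrary $\alpha\in\LBelLuksquareorder$ (resp.\ $\beta\in\LBelLuksquareNelson$) into outer-$\neg$-free form via the equivalences recalled before Definition~\ref{def:BelLuksquareordertoProbLukordermodal} (together with the $\NNF$ rules for $\Luksquareorder$ and $\NLuk$) only pushes $\neg$ inward and inserts at most one extra connective per step, hence is linear --- exactly as $\lmc(\alpha^\neg)=\Omc(\lmc(\alpha))$ was argued from Definition~\ref{def:positiveNNFs} and Remark~\ref{rem:NNF}. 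So it suffices to bound $\lmc(\alpha^\boxplus)$, $\lmc(\alpha^\boxminus)$, and $\lmc(\beta^{\Box,\lozenge})$ for outer-$\neg$-free formulas.

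For part (1) I would prove, by a single induction on the outer-$\neg$-free $\alpha$, that both $\lmc(\alpha^\boxplus)=\Omc(\lmc(\alpha))$ and $\lmc(\alpha^\boxminus)=\Omc(\lmc(\alpha))$; note that the two recursions are in fact independent ($\cdot^\boxplus$ never invokes $\cdot^\boxminus$ and vice versa), so one strengthened induction covers both. In the base case $\alpha=\Bel\phi$ we have $\alpha^\boxplus=\Prob(\Box\phi)$ and $\alpha^\boxminus=\Prob(\Box\neg\phi)$, each of length $\lmc(\phi)+c$ for a fixed constant $c$, i.e.\ $\Omc(\lmc(\Bel\phi))$. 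For the connective clauses, $({\sim}\beta)^\boxplus$, $({\sim}\beta)^\boxminus$, $(\triangle\beta)^\boxplus$, and $(\triangle\beta)^\boxminus={\sim}\triangle{\sim}(\beta^\boxminus)$ each prepend a constant number of symbols to a single recursive call, while $(\beta\rightarrow\gamma)^\boxplus=\beta^\boxplus\rightarrow\gamma^\boxplus$ and $(\beta\rightarrow\gamma)^\boxminus=\gamma^\boxminus\ominus\beta^\boxminus$ join two recursive calls with a single binary connective. The only mildly non-obvious point is the $\boxminus$-clause for $\rightarrow$: since $\ominus$ is a shorthand (by Remark~\ref{rem:smalllanguage}, $\gamma^\boxminus\ominus\beta^\boxminus$ abbreviates a fixed-size ${\sim}/{\rightarrow}$ expansion), it still contributes only a constant number of symbols around $\beta^\boxminus$ and $\gamma^\boxminus$; likewise the ${\sim}\triangle{\sim}$ gadget. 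Hence every clause adds a constant, and summing over the linearly many nodes of $\alpha$ gives the bound.

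Part (2) is the same induction on outer-$\neg$-free $\beta\in\LBelLuksquareNelson$ and is even simpler, since Definition~\ref{def:BelLuksquareNelsontoProbLukNelsonmodal} uses no defined connectives: the base cases $(\Bel\phi)^{\Box,\lozenge}=\Prob_1(\Box_1\phi)$ and $(\Pl\phi)^{\Box,\lozenge}=\Prob_2(\lozenge_2\phi)$ add a constant to $\lmc(\phi)$, the clause $(\Ninvol\beta)^{\Box,\lozenge}=\Ninvol(\beta^{\Box,\lozenge})$ prepends a constant, and $(\beta\circ\gamma)^{\Box,\lozenge}=\beta^{\Box,\lozenge}\circ\gamma^{\Box,\lozenge}$ for $\circ\in\{\wedge,\weakrightarrow\}$ is size-preserving on the connective. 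The decisive structural fact in both parts --- as noted at the close of Lemma~\ref{lemma:lineartranslationprobability} --- is that modalities do not nest, so each modal atom is translated exactly once with an additive constant overhead and no multiplicative blow-up can occur. I do not anticipate any genuine obstacle here: the whole content is the bookkeeping of the constant hidden in $\Omc(\cdot)$ and the remark that the abbreviations $\ominus$ and the ${\sim}\triangle{\sim}$-gadget expand to constant size.
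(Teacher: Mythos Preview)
Your proposal is correct and takes essentially the same approach as the paper, which simply states that the proof is analogous to Lemma~\ref{lemma:lineartranslationprobability}. Your expanded bookkeeping (the constant-size overhead of each clause, the handling of the $\ominus$ and ${\sim}\triangle{\sim}$ abbreviations, and the non-nesting of modalities) is exactly what that analogy unpacks to.
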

\begin{proof}
Analogously to Lemma~\ref{lemma:lineartranslationprobability}
\end{proof}
\subsection{Complexity}
Theorem~\ref{theorem:beliefprobabilitytwolayeredembedding} and Lemma~\ref{lemma:lineartranslationbelief} give us a~polynomial reduction from validity in $\BelLuksquareorder$ and $\BelLuksquareNelson$ to $\ProbLukordermodal$ and $\ProbLukNelsonmodal$. It is, thus, sufficient to prove that $\ProbLukordermodal$-validity and $\ProbLukNelsonmodal$-validity are $\conp$-complete. Recall that in the proof of Theorem~\ref{theorem:LukProbnpcompleteness}, we used canonical models of $\BD$ built over the powerset of all literals occurring in a~formula which we encoded via $u_v$ variables. We define canonical models for $\BD$ with $\mathbf{S5}$ modalities.
\begin{definition}[Clusters]\label{def:cluster}
Let $\mathfrak{M}=\langle W,R,v^+,v^-\rangle$ be a~$\BD$ Kripke model. A~\emph{cluster} is any subset $X\subseteq W$ closed under $R$. I.e., if $w\in X$ and $wRw'$, then $w'\in X$.
\end{definition}
\begin{definition}[Bisimilarity]\label{def:bisimilarity}
Two $\BD$ Kripke models $\mathfrak{M}=\langle W,R,v^+,v^-\rangle$ and $\mathfrak{M}'=\langle W',R',v'^+,v'^-\rangle$ are called \emph{bisimilar} (denoted $\mathfrak{M}\simeq\mathfrak{M}'$) if there is a~relation $\Zmc\subseteq W\times W'$ s.t.\ for every $p\in\Prop$, $x\in W$, and $x'\in W$, the following conditions hold:
\begin{enumerate}
\item $\mathfrak{M},x\vDash^+p$ iff $\mathfrak{M}',x'\vDash^+p$ and $\mathfrak{M},x\vDash^-p$ iff $\mathfrak{M}',x'\vDash^-p$ for every $x\in W$ and $x'\in W'$ s.t.\ $x\Zmc x'$;
\item if $wRx$ and $w\Zmc w'$, then there is $x'\in W'$ s.t.\ $w'R'x'$ and $x\Zmc x'$;
\item if $w'R'x'$ and $w\Zmc w'$, then there is $x\in X$ s.t.\ $wRx$ and $x\Zmc x'$.
\end{enumerate}

Bisimilarity of Kripke models with two relations are defined similarly but conditions (2) and (3) have to hold for both relations.
\end{definition}

The following statement is straightforward to establish.
\begin{proposition}\label{prop:bisimilarity}
Let $\mathfrak{M}\simeq\mathfrak{M}'$, $\Zmc$ be the bisimilarity relation, and $w\Zmc w'$. Then for every $\sigma$ over $\{\neg,\wedge,\vee,\Box,\lozenge\}$, it follows that
\begin{align*}
\mathfrak{M},w\vDash^+\sigma&\text{ iff }\mathfrak{M}',w'\vDash^+\sigma&\mathfrak{M},w\vDash^-\sigma&\text{ iff }\mathfrak{M}',w'\vDash^-\sigma
\end{align*}
\end{proposition}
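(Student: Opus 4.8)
The plan is to prove both equivalences \emph{simultaneously} by induction on the structure of $\sigma$. The simultaneity is essential: because the clause for $\neg$ in Definition~\ref{def:BDframesemantics} interchanges $\vDash^+$ and $\vDash^-$, neither equivalence can be established in isolation, so the induction hypothesis must carry the statement ``$\mathfrak{M},w\vDash^+\sigma$ iff $\mathfrak{M}',w'\vDash^+\sigma$, and $\mathfrak{M},w\vDash^-\sigma$ iff $\mathfrak{M}',w'\vDash^-\sigma$, for all $w\Zmc w'$'' as a single conjunction. The base case $\sigma=p\in\Prop$ is exactly condition~(1) of Definition~\ref{def:bisimilarity}, which already supplies both the positive and the negative clause.

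For the Boolean connectives I would simply unfold the compositional truth- and falsity-conditions of Definition~\ref{def:BDframesemantics} and apply the induction hypothesis. For $\sigma=\neg\phi$ the positive statement reduces to the negative statement for $\phi$ (and vice versa), which is available precisely because we are proving both polarities at once. For $\sigma=\phi\wedge\phi'$ and $\sigma=\phi\vee\phi'$ the arguments are routine: $\vDash^+$ of a conjunction is the conjunction of the $\vDash^+$ of the conjuncts, $\vDash^-$ of a conjunction is the disjunction of the $\vDash^-$ of the conjuncts, and dually for disjunction, so in each case the equivalence for $\sigma$ follows directly from the equivalences for the immediate subformulas. Note that, unlike in the proof of Lemma~\ref{lemma:LukProbsquarenegremoval}, no reduction to $\NNF$ is needed here, since carrying both polarities through the induction handles negation directly.

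The heart of the argument, and the only place where the back-and-forth clauses are used, is the modal step; this is the part I expect to require the most care. The subtlety is that for $\Box$ the positive condition is \emph{universal} while the negative condition is \emph{existential} (and conversely for $\lozenge$), so each of the four combinations invokes a different one of conditions~(2) and~(3). Concretely, for $\sigma=\Box\phi$ and $w\Zmc w'$: to show $\mathfrak{M}',w'\vDash^+\Box\phi$ from $\mathfrak{M},w\vDash^+\Box\phi$, take an arbitrary $x'$ with $w'R'x'$, use condition~(3) to obtain $x$ with $wRx$ and $x\Zmc x'$, and then pass $\mathfrak{M},x\vDash^+\phi$ through the induction hypothesis; the converse uses condition~(2). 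For the negative clause $\mathfrak{M},w\vDash^-\Box\phi$ we are given a witness $x$ with $wRx$ and $\mathfrak{M},x\vDash^-\phi$, and condition~(2) produces the matching $x'$ with $w'R'x'$, so that $\mathfrak{M}',x'\vDash^-\phi$ by the induction hypothesis and hence $\mathfrak{M}',w'\vDash^-\Box\phi$. The case $\sigma=\lozenge\phi$ is the mirror image: its existential positive condition uses condition~(2), and its universal negative condition uses condition~(3). Once the correct back-and-forth clause is matched to the quantifier pattern of each semantic condition, every modal case closes by the induction hypothesis, completing the proof.
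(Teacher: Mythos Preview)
Your proposal is correct and is precisely the standard bisimulation-invariance argument; the paper itself omits the proof entirely, merely remarking that the statement is ``straightforward to establish,'' so your write-up simply fills in the expected details.
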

\begin{definition}[Canonical model for $\BD$ with $\mathbf{S5}$ $\Box$]\label{def:S5BDcanonicalmodel}
Let $\Gamma=\{\Box\phi_1,\ldots\Box\phi_m\}$ and $\Lit[\Gamma]=\{l_1,\ldots,l_n\}$. The \emph{canonical model of $\Gamma$} is the disjoint union of all pairwise non-bisimilar clusters on $2^{\Lit[\Gamma]}$, i.e., the following structure:
\begin{align*}
\canonicalBDequiv_\Gamma&=\biguplus\limits_{\Smsf\subseteq2^{\Lit[\Gamma]}}\langle\Smsf,\Smsf\times\Smsf,v^+,v^-\rangle\tag{$\Smsf\neq\varnothing$}
\end{align*}
with valuations $v^+$ and $v^-$ defined as below:
\begin{align*}
w\in v^+(p)&\text{ iff }p\in w&w\in v^-(p)&\text{ iff }\neg p\in w
\end{align*}
\end{definition}

Importantly, as the next statements show, we can prove a~version of the small model property for probabilities of $\Box\phi$ and $\lozenge\phi$ formulas where $\phi\in\LBD$. Namely, we show that in arbitrary models, we only need clusters whose size is linearly bounded by the number of formulas and that we can assume the measure to be positive only on clusters with linearly bounded size in $\canonicalBDequiv$.
\begin{lemma}[Small model property]\label{lemma:SMPmeasurebox}
Let $\mathfrak{M}=\langle W,R,v^+,v^-,\pi\rangle$ be a~$\BD$ probabilistic Kripke model, $\phi_1,\ldots,\phi_m\subseteq\LBD$, $1\leq k\leq m$ and $\Gamma=\{\Box\phi_i\mid i\leq k\}\cup\{\lozenge\phi_i\mid k<i\leq m\}$. Then there is a~model $\mathfrak{M}_\smsf=\langle W_\smsf,R_\smsf,v^+_\smsf,v^-_\smsf,\pi_\smsf\rangle$ s.t.\ $W_\smsf\subseteq W$, $R_\smsf=R_{|W_\smsf}$, $v^+_\smsf$ and $v^-_\smsf$ are restrictions of $v^+$ and $v^-$ on $W_\smsf$, every cluster of $\mathfrak{M}_\smsf$ contains at most $2m+1$ states, and for every $\sigma\in\Gamma$, it holds that
\begin{align*}
\pi(|\sigma|^+)&=\pi_\smsf(|\sigma|^+)&\pi(|\sigma|^-)&=\pi_\smsf(|\sigma|^-)
\end{align*}
\end{lemma}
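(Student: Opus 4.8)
The plan is to exploit the $\mathbf{S5}$ uniformity recorded in Remark~\ref{rem:whyS5}: since $R$ is an equivalence relation and every $\sigma\in\Gamma$ has all its variables inside the scope of a modality, the satisfaction of $\sigma$ (both $\vDash^+$ and $\vDash^-$) is constant across each $R$-cluster. Consequently each extension $|\sigma|^+$ and $|\sigma|^-$ is a union of whole clusters, so that $\pi(|\sigma|^+)=\sum_{C\subseteq|\sigma|^+}\pi(C)$, where the sum ranges over the clusters $C$ of $\mathfrak{M}$. This reduces the task to shrinking each cluster to bounded size while preserving, for every $\sigma\in\Gamma$, whether the cluster lies inside $|\sigma|^+$ (and inside $|\sigma|^-$), and then reassigning the probability mass cluster by cluster.

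First I would fix an arbitrary cluster $C$ of $\mathfrak{M}$ and select a small witness set $C'\subseteq C$. For each $i\le k$ I record (a) if $\Box\phi_i$ fails positively on $C$, i.e.\ some $w\in C$ has $w\nvDash^+\phi_i$, a single such witness; and (b) if $\Box\phi_i$ holds negatively on $C$, i.e.\ some $w\in C$ has $w\vDash^-\phi_i$, a single such witness. Symmetrically, for each $k<i\le m$ I record a witness $w\vDash^+\phi_i$ when $\lozenge\phi_i$ holds positively, and a witness $w\nvDash^-\phi_i$ when $\lozenge\phi_i$ fails negatively. This yields at most $2m$ witnesses; adding one further arbitrary state to keep $C'$ nonempty bounds $|C'|$ by $2m+1$. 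Setting $W_\smsf=\bigcup_C C'$, $R_\smsf=R_{|W_\smsf}$, and $v^+_\smsf,v^-_\smsf$ the restrictions, the clusters of $\mathfrak{M}_\smsf$ are exactly the sets $C'$, each of size at most $2m+1$.

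Next I would check that the truth value of every $\sigma\in\Gamma$ is preserved on each shrunk cluster. Since $\phi_i\in\LBD$ is modality-free, $w\vDash^+\phi_i$ and $w\vDash^-\phi_i$ agree in $\mathfrak{M}$ and $\mathfrak{M}_\smsf$ for every $w\in W_\smsf$. The universal conditions ($\vDash^+\Box\phi_i$ and $\vDash^-\lozenge\phi_i$) are automatically inherited by a subset $C'\subseteq C$, while the existential conditions ($\vDash^-\Box\phi_i$, $\vDash^+\lozenge\phi_i$, and the failures of the universal ones) are exactly the ones for which witnesses were retained. Hence for every $\sigma\in\Gamma$, the cluster $C$ lies in $|\sigma|^+$ (resp.\ in $|\sigma|^-$) iff $C'$ lies in the corresponding extension computed in $\mathfrak{M}_\smsf$.

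Finally I would define the measure: for each cluster pick a representative $w_C\in C'$ and set $\pi_\smsf(\{w_C\})=\pi(C)$, with every other retained state receiving mass $0$. Since the clusters partition $W$, we get $\sum_C\pi(C)=\pi(W)=1$, so $\pi_\smsf$ is a classical probability measure on $W_\smsf$ with $\pi_\smsf(C')=\pi(C)$. Because each extension of $\sigma$ in $\mathfrak{M}_\smsf$ is again a union of clusters $C'$, it follows that $\pi_\smsf(|\sigma|^+)=\sum_{C'\subseteq|\sigma|^+}\pi_\smsf(C')=\sum_{C\subseteq|\sigma|^+}\pi(C)=\pi(|\sigma|^+)$, and likewise for the negative extensions. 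I expect the main obstacle to be the bookkeeping in the witness selection — verifying that two witnesses per formula genuinely suffice to preserve both the positive and negative modal conditions simultaneously, and that passing to a subset never disturbs the universal conditions; the measure step and the reduction to cluster masses are comparatively routine once the uniformity observation is in place.
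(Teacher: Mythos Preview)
Your proposal is correct and follows essentially the same strategy as the paper: reduce to a per-cluster analysis via the $\mathbf{S5}$ uniformity of $\sigma\in\Gamma$, retain a bounded set of witnesses in each cluster (two per formula, one for each of the positive and negative modal conditions that is existential in nature, plus one extra state), and then redistribute the cluster's total mass inside the retained subcluster. The only cosmetic differences are that the paper organises the witness selection via the sets $\Phi^\forall_X$ and $\Phi^\exists_X$ and builds up $X_\smsf$ iteratively starting from a single state, and that the paper redistributes the removed mass onto one retained state rather than concentrating all of $\pi(C)$ on a single representative; neither affects the bound $2m+1$ nor the preservation of $\pi(|\sigma|^\pm)$.
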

\begin{proof}
Observe that $\mathfrak{M}$ is a~disjoint union of all its clusters and that for every $\sigma\in\Gamma$ and every cluster $X\subseteq W$ the following two statements hold:
\begin{align*}
\text{either }\forall x\in X:\mathfrak{M},x,\vDash^+\sigma&\text{ or }\forall x\in X:\mathfrak{M},x,\nvDash^+\sigma;\\
\text{either }\forall x\in X:\mathfrak{M},x,\vDash^-\sigma&\text{ or }\forall x\in X:\mathfrak{M},x,\nvDash^-\sigma.
\end{align*}
We show how to remove ‘redundant’ \emph{states} from clusters containing more than $2m+1$ states and redefine the measure accordingly.

Now let $X$ be a~cluster in $W$ with more than $2m+1$ states. Let further, $\Phi^\forall_X,\Phi^\exists_X\subseteq\Gamma$ be such that 
\begin{align*}
\Phi^\forall_X&=\{\Box\phi\mid\forall x\in X:\mathfrak{M},x\vDash^+\Box\phi\}\cup\{\lozenge\phi\mid\forall x\in X:\mathfrak{M},x\vDash^-\lozenge\phi\}
\\
\Phi^\exists_X&=\{\Box\phi\mid\forall x\in X:\mathfrak{M},x\vDash^-\Box\phi\}\cup\{\lozenge\phi\mid\forall x\in X:\mathfrak{M},x\vDash^+\lozenge\phi\}
\end{align*}
It is clear from the semantics of $\BD$ with $\mathbf{S5}$ modalities (recall Definition~\ref{def:BDprobabilisticKripkemodel}) that no matter which and how many states we remove from $X$
\begin{enumerate}
\item all $\Box$-formulas from $\Phi^\forall_X$ will remain true in every state non-removed of~$X$ while all $\Box$-formulas from $\Gamma\setminus\Phi^\exists_X$ will remain non-false in every non-removed state of $X$;
\item dually, all $\lozenge$-formulas from $\Phi^\forall_X$ will remain false in every state non-removed of~$X$ while all $\lozenge$-formulas from $\Gamma\setminus\Phi^\exists_X$ will remain non-true in every non-removed state of $X$.
\end{enumerate}
Let us now show how to construct a~cluster $X_\smsf$ containing at most $2m+1$ states s.t.\ $\Phi^\forall_X=\Phi^\forall_{X_\smsf}$ and $\Phi^\exists_X=\Phi^\exists_{X_\smsf}$.

First, we take any state $x\in X$. Clearly, all $\Box$- and $\lozenge$-formulas from $\Phi^\forall_X$ will remain true (or, respectively, false) in $x$. Likewise, $\Box$-formulas from $\Gamma\setminus\Phi^\exists_X$ will remain non-false in $x$ and $\lozenge$-formulas from $\Gamma\setminus\Phi^\exists_X$ will remain non-true.

Now let $\Box\phi\in\Phi^\exists_X$ be a~formula s.t.\ it was \emph{false} in $X$ but became non-false in $x$. This means that there was some $x_{\Box\phi}\in X$ s.t.\ $\mathfrak{M},x_{\Box\phi}\vDash^-\phi$ but $x_{\Box\phi}\neq x$. We set $X_{\Box\phi}=\{x,x_{\Box\phi}\}$. It is clear that $\Box\phi$ is false in $X_{\Box\phi}$ but all other formulas remained true or non-false because they were true or non-false in $X$ and $X_{\Box\phi}\subseteq X$. Now we take the next formula, say, $\Box\phi'\in\Phi^\exists_X$ that was false in $X$ but became non-false in $x$. If it is still non-false in $X_{\Box\phi}$, it means that there is some $x_{\Box\phi'}\in X\setminus X_{\Box\phi}$ s.t.\ $\mathfrak{M},x_{\Box\phi'}\vDash^-\phi'$. Thus, we set $X_{\Box\phi'}=X_{\Box\phi}\cup\{x_{\Box\phi'}\}$. We repeat this procedure until we tackle all $\Box$-formulas from $\Phi^\exists_X$ that became non-false in $x$. Then we do the same with $\lozenge$-formulas from $\Phi^\exists_X$ that became \emph{non-true} in $x$. Since there are at most $m$ formulas in $\Phi^\exists_X$, we will have added at most $m$ states to $\{x\}$. Denote the resulting cluster $X_-$.

After that, we do the same with $\Box$-formulas from $\Gamma\setminus\Phi^\forall_X$ that became true in $x$ and $\lozenge$-formulas from $\Gamma\setminus\Phi^\forall_X$ that became false in $x$. This will also add at most $m$ states to $X_-$. The resulting cluster which we call $X_\smsf$ will, thus, contain at most $2m+1$ states. Moreover, we have that $\Phi^\forall_X=\Phi^\forall_{X_\smsf}$ and $\Phi^\exists_X=\Phi^\exists_{X_\smsf}$, as required.

It remains to define $\pi_\smsf$ on $X_\smsf$. As $X_\smsf\subseteq X$, let $X_\smsf=X'\cup\{x''\}$ and define
\begin{align*}
\pi_\smsf(\{x'\})&=
\begin{cases}
\pi(\{x'\})&\text{ if }x'\in X'\\
\displaystyle\sum\limits_{y\in X\setminus X_\smsf}\!\!\pi(\{y\})&\text{ if }x'=x''
\end{cases}
\end{align*}
We repeat this procedure with every cluster. It is clear that the obtained model will consist of clusters containing at most $2m+1$ states and that $\pi(|\sigma|^+)=\pi_\smsf(|\sigma|^+)$ and $\pi(|\sigma|^-)=\pi_\smsf(|\sigma|^-)$ for every $\sigma\in\Gamma$, it holds that
\begin{align*}
\pi(|\sigma|^+)&=\pi_\smsf(|\sigma|^+)&\pi(|\sigma|^-)&=\pi_\smsf(|\sigma|^-)
\end{align*}
\end{proof}

The next statement shows that any measure assignment to formulas of the form $\Box\phi$ and $\lozenge\phi$ on a~given $\BD$ probabilistic Kripke model can be transferred to a~measure assignment on $\canonicalBDequiv$.
\begin{theorem}\label{theorem:BDS5canonicalmeasure}~
\begin{enumerate}
\item Let $\phi_1,\ldots,\phi_m\in\LBD$, $\Gamma=\{\Box\phi_1,\ldots\Box\phi_{l},\lozenge\phi_{l+1},\ldots,\lozenge\phi_m\}$, and $\mathfrak{M}=\langle W,R,v^+,v^-,\pi\rangle$ be a~probabilistic Kripke model. Then there is a probability measure $\pi^\Cmsf$ on $\canonicalBDequiv_{\Gamma}$ s.t.
\begin{enumerate}
\item $\pi^\Cmsf(|\sigma|^+)=\pi(|\sigma|^+)$ and $\pi^\Cmsf(|\sigma|^-)=\pi(|\sigma|^-)$ for every $\sigma\in\Gamma$;
\item if $X$ is a~cluster in $\canonicalBDequiv_\Gamma$ s.t.\ $|X|>2m+1$, then $\pi^\Cmsf(X)=0$.
\end{enumerate}
\item Let $\{\phi_i\mid i\leq k\}\cup\{\chi_j\mid j\leq l\}\subseteq\LBD$, $k+l=m$, $k'\leq k$, $l'\leq l$, $\Gamma_1=\{\Box_1\phi_i\mid i\leq k'\}\cup\{\lozenge_1\phi_i\mid k'<i\leq k\}$, $\Gamma_2=\{\Box_2\chi_j\mid j\leq l'\}\cup\{\lozenge_2\chi_j\mid l'<j\leq l\}$, $\Gamma=\Gamma_1\cup\Gamma_2$, and $\mathfrak{M}=\langle W,R_1,R_2,v^+,v^-,\pi_1,\pi_2\rangle$ be a~probabilistic Kripke model. Then there are probability measures $\pi^\Cmsf_1$ and $\pi^\Cmsf_2$ on $\canonicalBDequiv_{\Gamma}$ s.t.
\begin{enumerate}
\item $\pi^\Cmsf_1(|\sigma|^+)=\pi_1(|\sigma|^+)$, $\pi^\Cmsf_1(|\sigma|^-)=\pi_1(|\sigma|^-)$, $\pi^\Cmsf_2(|\tau|^+)=\pi_2(|\tau|^+)$, and $\pi^\Cmsf_2(|\tau|^-)=\pi_2(|\tau|^-)$ for every $\sigma\in\Gamma_1$ and $\tau\in\Gamma_2$;
\item if $X$ is a~cluster in $\canonicalBDequiv_\Gamma$ s.t.\ $|X|>2m+1$, then $\pi^\Cmsf_1(X)=\pi^\Cmsf_2(X)=0$.
\end{enumerate}
\end{enumerate}
\end{theorem}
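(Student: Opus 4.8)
The plan is to prove (1) directly and obtain (2) by running the same construction independently for each relation--measure pair; so I focus on (1). The strategy has three stages: first shrink the given model to one with uniformly small clusters, then identify each small cluster with a canonical cluster via a bisimulation, and finally push the measure forward along that identification. First I would apply Lemma~\ref{lemma:SMPmeasurebox} to $\mathfrak{M}=\langle W,R,v^+,v^-,\pi\rangle$ and $\Gamma$, obtaining a model $\mathfrak{M}_\smsf=\langle W_\smsf,R_\smsf,v^+_\smsf,v^-_\smsf,\pi_\smsf\rangle$ in which every cluster has at most $2m+1$ states and $\pi_\smsf(|\sigma|^+)=\pi(|\sigma|^+)$, $\pi_\smsf(|\sigma|^-)=\pi(|\sigma|^-)$ for every $\sigma\in\Gamma$. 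It then suffices to transfer $\pi_\smsf$ to $\canonicalBDequiv_\Gamma$ preserving all values on $\Gamma$; since the clusters of $\mathfrak{M}_\smsf$ are already small, condition~(b) will fall out for free.

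Next, to each cluster $X$ of $\mathfrak{M}_\smsf$ I would associate the map $t$ sending a state $x\in X$ to its $\Lit[\Gamma]$-type $t(x)=\{l\in\Lit[\Gamma]\mid\mathfrak{M}_\smsf,x\vDash^+l\}\in2^{\Lit[\Gamma]}$, and set $\Smsf_X=\{t(x)\mid x\in X\}$, a nonempty subset of $2^{\Lit[\Gamma]}$ and hence a cluster of $\canonicalBDequiv_\Gamma$. The relation $x\,\Zmc\,t(x)$ is a bisimulation between $X$ and $\Smsf_X$: condition~(1) of Definition~\ref{def:bisimilarity} holds because, by the definition of the canonical valuations, $s\in v^+(p)$ iff $p\in s$ and $s\in v^-(p)$ iff $\neg p\in s$, so $x$ and $t(x)$ agree on $v^+,v^-$ for every variable of $\Prop[\Gamma]$; conditions~(2) and~(3) hold because both $R_\smsf$ restricted to $X$ and $\Smsf_X\times\Smsf_X$ are total, so any required witness is immediately available (for ``forth'', $t(x')$ works; for ``back'', any $x'$ with $t(x')=s$ works). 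By Proposition~\ref{prop:bisimilarity}, $X$ and $\Smsf_X$ then agree on the positive and negative satisfaction of every $\sigma\in\Gamma$.

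Then I would define $\pi^\Cmsf$ on $\canonicalBDequiv_\Gamma$ by pushing $\pi_\smsf$ forward: for each nonempty $\Smsf\subseteq2^{\Lit[\Gamma]}$ put $\pi^\Cmsf(\Smsf)=\sum\{\pi_\smsf(X)\mid X\text{ a cluster of }\mathfrak{M}_\smsf,\ \Smsf_X=\Smsf\}$ (a convergent sum of nonnegative terms bounded by $1$), distribute this weight over the points of $\Smsf$ arbitrarily, and assign $0$ to every canonical cluster not of the form $\Smsf_X$. This is a probability measure, since the clusters of $\mathfrak{M}_\smsf$ partition $W_\smsf$ and $\pi_\smsf(W_\smsf)=1$. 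As noted in the proof of Lemma~\ref{lemma:SMPmeasurebox}, each $\sigma\in\Gamma$ is constant on clusters, so $|\sigma|^+$ is a union of whole clusters; the bisimulation invariance above gives $X\subseteq|\sigma|^+$ iff $\Smsf_X\subseteq|\sigma|^+$, and summing weights yields $\pi^\Cmsf(|\sigma|^+)=\pi_\smsf(|\sigma|^+)=\pi(|\sigma|^+)$, and likewise for $|\sigma|^-$, proving~(a). For~(b), note $|\Smsf_X|\leq|X|\leq2m+1$, so any canonical cluster $\Smsf$ with $|\Smsf|>2m+1$ differs from every $\Smsf_X$ and hence receives $\pi^\Cmsf(\Smsf)=0$.

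The small-model lemma does the heavy lifting, so the remaining work is the bookkeeping of the bisimulation and the pushforward; the one point requiring care is that Proposition~\ref{prop:bisimilarity} is phrased for bisimulations matching \emph{all} variables, whereas $x\mapsto t(x)$ matches only the variables of $\Lit[\Gamma]$. I would dispose of this by observing that the proposition is proved by induction on $\sigma$ and uses only agreement on the variables actually occurring in $\sigma$, all of which lie in $\Prop[\Gamma]$ for $\sigma\in\Gamma$, so the restricted bisimulation already suffices. For part~(2) I would run the same construction twice, once for $(R_1,\pi_1,\Gamma_1)$ and once for $(R_2,\pi_2,\Gamma_2)$, on the common carrier built from $2^{\Lit[\Gamma]}$ with $\Lit[\Gamma]=\Lit[\Gamma_1]\cup\Lit[\Gamma_2]$, obtaining $\pi^\Cmsf_1$ and $\pi^\Cmsf_2$; since $\Gamma_1$ constrains only the $R_1$-modalities and $\Gamma_2$ only the $R_2$-modalities, the two constructions do not interfere, and the bound $2m+1$ with $m=k+l$ dominates the per-relation bounds, so~(b) holds for both measures.
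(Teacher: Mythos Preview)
Your proposal is correct and follows essentially the same approach as the paper: shrink clusters via Lemma~\ref{lemma:SMPmeasurebox}, identify each cluster with a canonical cluster by a bisimulation, and push the measure forward. Your presentation is in fact more explicit than the paper's on two points: you construct the bisimulation concretely via the type map $t$ (the paper merely asserts that every cluster of $\mathfrak{M}$ is bisimilar to a canonical cluster), and you flag and resolve the mismatch between Proposition~\ref{prop:bisimilarity} being stated for all variables while $t$ only matches variables in $\Prop[\Gamma]$. The paper's pushforward is phrased pointwise rather than cluster-wise, but since every $\sigma\in\Gamma$ is cluster-constant this makes no difference.
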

\begin{proof}
We begin with (1). As there are only $m$ formulas, it is clear that $\Lit[\Gamma]$ is finite. We let $|\Lit[\Gamma]|=k$. Thus, $\mathfrak{M}$ contains at most $\displaystyle\sum\limits_{j=1}^{2^k}\binom{2^k}{j}=2^{2^k}-1$ \emph{pairwise non-bisimilar} clusters (recall Definition~\ref{def:cluster} for the notion of cluster). In addition to that, using Lemma~\ref{lemma:SMPmeasurebox}, we can w.l.o.g.\ assume that these clusters contain at most $2m+1$ states. Moreover, $W$ is the disjoint union of \emph{all} clusters, whence, $\displaystyle\sum\limits_{X\text{ is a~cluster}}\!\!\!\!\!\!\!\!\pi(X)\!=\!1$, and furthermore, for every cluster $X\subseteq\mathfrak{M}$, there is a~cluster $X^\Cmsf\subseteq\canonicalBDequiv_\Gamma$ s.t.\ $X\simeq X^\Cmsf$. From here, it is clear that
\begin{align*}
\mathfrak{M},w\vDash^+\sigma&\text{ iff }\canonicalBDequiv_{\Gamma},w^\Cmsf\vDash^+\sigma&\mathfrak{M},w\vDash^-\sigma&\text{ iff }\canonicalBDequiv_{\Gamma},w^\Cmsf\vDash^-\sigma
\end{align*}
holds for every $\sigma$ when $w\Zmc w^\Cmsf$.

Now, recall from Definition~\ref{def:S5BDcanonicalmodel} that $\canonicalBDequiv_{\Gamma}$ is the disjoint union of all pairwise non-bisimilar clusters. Thus, we can define $\pi^\Cmsf$ as follows:
\begin{align*}
\pi^\Cmsf(\{w^\Cmsf\})&=\sum\limits_{\scriptsize{\begin{matrix}w\Zmc w^\Cmsf\\w\!\in\!W\end{matrix}}}\pi(\{w\})
\end{align*}
It is now easy to see that $\pi^\Cmsf(|\sigma|^+)=\pi(|\sigma|^+)$ and $\pi^\Cmsf(|\sigma|^-)=\pi(|\sigma|^-)$ for every $\sigma\in\Gamma$, as required. Moreover, since $\mathfrak{M}$ w.l.o.g.\ contains only clusters with at most $2m+1$ states, no cluster in $\canonicalBDequiv_\Gamma$ with a~greater number of states has a~positive measure assignment under $\pi^\Cmsf$.

The proof of (2) is similar. First, we take the \emph{mono-relational reducts} of $\mathfrak{M}$, i.e., the following two models: $\mathfrak{M}_{R_1}=\langle W,R_1,v^+,v^-,\pi\rangle$ and $\mathfrak{M}_{R_2}=\langle W,R_2,v^+,v^-,\pi\rangle$. Since every formula in $\Gamma$ contains only one modality and thus depends only on $R_1$ or only on $R_2$, it is clear that the following equivalences hold for every $w\in W$, $\sigma\in\Gamma_1$, and $\tau\in\Gamma_2$:
\begin{align*}
\mathfrak{M},w\vDash^+\sigma&\text{ iff }\mathfrak{M}_{R_1},w\vDash^+\sigma&\mathfrak{M},w\vDash^-\sigma&\text{ iff }\mathfrak{M}_{R_1},w\vDash^-\sigma\\
\mathfrak{M},w\vDash^+\tau&\text{ iff }\mathfrak{M}_{R_2},w\vDash^+\tau&\mathfrak{M},w\vDash^-\tau&\text{ iff }\mathfrak{M}_{R_2},w\vDash^-\tau
\end{align*}
Now, it is clear that $\mathfrak{M}_{R_1}$ and $\mathfrak{M}_{R_2}$ are disjoint unions of clusters (w.r.t.\ $R_1$ and $R_2$, respectively). We can again use Lemma~\ref{lemma:SMPmeasurebox} and assume w.l.o.g.\ that clusters w.r.t.\ $R_1$ and $R_2$ contain at most $2m+1$ states. Hence:
\begin{enumerate}
\item[(a)] for every cluster $X\subseteq\mathfrak{M}_{R_1}$, there is a~cluster $X^\Cmsf\subseteq\canonicalBDequiv_\Gamma$ s.t.\ $X\simeq X^\Cmsf$;
\item[(b)] for every cluster $Y\subseteq\mathfrak{M}_{R_2}$, there is a~cluster $Y^\Cmsf\subseteq\canonicalBDequiv_\Gamma$ s.t.\ $Y\simeq Y^\Cmsf$.
\end{enumerate}
We use $\Zmc_1$ and $\Zmc_2$ to denote bisimilarity relations that witness (a) and (b), respectively. Now, we define $\pi^\Cmsf_1$ and $\pi^\Cmsf_2$ as below.
\begin{align*}
\pi^\Cmsf_1(\{w^\Cmsf\})&=\sum\limits_{\scriptsize{\begin{matrix}w\Zmc_1 w^\Cmsf\\w\!\in\!W\end{matrix}}}\pi_1(\{w\})&\pi^\Cmsf_2(\{w^\Cmsf\})&=\sum\limits_{\scriptsize{\begin{matrix}w\Zmc_2 w^\Cmsf\\w\!\in\!W\end{matrix}}}\pi_2(\{w\})
\end{align*}
Again, it is straightforward to check that $\pi^\Cmsf_1(|\sigma|^+)=\pi_1(|\sigma|^+)$, $\pi^\Cmsf_1(|\sigma|^-)=\pi_1(|\sigma|^-)$, $\pi^\Cmsf_2(|\tau|^+)=\pi_2(|\tau|^+)$, and $\pi^\Cmsf_2(|\tau|^-)=\pi_2(|\tau|^-)$ for every $\sigma\in\Gamma_1$ and $\tau\in\Gamma_2$.
\end{proof}

The decision algorithms for $\ProbLukordermodal$ and $\ProbLukNelsonmodal$ are based upon that presented in Theorem~\ref{theorem:LukProbnpcompleteness}. Thus, we need a~tableaux calculus for $\NLuk$. It was given by~\cite{BilkovaFrittellaKozhemiachenko2021TABLEAUX} and we recall it here. All notions~--- open and closed branches, interpretations of entries, and satisfying valuations of branches --- are the same as in $\TLuksquareorder$ (cf.~Definitions~\ref{def:L2triangleconstrainttableaux} and~\ref{def:satisfyingvaluation}), so we only list the rules in the next definition.
\begin{definition}[$\TNLuk$ --- the tableau calculus for $\NLuk$]
The rules are as follows.
\begin{align*}
\neg\!\leqslant_1\!\dfrac{\neg\phi\leqslant_1i}{\phi\leqslant_2i}&&
\neg\!\leqslant_2\!\dfrac{\neg\phi\leqslant_2i}{\phi\leqslant_1i}&&
\neg\!\geqslant_1\!\dfrac{\neg\phi\geqslant_1i}{\phi\geqslant_2i}&&
\neg\!\geqslant_2\!\dfrac{\neg\phi\geqslant_2i}{\phi\geqslant_1i}\\[.5em]
\Ninvol\!\leqslant_1\!\dfrac{\Ninvol\phi\leqslant_1i}{\phi\geqslant_11-i}
&&
\Ninvol\!\leqslant_2\!\dfrac{\Ninvol\phi\leqslant_2i}{\phi\leqslant_1i}
&&
\Ninvol\!\geqslant_1\!\dfrac{\Ninvol\phi\geqslant_1i}{\phi\leqslant_11-i}
&&
\Ninvol\!\geqslant_2\!\dfrac{\Ninvol\phi\geqslant_2i}{\phi\geqslant_2i}\\[.5em]
\weakrightarrow\leqslant_1\dfrac{\phi_1\weakrightarrow\phi_2\leqslant_1i}{i\geq1\left|\begin{matrix}\phi_1\geqslant_11\!-\!i\!+\!j\\\phi_2\!\leqslant_1\!j\\j\leq i\end{matrix}\right.}
&&
\weakrightarrow\leqslant_2\dfrac{\phi_1\weakrightarrow\phi_2\leqslant_2i}{\begin{matrix}\phi_1\!\leqslant_2\!i\!+\!j\\\phi_2\!\leqslant_1\!1\!-\!j\end{matrix}}
&&
\weakrightarrow\geqslant_1\dfrac{\phi_1\weakrightarrow\phi_2\geqslant_1 i}{\begin{matrix}\phi_1\!\leqslant_1\!1\!-\!i\!+\!j\\\phi_2\!\geqslant_1\!j\end{matrix}}
&&
\weakrightarrow\geqslant_2\dfrac{\phi_1\weakrightarrow\phi_2\geqslant_2i}{i\leq0\left|\begin{matrix}\phi_1\!\geqslant_2\!i\!+\!j\\\phi_2\!\geqslant_1\!1\!-\!j\\j\leq 1-i\end{matrix}\right.}\\[.5em]
\wedge\leqslant_1\dfrac{\phi_1\wedge\phi_2\leqslant_1i}{\phi_1\leqslant_1i\mid\phi_2\leqslant_1i}
&&
\wedge\leqslant_2\dfrac{\phi_1\wedge\phi_2\leqslant_2i}{\begin{matrix}\phi_1\leqslant_2i\\\phi_2\leqslant_2i\end{matrix}}
&&
\wedge\geqslant_1\dfrac{\phi_1\wedge\phi_2\geqslant_1i}{\begin{matrix}\phi_1\geqslant_1i\\\phi_2\geqslant_1i\end{matrix}}
&&
\wedge\geqslant_2\dfrac{\phi_1\wedge\phi_2\geqslant_2i}{\phi_1\geqslant_2i\mid\phi_2\geqslant_2i}
\end{align*}
\end{definition}
\begin{remark}\label{rem:TNLuklinearisation}
Note that all connectives in $\NLuk$ are continuous and thus all rules of $\TNLuk$ can be linearised. This will require the introduction of another sort of variables that range over $\{0,1\}$. To make the presentation of both tableaux calculi uniform, however, we decided against the linear version of the tableau rules.
\end{remark}
\begin{theorem}\label{theorem:BDmodalprobabilityNP}
Validity in $\ProbLukordermodal$ and $\ProbLukNelsonmodal$ is $\conp$-complete.
\end{theorem}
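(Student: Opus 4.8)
The plan is to prove the two bounds separately, adapting the argument of Theorem~\ref{theorem:LukProbnpcompleteness}. For the lower bound, note that $\Luktriangle$ and $\NLuk$ embed into $\ProbLukordermodal$ and $\ProbLukNelsonmodal$ respectively: taking a fresh variable $p$, the atom $\Prob(\Box p)$ (resp.\ $\Prob_1(\Box_1 p)$) can realise any value in $[0,1]$, and distinct atoms built from distinct variables over pairwise disjoint clusters realise independent values, so any $\Luktriangle$- or $\NLuk$-falsifiability instance reduces to a falsifiability instance of the modal logic. Since satisfiability of $\Luktriangle$ and $\NLuk$ is $\np$-hard (Remark~\ref{rem:branchingNP} and~\cite{BilkovaFrittellaKozhemiachenko2021TABLEAUX}), falsifiability of the two logics is $\np$-hard, whence their validity problems are $\conp$-hard. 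It therefore remains to place falsifiability in $\np$ by giving a non-deterministic polynomial refutation procedure.

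For $\ProbLukordermodal$ I would first reduce to the positive fragment: by Lemma~\ref{lemma:ProbLukordermodalconflation} it suffices to refute $e_1(\alpha)=1$, and by Lemma~\ref{lemma:ProbLukordermodalnegremoval} I may assume $\alpha$ is $\neg$-free, so only the labels $\leqslant_1,\geqslant_1$ and only positive extensions of modal atoms are generated. Replacing each $\Prob\heartsuit\phi_i$ ($\heartsuit\in\{\Box,\lozenge\}$) by a fresh variable yields a $\Luktriangle$ formula $\alpha^-$ with $\lmc(\alpha^-)=\Omc(\lmc(\alpha))$; I build the $\TLuksquareorder$ tableau from $\{\alpha^-\leqslant_1 c,\,c<1\}$, guess an open branch $\Bmc$, and read off the linear system $\mathrm{LI}(1)^{\Bmc}$ constraining the values $z_i$ of the atoms, exactly as before. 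The essential new step is the coherence test for the $z_i$ as $\pi(|\heartsuit_i\phi_i|^+)$ in some $\BD$ probabilistic Kripke model. Here the relevant events are not subsets of $\Prop(\alpha)$ but clusters of $\canonicalBDequiv_\Gamma$, of which there are doubly-exponentially many. This is where I invoke Theorem~\ref{theorem:BDS5canonicalmeasure}(1): every coherent assignment is realised by a measure on $\canonicalBDequiv_\Gamma$ supported only on clusters of size at most $2m+1$ (resting on the small model property, Lemma~\ref{lemma:SMPmeasurebox}). Each such cluster is a nonempty set of at most $2m+1$ valuations of $\Lit[\Gamma]$, hence has a description of size $\Omc(\lmc(\alpha)^2)$, and whether $\Box\phi_i$ or $\lozenge\phi_i$ holds positively throughout a given small cluster is decidable in polynomial time. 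Introducing one variable $u_X$ per cluster $X$ together with $\sum_X u_X=1$ and $\sum_X a_{i,X}u_X=z_i$ (where $a_{i,X}\in\{0,1\}$ records positive satisfaction of $\heartsuit_i\phi_i$ on $X$) produces a system of only $\Omc(\lmc(\alpha))$ (in)equalities, so by~\cite[Lemma~2.5]{FaginHalpernMegiddo1990} it has a solution with at most $\Omc(\lmc(\alpha))$ non-zero $u_X$. I thus guess a polynomial-length list $L$ of small clusters, compute the $a_{i,X}$ for $X\in L$, and solve $\mathrm{LI}(1)^{\Bmc}$ together with the restricted coherence constraints; $\alpha$ is falsifiable iff some branch and some list succeed, giving an $\np$ procedure.

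For $\ProbLukNelsonmodal$ the argument is analogous, using $\TNLuk$ in place of $\TLuksquareorder$ and Theorem~\ref{theorem:BDS5canonicalmeasure}(2) in place of part~(1). Two differences arise: since $\NLuk$-validity is defined directly through $e_1$ there is no conflation step, but the $\neg$- and $\Ninvol$-rules generate index-$2$ labels, so a branch constrains both the positive values $z_i^+$ and the negative values $z_i^-$ of the atoms; accordingly the coherence test must supply measures $\pi^\Cmsf_1,\pi^\Cmsf_2$ on $\canonicalBDequiv_\Gamma$ matching both $|\cdot|^+$ and $|\cdot|^-$, which is exactly what part~(2) provides, and I add the doubled family of constraints $\sum_X a^{+}_{i,X}u^{(t)}_X=z_i^+$ and $\sum_X a^{-}_{i,X}u^{(t)}_X=z_i^-$ for $t\in\{1,2\}$. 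Second, the two modalities are read with respect to $R_1$ and $R_2$, so I guess two lists of small clusters, one per measure; the number of constraints stays linear in $\lmc(\alpha)$, so the linear-algebra bound applies unchanged. The main obstacle is precisely this passage from subsets of variables to clusters: a priori $\canonicalBDequiv_\Gamma$ has doubly-exponentially many clusters, any of which could be exponentially large, so without a size bound the guessed events would be neither polynomially representable nor polynomially evaluable. The small model property of Lemma~\ref{lemma:SMPmeasurebox} and its transfer to $\canonicalBDequiv_\Gamma$ in Theorem~\ref{theorem:BDS5canonicalmeasure} are exactly what keep the events small enough to be guessed and checked in polynomial time, and the remainder is a routine adaptation of Theorem~\ref{theorem:LukProbnpcompleteness}.
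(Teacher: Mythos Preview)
Your proposal is correct and follows essentially the same route as the paper: tableau decomposition of the outer layer, coherence checking via Theorem~\ref{theorem:BDS5canonicalmeasure} and the small-cluster bound of Lemma~\ref{lemma:SMPmeasurebox}, and the sparse-solution lemma of~\cite{FaginHalpernMegiddo1990} to guess a polynomial list of clusters. The only presentational difference is in the $\ProbLukNelsonmodal$ case: the paper first pushes $\neg$ inside (using $e(\neg\Prob_i\Box_i\phi)=e(\Prob_i\lozenge_i\neg\phi)$) so that only index-$1$ labels and positive extensions arise, whereas you keep $\neg$ and track $z_i^+,z_i^-$ directly; both variants work.
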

\begin{proof}
As $\Luksquareorder$ and $\NLuk$ are $\conp$-complete~\cite{BilkovaFrittellaKozhemiachenko2021TABLEAUX}, we only show the upper bound. The proof is similar to that of Theorem~\ref{theorem:LukProbnpcompleteness}, so we address the main differences.

We begin with $\ProbLukordermodal$. We show how to check the falsifiability of $\alpha\in\LProbLukordermodal$ in nondeterministic polynomial time. Assume that $\alpha$ is built over $\Prob\sigma_1$, \ldots, $\Prob\sigma_n$ and that $|\Lit(\alpha)|=n$ (just as in Theorem~\ref{theorem:LukProbnpcompleteness}, we can add superfluous modal atoms) and that $\alpha$ is outer-$\neg$-free (this holds because $e(\neg\Prob\Box\phi)=e(\Prob\lozenge\neg\phi)$ and $e(\neg\Prob\lozenge\phi)=e(\Prob\Box\neg\phi)$ in every model).

We construct a~$\TLuksquareorder$ constraint tableau beginning with $\{\alpha^-\leqslant_1c,c<1\}$ where $\alpha^-$ is the result of replacement of every modal atom $\Prob\sigma_i$ with a~fresh variable $q_{\sigma_i}$ and check in nondeterministic polynomial time whether it is closed. If it has open branches, we guess one (say, $\Bmc$) and consider its corresponding system of linear inequalities.
\begin{align}
\tag{$\mathrm{LI}(1)_\Box^\Bmc$}\label{equ:MIP1Box}
z_1\triangledown t_1,\ldots,z_n\triangledown t_n,k_1\leq k'_1,\ldots,k_r\leq k'_r,m_1\geq1,\ldots,m_s\geq1,m'_1\leq0,\ldots,m'_t\leq0
\end{align}
Here, $z_i$'s correspond to the values of $q_{\sigma_i}$'s in $\alpha^-$, and $t_i$'s are linear polynomials that label~$q_{\sigma_i}$'s. Numerical constraints give us $k$'s, $k'$'s, $m$'s and $m'$'s. Denote the number of inequalities and the number of variables in~\ref{equ:MIP1Box} with $l_1$ and $l_2$, respectively. It is clear that $l_1=\Omc(\lmc(\alpha^-))$ and $l_2=\Omc(\lmc(\alpha^-))$.

To check that $z_i$'s are coherent as probabilities of $\sigma_i$'s, we introduce $2^{2^n}-1$ new variables of the form $u_v$ with $v\in\{0,1,;\}^*$. Here, $v$ encodes the valuation of literals of $\alpha$ in the corresponding cluster of $\canonicalBDequiv_{\alpha}$ (cf.~Definitions~\ref{def:cluster} and~\ref{def:S5BDcanonicalmodel}). For example, if $\Lit(\alpha)=\{p_1,\neg p_1,p_2,\neg p_2\}$, $u_{1000;1100;0101}$ encodes $\{\{p_1\},\{p_1,\neg p_1\},\{\neg p_1,\neg p_2\}\}$. Additionally, we put $a_{i,v}=1$ if $\canonicalBDequiv_{\alpha},w\vDash^+\sigma_i$ for every state $w$ in the cluster corresponding to $v$ and $a_{i,v}=0$, otherwise. We then add the following equalities for every $i$.
\begin{align}
\label{equ:MIP2expBox}\tag{$\mathrm{LI}(2\exp)_\Box^\Bmc$}
\sum\limits_{v}u_v&=1&\sum\limits_{v}(a_{i,v}\cdot u_v)&=z_i
\end{align}

As in Theorem~\ref{theorem:LukProbnpcompleteness}, we guess a~list $L$ of at most $l_1+n+1$ words $v$. By Theorem~\ref{theorem:BDS5canonicalmeasure}, we w.l.o.g.\ assume that only clusters with at most $2n+1$ states have a~positive measure assignment. Thus, $L$ contains only $u_v$'s whose length is not greater than $n\cdot(2n+1)$, whence, the size of $L$ is not greater than $n\cdot(2n+1)\cdot(l_1+n+1)$. We compute the values of $a_{i,v}$'s for $i\leq n$ and $v\in L$ which takes deterministic polynomial time w.r.t.\ $\lmc(\alpha^-)$ because clusters contain at most $2n+1$ states. This gives us the following system of inequalities.
\begin{align}
\label{equ:MIP2polyBox}\tag{$\mathrm{LI}(2\mathrm{poly})_\Box^\Bmc$}
\sum\limits_{v\in L}u_v&=1&\sum\limits_{v\in L}(a_{i,v}\cdot u_v)&=z_i
\end{align}
One can see that the size of \ref{equ:MIP1Box}$\cup$\ref{equ:MIP2polyBox} is polynomial in $\lmc(\alpha^-)$ as there are polynomially many (in)equalities containing polynomially many variables with linearly bounded labels. Hence, it can be solved in nondeterministic polynomial time. The rest of the proof follows that of Theorem~\ref{theorem:LukProbnpcompleteness}.

The $\conp$-membership proof for $\ProbLukNelsonmodal$ is similar, so, we provide only a~sketch. The main differences are as follows. First, there are two modalities --- $\Prob_1$ and $\Prob_2$; this is why, when transforming $\alpha$ into $\alpha^-$, we replace $\Prob_1\sigma_i$'s with $q_{\sigma_{1_i}}$'s and $\Prob_2\sigma_i$'s with $q_{\sigma_{2_i}}$'s. Thus, we introduce two sorts of $z$'s ($z_{1_i}$'s and $z_{2_i}$'s) in the system of inequalities corresponding to an open branch in the $\TNLuk$ tableau for~$\alpha^-$. Note, however, that since $\Box_1$ and $\lozenge_1$ do not occur in the same modal atom as $\Box_2$ or $\lozenge_2$, we can utilise $\canonicalBDequiv_{\alpha^-}$ for both sets of modalities. However (recall Theorem~\ref{theorem:BDS5canonicalmeasure}), we will need two independent measures.

Second, we need two sorts of $u_v$'s~--- $u^1_v$ and $u^2_v$ ($v$'s are defined as in the case of $\ProbLukordermodal$)~--- to differentiate between two measures on $\canonicalBDequiv_{\alpha}$. This means that \eqref{equ:MIP2expBox} will have the following form:
\begin{align*}
\sum\limits_{v}u^1_v&=1&\sum\limits_{v}(a_{i_1,v}\cdot u^1_v)&=z_{i_1}&\sum\limits_{v}u^2_v&=1&\sum\limits_{v}(a_{i_2,v}\cdot u^2_v)&=z_{i_2}
\end{align*}
Thus, we have $l_1+2n+2$ inequalities in total. Hence, we can guess the list $L$ containing $l_1+2n+2$ variables. By Theorem~\ref{theorem:BDS5canonicalmeasure}, we need only $v$'s of length at most $n\cdot(2n+1)$. Therefore, the size of~$L$ is at most $n\cdot(2n+1)\cdot(l_1+2n+2)$ which means that it takes nondeterministic polynomial time to solve the system of inequalities obtained after guessing $L$. The result follows.
\end{proof}
\begin{corollary}\label{cor:LuksquareBeliefNP}
Validity in $\BelLuksquareorder$ and $\BelLuksquareNelson$ is $\conp$-complete.
\end{corollary}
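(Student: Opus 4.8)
The plan is to obtain both bounds by transferring the corresponding results already established for the modal probabilistic logics $\ProbLukordermodal$ and $\ProbLukNelsonmodal$. Membership in $\conp$ is the immediate payoff of Theorem~\ref{theorem:beliefprobabilitytwolayeredembedding} together with the linearity of the translations (Lemma~\ref{lemma:lineartranslationbelief}) and the $\conp$-completeness of the two probabilistic logics (Theorem~\ref{theorem:BDmodalprobabilityNP}); hardness will be obtained by embedding the outer logics $\Luksquareorder$ and $\NLuk$, which are themselves $\conp$-complete~\cite{BilkovaFrittellaKozhemiachenko2021TABLEAUX}. Since the heavy machinery (the small model property of Theorem~\ref{theorem:BDS5canonicalmeasure} and the tableau-based guessing algorithm) is already packaged inside Theorem~\ref{theorem:BDmodalprobabilityNP}, the corollary amounts to assembling these pieces, and no genuinely new difficulty arises.

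For the upper bound I would argue as follows. For $\BelLuksquareNelson$ the situation is cleanest: by Theorem~\ref{theorem:beliefprobabilitytwolayeredembedding}(2), $\alpha$ is $\BelLuksquareNelson$-valid iff $\alpha^{\Box,\lozenge}$ is $\ProbLukNelsonmodal$-valid, and $\lmc(\alpha^{\Box,\lozenge})=\Omc(\lmc(\alpha))$ by Lemma~\ref{lemma:lineartranslationbelief}(2); this is a single polynomial-size query to a $\conp$ oracle, so $\BelLuksquareNelson$-validity lies in $\conp$. For $\BelLuksquareorder$ the biconditional of Theorem~\ref{theorem:beliefprobabilitytwolayeredembedding}(1) splits into two conditions, and I would convert condition (b), namely $e_1(\alpha^\boxminus)=0$ in every model, into the $\ProbLukordermodal$-validity of ${\sim}\alpha^\boxminus$, using that $e_1({\sim}\beta)=1-e_1(\beta)$ and Lemma~\ref{lemma:ProbLukordermodalconflation}. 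Thus $\alpha$ is $\BelLuksquareorder$-valid iff both $\alpha^\boxplus$ and ${\sim}\alpha^\boxminus$ are $\ProbLukordermodal$-valid, so non-validity of $\alpha$ is the disjunction of two $\np$ checks (falsifiability of $\alpha^\boxplus$ or of ${\sim}\alpha^\boxminus$), both of polynomial size by Lemma~\ref{lemma:lineartranslationbelief}(1). As $\np$ is closed under such disjunctions, non-validity is in $\np$ and hence $\BelLuksquareorder$-validity is in $\conp$.

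For the lower bound I would reduce validity of the outer logics. Given $\psi\in\LLuksquareorder$ over variables $p_1,\ldots,p_n$, let $\hat\psi$ be the result of replacing each $p_i$ by the modal atom $\Bel r_i$, where $r_1,\ldots,r_n$ are fresh, pairwise distinct $\BD$-variables. Because the outer layer of $\BelLuksquareorder$ is exactly $\Luksquareorder$, the value of $\hat\psi$ in a model is computed from the pairs $(\bel(|r_i|^+),\bel(|r_i|^-))$ by the $\Luksquareorder$ clauses; and since every probability measure is a belief function, these pairs can be made to equal any prescribed tuple in $[0,1]^{2}$ independently (take $\bel$ to be a product probability measure on a space where the positive and negative extensions of the distinct $r_i$ are mutually independent events). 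Hence the outer-layer valuations realised by $\BelLuksquareorder$-models range over all $\Luksquareorder$ valuations of $p_1,\ldots,p_n$, so $\hat\psi$ is $\BelLuksquareorder$-valid iff $\psi$ is $\Luksquareorder$-valid. This reduction is plainly polynomial, and the analogous substitution of each $\NLuk$-variable by $\Bel r_i$ reduces $\NLuk$-validity (which depends only on the support of truth $\bel(|r_i|^+)$) to $\BelLuksquareNelson$-validity. Since $\Luksquareorder$ and $\NLuk$ are $\conp$-complete, both belief logics are $\conp$-hard, and combined with membership this yields $\conp$-completeness. The only points that need care --- and thus the closest thing to an obstacle --- are the faithfulness of the hardness reduction (i.e.\ the realizability of arbitrary modal-atom valuations by belief models) and the conversion of the two-sided biconditional for $\BelLuksquareorder$ into genuine $\ProbLukordermodal$-validity queries.
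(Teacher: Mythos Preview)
Your proposal is correct and follows the paper's own route. The paper's proof is the one-liner ``Immediately from Lemma~\ref{lemma:ProbLukordermodalconflation} and Theorems~\ref{theorem:beliefprobabilitytwolayeredembedding} and~\ref{theorem:BDmodalprobabilityNP}'', and you have unpacked exactly these ingredients: the linear embeddings of Theorem~\ref{theorem:beliefprobabilitytwolayeredembedding} (with Lemma~\ref{lemma:lineartranslationbelief}) give membership, Lemma~\ref{lemma:ProbLukordermodalconflation} is precisely what turns condition~(b) into the $\ProbLukordermodal$-validity of ${\sim}\alpha^\boxminus$, and hardness via the outer logics $\Luksquareorder$ and $\NLuk$ is the same mechanism invoked (implicitly) in the proof of Theorem~\ref{theorem:BDmodalprobabilityNP}. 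Your explicit realizability argument for the lower bound (that all modal-atom valuations arise from belief models, e.g.\ via product probability measures) is a detail the paper leaves to the reader, but it is correct and adds nothing new methodologically.
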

\begin{proof}
Immediately from Lemma~\ref{lemma:ProbLukordermodalconflation} and Theorems~\ref{theorem:beliefprobabilitytwolayeredembedding} and~\ref{theorem:BDmodalprobabilityNP}.
\end{proof}
\section{Conclusion\label{sec:conclusion}}
In this paper, we considered several two-layered logics for reasoning with probability measures and belief functions defined in the $\BD$-framework. In particular, we established that the logics of $\four$- and $\pm$-probabilities can be faithfully embedded into one another (Theorems~\ref{theorem:embeddings1} and~\ref{theorem:embeddings2}) and constructed a~complete Hilbert axiomatisation for the logic of $\four$-probabilities (Theorem~\ref{theorem:completeness}). We also established $\conp$-completeness for logics of $\pm$- and $\four$-probabilities and $\conp$-completeness for logics of belief and plausibility functions in $\BD$ (Theorem~\ref{theorem:LukProbnpcompleteness} and Corollary~\ref{cor:LuksquareBeliefNP}) utilising the connection between belief assignments of propositional and modal formulas (Theorem~\ref{theorem:BDmodalprobability}) and a~version of the small model property for the canonical model (Theorem~\ref{theorem:BDS5canonicalmeasure}). Still, several important questions remain open.

First, our belief and plausibility functions are close to $\pm$-probabilities because they assign each statement $\phi$ two measures: the measure of its positive extension and the measure of its negative extension. It is thus instructive to define $\four$-valued belief and plausibility functions that will assign measures to $\phi$ according to the pure belief, pure disbelief, conflict and uncertainty extensions (cf.~Convention~\ref{conv:4measures}). This is not a trivial task since belief and plausibility functions are not additive, whence, the measures of these extensions cannot be directly obtained from $\bel(|\phi|^+)$ and $\bel(|\phi|^-)$.

Third, observe that axiomatisations of $\BelLuksquareorder$ and $\BelLuksquareNelson$ proposed by~\cite{BilkovaFrittellaKozhemiachenkoMajerNazari2023APAL} are \emph{infinite}. In the case of belief functions over the classical logic, it is shown by~\cite{GodoHajekEesteva2001IJCAI,GodoHajekEsteva2003} how to produce a~\emph{finite} axiomatisation for the two-layered logics of belief functions in the classical case using the representation of $\bel(\|\phi\|)$ as $\pi(\|\Box\phi\|)$. Recall that a similar property (Theorem~\ref{theorem:BDmodalprobability}) holds for the $\BD$ as well. It makes sense to use it to obtain a finite axiomatisation of belief and plausibility functions over $\BD$.

Finally, \cite{BilkovaFrittellaKozhemiachenkoMajer2023IJAR} proposed two-layered logics for \emph{qualitative reasoning} about different uncertainty measures. In particular, we constructed logics for qualitative reasoning with belief functions and probabilities. These logics utilised the bi-G\"{o}del logic ($\mathsf{biG}$) in the outer layer. Since $\mathsf{biG}$ is also $\np$-complete, the question arises whether we can apply the technique of~\cite{HajekTulipani2001} to prove the $\np$-completeness of two-layered logics for qualitative reasoning about uncertainty.

\begin{acknowledgements}
The first and the last author were supported by the grant \textnumero22-23022L CELIA of Grantová Agentura České Republiky. The second and third authors were supported by the grant ANR-19-CE48-0006 of Agence Nationale de la Recherche. The third author was, in addition, supported by the grant ANR-19-CHIA-0014 of Agence Nationale de la Recherche. The paper is a~part of the MOSAIC project financed by the European Union's Marie Sk\l{}odowska-Curie grant \textnumero101007627.

The authors also wish to thank the editors and the reviewers for their comments that greatly enhanced the paper.
\end{acknowledgements}
\confl~No conflict of interest to declare.
\bibliographystyle{msclike}
\bibliography{references}
\end{document}